\documentclass[11pt]{amsart}
\usepackage{amssymb}
\usepackage{mathrsfs}

\newtheorem{thm}{Theorem}[section]

\newtheorem{lem}[thm]{Lemma}
\newtheorem*{lem*}{Lemma}

\newtheorem{prop}[thm]{Proposition}
\newtheorem{proposition}[thm]{Proposition}

\newtheorem{cor}[thm]{Corollary}

\newtheorem{question}[thm]{Question}

\theoremstyle{remark}
\newtheorem{remark}[thm]{Remark}

\theoremstyle{definition}
\newtheorem{defn}[thm]{Definition}

\newtheorem{example}[thm]{Example}

\newcounter{my_enumerate_counter}

\newcommand\comment[1]{}

\newcommand\Tcal{\mathcal{T}}

\newcommand\Nbb{\mathbb{N}}

\newcommand{\arrow}[1]{\overrightarrow{#1}}

\newcommand{\seq}[1]{\mathtt{#1}}

\renewcommand{\epsilon}{\varepsilon}

\usepackage{mathtools}
\usepackage{tikz-cd}
\usepackage{indentfirst}
\usepackage{microtype}
\usepackage[colorlinks=true,linkcolor=blue,citecolor=magenta]{hyperref}
\usepackage{tikz}
\usetikzlibrary{topaths}
\usetikzlibrary{calc}
\usetikzlibrary{trees}
\usetikzlibrary{automata,positioning}

\makeatletter
\def\@tocline#1#2#3#4#5#6#7{\relax
  \ifnum #1>\c@tocdepth 
  \else
    \par \addpenalty\@secpenalty\addvspace{#2}%
    \begingroup \hyphenpenalty\@M
    \@ifempty{#4}{%
      \@tempdima\csname r@tocindent\number#1\endcsname\relax
    }{%
      \@tempdima#4\relax
    }%
    \parindent\z@ \leftskip#3\relax \advance\leftskip\@tempdima\relax
    \rightskip\@pnumwidth plus4em \parfillskip-\@pnumwidth
    #5\leavevmode\hskip-\@tempdima
      \ifcase #1
       \or\or \hskip 1em \or \hskip 2em \else \hskip 3em \fi%
      #6\nobreak\relax
    \hfill\hbox to\@pnumwidth{\@tocpagenum{#7}}\par
    \nobreak
    \endgroup
  \fi}
\makeatother

\setlength{\textwidth}{16cm} \setlength{\textheight}{23cm}
\topmargin -1cm \evensidemargin -2mm \oddsidemargin -2mm
\setlength{\parskip}{1.5ex} \setlength{\parindent}{0em}
\setlength{\unitlength}{1cm}

\keywords{finiteness properties, amenable, finitely presented, free group, piecewise, projective, 
Thompson's group, torsion free}

\subjclass[2010]{Primary: 43A07; Secondary: 20F05}

\dedicatory{This paper is dedicated to my mentor, teacher and friend, Prof. Dr. Thomas Zaslavsky.} 

\thanks{
This research was supported in part by
NSF grant DMS--1262019 at Cornell University, an EPFL-Marie Curie postdoctoral fellowship and a Swiss national science foundation grant ``Ambizione" project number PZ00P2 174137.
The author would like to thank the free spirit hostel in Saas-Balen and Pension Heino in Saas-Grund for their hospitality where much of the revision was completed.}

\address{Institute of Mathematics, EPFL\\ SB MATHGEOM EGG\\ Station 8, MA B3 514\\ Lausanne, CH-1015\\ Switzerland}

\email{{\tt yl763@cornell.edu}}
 
\begin{document}

\title[A nonamenable type $F_{\infty}$ group]{A nonamenable type $\textup{F}_{\infty}$ group of piecewise projective homeomorphisms.}

\author{Yash Lodha}

\date \today

\begin{abstract}
We prove that the group of homeomorphisms of the circle
introduced by the author with Justin Moore (Groups, Geometry and Dynamics 2015) is of type $F_{\infty}$.
This makes the group the first example of a type $F_{\infty}$ group which is nonamenable and
does not contain nonabelian free subgroups.
To prove our result we provide a certain generalisation of cube complexes, which we refer to as \emph{cluster complexes}.
We also obtain a computable \emph{normal form}, or a canonical unique 
choice of a word for each element of the group. 
\end{abstract}

\maketitle

\tableofcontents

\section{Introduction}

The von Neumann--Day problem asks whether every nonamenable group contains nonabelian
free subgroups (see \cite{VonNeumann} and \cite{Day}).
In 1980, Ol'shanskii solved the problem by 
constructing counterexamples, which are the so called \emph{torsion free Tarski Monsters} \cite{Olsh}.
Soon after, Adyan showed that certain \emph{Burnside groups} are also counterexamples 
\cite{Adyan1,Adyan2}.
Ol'shanskii and Sapir constructed the first finitely presented counterexamples in 2003 
\cite{OlshSap}.
The examples of Ol'shanskii and Sapir emerge from difficult inductive constructions,
and the number of relations in their prescribed presentations are estimated to be $>10^{200}$ (See the discussion on page $287$ in \cite{Sapir}).

In \cite{Monod} Monod discovered a family of counterexamples that are remarkable owing to the fact that they admit elegant descriptions
as groups of homeomorphisms of the real line.
In particular he showed that the group of piecewise 
$\textup{PSL}_2(\mathbf{R})$-projective homeomorphisms of $\mathbb{RP}^{1}$
that fix infinity is nonamenable and does not contain nonabelian free subgroups.
However, Monod's examples are not finitely presentable.

In \cite{LodhaMoore} we constructed a finitely presentable nonamenable subgroup of Monod's
group with $3$ 
generators and $9$ relations. 
This group denoted by $G$ is generated by $a(t) = t+1$ together with the following
two homeomorphisms of $\mathbf{R}$:
\[
b(t)=
\begin{cases}
 t&\text{ if }t\leq 0\\
 \frac{t}{1-t}&\text{ if }0\leq t\leq \frac{1}{2}\\
 3-\frac{1}{t}&\text{ if }\frac{1}{2}\leq t\leq 1\\
 t+1&\text{ if }1\leq t\\
\end{cases}
\qquad
c(t)=
\begin{cases}
 \frac{2t}{1+t}&\text{ if }0\leq t\leq 1\\
t&\text{ otherwise}\\
\end{cases}
\]

The finiteness properties \emph{type} $\mathbf{F}_{n}$ are natural topological generalisations of the properties of finite generation and finite presentability.
A group is said to be of type $\mathbf{F}_{\infty}$ if it is of type $\mathbf{F}_n$ for all $n$, or equivalently if it is the fundamental group of a connected, aspherical CW complex
with finitely many cells in every dimension.
The study of counterexamples to the von Neumann--Day problem has an interesting historical connection with finiteness properties of groups.
In the $1970$s Thompson discovered a remarkable finitely presented group, which is now known as Thompson's group $F$.
The elements $a,b$ from above generate a copy of $F$ acting on the real line by piecewise projective homeomorphisms.
In $1979$ Geoghegan made the following conjectures about $F$ (See \cite{GeogConj}.)
\begin{enumerate}
\item $F$ is of type $\mathbf{F}_{\infty}$.
\item $F$ does not contain nonabelian free subgroups.
\item $F$ is nonamenable.
\item All homotopy groups of $F$ at infinity are trivial.
\end{enumerate}

Conjectures $(1)$ and $(4)$ were proved by Brown and Geoghegan \cite{BrownGeoghegan}. 
Conjecture $(2)$ was proved by Brin and Squier \cite{BrinSq}.
The status of $(3)$ still remains open.
There is considerable interest in Conjecture $(3)$ especially because if $F$ is nonamenable
it would be an elegant counterexample to the von Neumann--Day problem.

The following question is natural,
especially in light of Geoghegan's conjectures about Thompson's group $F$.

\begin{question}
Does there exist a group which is of type $F_{\infty}$, nonamenable and does not contain nonabelian free subgroups?
\end{question}

In this article we establish that our group $G$ is of type $F_{\infty}$, and thus answer the above question in the affirmative.
More particularly, we prove the following.

\begin{thm}\label{main}
 The group $G$ acts on a connected cell complex $X$
by cell permuting homeomorphisms such that the following holds.
\begin{enumerate}
 \item $X$ is contractible.
\item The quotient $X/G$ has finitely many cells in each dimension.
\item The stabilizer of each cell is of type $\mathbf{F}_{\infty}$.
\end{enumerate}
It follows that the group $G$ is of type $\mathbf{F}_{\infty}$.
\end{thm}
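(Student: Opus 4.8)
The strategy is the standard one in the study of finiteness properties of groups acting on complexes (à la Brown, Stein, Bux–Gonzalez): find an action of $G$ on a contractible complex $X$ with good finiteness behaviour, then invoke Brown's criterion. For the group $G = \langle a,b,c\rangle$ the natural candidate for $X$ is a Stein–Farley style complex built from the partial action of $G$ on "expansions" or "basic intervals" in $\mathbb{RP}^1$, analogous to the complex used by Brown and Stein for Thompson-type groups. Concretely, I would first set up a combinatorial model: a poset $P$ of finite configurations (tuples of subintervals obtained by applying the generators' natural subdivision rules, together with the projective pieces appearing in $b$ and $c$), ordered by refinement. The group $G$ acts on $P$, and the interval-valued structure gives $P$ the shape of a "forest-like" poset with a well-defined notion of common refinement, so that $P$ is directed. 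The complex $X$ is then the geometric realization of $P$ (or the associated cube/simplex complex of "intervals" $[x,y]$ in $P$).

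The argument then has three parts matching the three clauses of the theorem. For contractibility (clause 1), I would use the Stein–Farley technique: define a "height" or "length" function on $X$ measuring the complexity of a configuration, show that the descending links of vertices are contractible (or at least highly connected, and in fact contractible here because the relevant poset of "elementary expansions" is a join-semilattice with a maximum after removing the vertex), and conclude via Morse theory / the fact that $X$ is an increasing union of contractible subcomplexes that $X$ is contractible. For cocompactness of the quotient (clause 2), one checks that $G$ acts transitively, up to the action, on cells of each fixed type — this reduces to showing there are finitely many $G$-orbits of configurations of each combinatorial complexity, which follows because a configuration is determined up to the $G$-action by a finite amount of combinatorial data (the underlying "tree pattern" together with a bounded choice of projective transformations from a finite generating set of pieces). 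For the cell stabilizers (clause 3), one identifies the stabilizer of a configuration with a group built out of copies of $G$ itself (or of $F$, or of the stabilizer subgroups $\langle a,b\rangle$-type pieces) acting on the complementary regions; these are finite direct products / extensions of groups already known (inductively, or from \cite{LodhaMoore}) to be finitely presented, and an analogous smaller complex shows they are of type $F_\infty$ — so a bootstrapping/induction on complexity is needed. Finally, Brown's criterion (the action is on a contractible complex, cocompact in each dimension, with type $F_\infty$ stabilizers) yields that $G$ is of type $F_\infty$.

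The main obstacle, I expect, is the analysis of the descending links and the verification that they are contractible. Unlike the classical Thompson groups $F$, $T$, $V$, the generator $c$ (and the projective pieces of $b$) interact with the subdivision combinatorics in a more intricate way: the "expansions" are not simply binary caret additions but involve the specific piecewise-$\mathrm{PSL}_2(\mathbb{R})$ moves, so the poset of elementary expansions at a vertex must be shown to have the right join structure. Getting the Morse function and the local structure exactly right — so that descending links are genuinely contractible and not merely $n$-connected for each $n$ — is the technical heart of the paper and is also what should yield the promised normal form, since a contractible descending-link structure typically corresponds to a confluent rewriting system on words in the generators. A secondary difficulty is the inductive verification that cell stabilizers are $F_\infty$: one must ensure the induction is well-founded, i.e. that stabilizers are built from strictly simpler data, which requires a careful choice of the complexity parameter on configurations.
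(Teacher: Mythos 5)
Your proposal reproduces the correct outer skeleton (build a contractible $G$-complex, check cocompactness in each dimension and type $F_{\infty}$ cell stabilizers, then apply Brown's criterion), but everything that actually carries the weight of the theorem is left as an unverified sketch, and the specific route you sketch is not the one that is known to work for this group. Concretely: you posit a Stein--Farley style poset of ``configurations'' of subintervals coming from the piecewise-$\PSL_2(\Rbb)$ subdivision rules, and you assert it is directed (admits common refinements) and that descending links of a suitable Morse function are contractible. Neither claim is established, and both are genuinely problematic here. The generators $y_s$ do not act by refining a binary subdivision of an interval; their defining relation $y_s = x_s y_{s\seq{0}} y_{s\seq{10}}^{-1} y_{s\seq{11}}$ mixes an $F$-element with three new percolating letters of mixed signs, so the natural ``expansion'' moves do not produce a forest-like join-semilattice of configurations, and no directed poset model of this kind is constructed in your argument (or known). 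This is precisely why the paper abandons that template: it takes the $0$-skeleton to be the coset space $\Omega$ of right cosets of $F$ in $G$, joins $F\lambda_1$ to $F\lambda_2$ exactly when $\lambda_1\lambda_2^{-1}$ lies in $F(\text{special form})F$, fills in ``clusters'' parametrized by sorted lists of independent special forms by subdivided Euclidean cubes, proves simple connectivity by explicit homotopies mirroring the relations, and kills the higher homotopy groups by showing every finite subcomplex sits inside a subcomplex homeomorphic to a nonpositively curved cube complex -- there is no Morse function or descending-link computation anywhere.

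A second concrete gap is your treatment of stabilizers. You propose that cell stabilizers are ``built out of copies of $G$ itself'' and handled by a bootstrapping induction on complexity; as stated this is circular (you would be assuming finiteness properties of $G$-like groups to prove them for $G$), and you give no well-founded complexity that avoids this. In the paper's complex the vertex stabilizers are conjugates of $F$ and the higher cell stabilizers are computed explicitly to be finite direct products of copies of $F$ and the Higman--Thompson group $F_3$ (Lemmas \ref{stabilizer2} and \ref{stabilizer3} and Proposition \ref{highercells}), all of which are classically known to be of type $F_{\infty}$, so no induction is needed. Until you (i) exhibit an actual directed poset or an alternative contractibility mechanism and (ii) identify the stabilizers with previously known $F_{\infty}$ groups rather than with $G$, the proposal remains a plan rather than a proof, with its hardest steps -- exactly the ones you flag yourself -- missing.
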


Our complex $X$ is a special type of CW complex built out of CW subdivisions of Euclidean cubes.
Although there is no natural metric on such a complex, 
notions of metric and curvature shall play an essential role in proving that the complex is contractible.
Finally, we remark that it is observed in \cite{Zaremsky} that all homotopy groups of $G$ at infinity are trivial.
So $G$ satisfies all four statements of Geoghegan's conjecture for Thompson's group $F$, and is the first such example.

\section{Proof Strategy}

In this section we provide an outline of the proof.
Our complex $X$
is a special type of CW complex,
which we call a \emph{complex of clusters}.
A \emph{cluster} is a certain CW subdivision of a cube.
An $n$-cluster is a CW subdivision of an $n$ dimensional cube $[0,1]^n$,
which is obtained by considering the intersection pattern with $[0,1]^n\subset \mathbf{R}^n$ of a family of hyperplanes of $\mathbf{R}^n$.
We give a precise definition of clusters in Section $5$.

For instance, a $2$-cluster can be a square with a single $2$-cell that fills it,
or a square with a diagonal $1$-cell, and two triangular $2$-cells that fill it. 
The latter cluster is obtained from the intersection pattern of $[0,1]^2$ 
with the hyperplane $x=y$ in $\mathbf{R}^2$.
A subcluster of the latter $2$-cluster is either a $0$-cell, or
one of the five $1$-cells (including the diagonal $1$-cell). 

To describe a \emph{complex of clusters},
we draw an analogy with cube complexes.
A cube complex is a CW complex obtained by gluing
cubes along their faces,
i.e. gluing along facial ``sub-cubes".
A complex of clusters is a CW complex obtained by gluing
clusters along subclusters.
The key difference is that in a cluster complex, subclusters could be placed diagonally
in a cluster.
For instance, we can take two $2$-clusters described in the previous paragraph,
and glue along the diagonal $1$-cells.
This way we obtain a complex with four triangular $2$-cells,
or four triangles all glued along one edge.

To give the reader a more concrete idea of what our complex $X$ looks like, 
we give a brief description of $X$.
A detailed description will appear in Sections $6$ and $7$.
The $0$-skeleton $X^{(0)}$ is defined as 
the set of right cosets of $F$ in $G$,
on which $G$ acts on the right.
Two cosets $F\tau_1, F\tau_2$
are connected by an edge in $X^{(1)}$
if the double coset $F\tau_1\tau_2^{-1} F$
is one of four distinguished double cosets. 
It is then immediate from the definition that $G$ acts on $X^{(1)}$
in a way that preserves the edge relation,
since $F\tau_1 g ( \tau_2g)^{-1} F= F\tau_1\tau_2^{-1} F$.

In order to describe the cluster complex structure on $X$,
we identify an infinite family $\mathcal{H}$ of finite, connected subgraphs of $X^{(1)}$,
whose elements are in a natural way $1$-skeletons of clusters.
We demonstrate that $\mathcal{H}$ is closed under finite intersections,
i.e. the intersection of a finite number of graphs in $\mathcal{H}$ is also a graph in $\mathcal{H}$
if it is nonempty.

Then we ``fill" the $1$-skeleton of each graph in $\mathcal{H}$
by adding higher cells to obtain a cluster.
We demonstrate that these fillings are ``compatible",
i.e. given two graphs $\Gamma_1,\Gamma_2\in \mathcal{H}$ and their respective fillings, 
the filling of the intersection $\Gamma_1\cap \Gamma_2$ agrees with the induced
fillings obtained from the restrictions of $\Gamma_1,\Gamma_2$ respectively.
Moreover, we demonstrate that this filling is $G$-equivariant.
The union of $X^{(1)}$ together with the fillings of graphs in $\mathcal{H}$ will define our complex of clusters $X$.

The structure of the paper is as follows.
In Section $4$ we construct a normal form for elements of $G$.
This normal form will play a small but essential role in the rest of the paper,
but is of independent interest since it can be a useful tool to study the group.
In Section $5$ we define the notion of a cluster, and a cluster complex.

In Section $6$ we define $X^{(1)}$ and study the action of $G$ on $X^{(1)}$.
In Section $6$, we shall also prove the \emph{expansion lemmas},
which shall be needed later in the paper, in particular during the proof of asphericity of $X$.
However, we prove these Lemmas here since their proofs shall emerge naturally from the ideas in Section $6$. 
In Section $7$, we define $\mathcal{H}$ and the higher cells of $X$.

In Section $8$ we prove that $X/G$ has finitely many cells in every dimension,
and that $Stab_G(e)$ for each cell $e$ is a group of type $\mathbf{F}_{\infty}$.
In fact, we demonstrate that the stabilizer of a cell is a product of Higman-Thompson groups.
In Section $9$ we prove that $X$ is simply connected,
and in Section $10$ we prove that $X$ is aspherical.
Then we conclude that $X$ is contractible.
At the end of the article we state a question, suggested to the author by Gromov, that could provide an interesting direction for further research.

A complex of clusters in general does not admit a natural piecewise Euclidean metric.
For instance, one cannot simply declare all the $1$-cells to be isometric to $[0,1]$,
since each $1$-cell in our complex will occur as a diagonal $1$-cell of clusters of any given dimension.
Moreover, each $1$-cell sits ``diagonally" inside some cluster and ``facially" inside others. 
Hence one cannot simply use the natural metric structure on the CW-subdivision of $[0,1]^n$ that describes a cluster,  as a model for a piecewise Euclidean metric.
Even if one perturbs this to a metric, it does not appear to be useful in proving that the complex is contractible.
However, our proof that the complex is aspherical shall involve notions of metric and curvature.
More particularly, we demonstrate that every finite subcomplex $Y$ of $X$ is contained in a subcomplex
$\bf{Y}$ of $X$ such that $\bf{Y}$ is homeomorphic to a nonpositively curved cube complex, and hence aspherical.

\section{Preliminaries}

In this section we will review some terminology needed later in the paper. 
Readers may wish to skim or skip the material and refer back to it only as necessary.
In this article, all actions shall be right actions.
However, when we make use of function notation we shall differ from this convention, 
i.e. for instance we shall write $x\cdot fg= g(f(x))$.

\subsection{Finiteness properties of groups.}
The classical finiteness properties of groups are that of 
being finitely generated and finitely presented. 
These notions were generalized by C.T.C. Wall \cite{Wall}. 
In this paper we are concerned with the properties \emph{type} $\mathbf{F}_n$. 
These properties are quasi-isometry invariants of groups \cite{Alonso}. 
In order to discuss these properties first we need to define Eilenberg-Maclane complexes.

An Eilenberg-Maclane complex for a group $G$, or a $K(G,1)$, 
is a connected CW-complex $X$ such that 
$\pi_1(X)=G$ and $\widetilde{X}$ is contractible.
It is a fact that for any group $G$, 
there is an Eilenberg-Maclane complex $X$ which is unique up to homotopy type.
A group is said to be \emph{of type} $\mathbf{F}_n$ if it admits an 
Eilenberg-Maclane complex with a finite $n$-skeleton.  
Clearly, a group is finitely generated if and only if it is of type $\mathbf{F}_1$, 
and finitely presented if and only if it is of type $\mathbf{F}_2$.
(For more details see \cite{Geoghegan}.)
A group is said to be of type $\mathbf{F}_{\infty}$ if it is of type $\mathbf{F}_n$ for all $n\in \mathbf{N}$.
Equivalently, a group is of type $\mathbf{F}_{\infty}$ if it admits a $K(G,1)$ with finitely many cells in each dimension.

The following is a special case of a well known result 
(See Proposition $1.1$ in \cite{BrownFiniteness}.
Note that the property discussed there is type $\mathbf{FP}_{\infty}$,
but since for the class of finitely presentable groups type $\mathbf{FP}_{\infty}$
and type $\mathbf{F}_{\infty}$ are the same, we state this proposition in terms of type $\mathbf{F}_{\infty}$).

\begin{prop}\label{criterion}
 Let $\Gamma$ be a finitely presentable group that acts on a cell complex $X$
by cell permuting homeomorphisms such that the following holds:
\begin{enumerate}
 \item $X$ is contractible.
\item The quotient $X/ \Gamma$ has finitely many cells in each dimension.
\item The stabilizers of each cell are of type $\mathbf{F}_{\infty}$.
\end{enumerate}
 Then $\Gamma$ is of type $\mathbf{F}_{\infty}$.
\end{prop}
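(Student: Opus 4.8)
The plan is to recast the three hypotheses as a statement about the cellular chain complex of $X$ regarded as a complex of $\Zbb\Gamma$-modules, and then to quote the standard homological algebra that turns such a complex into the conclusion. Since $\Gamma$ acts on $X$ by cell-permuting homeomorphisms, $C_{*}(X)$ is naturally a chain complex of $\Zbb\Gamma$-modules, and the augmentation fits it into
\[
\cdots \to C_{n}(X) \to C_{n-1}(X) \to \cdots \to C_{0}(X) \to \Zbb \to 0 ,
\]
where the last arrow is the augmentation. Because $\tilde{H}_{n}(X)=0$ for all $n\geq 0$ (hypothesis (1)), this augmented complex is exact; that is, $C_{*}(X)\to\Zbb$ is a $\Zbb\Gamma$-resolution of the trivial module $\Zbb$. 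Everything then reduces to two claims: that each $C_{n}(X)$ is a $\Zbb\Gamma$-module of type $FP_{\infty}$, and that a module admitting a resolution by modules of type $FP_{\infty}$ is itself of type $FP_{\infty}$; applied to $\Zbb$ over $\Zbb\Gamma$, the latter is by definition the statement that $\Gamma$ is of type $FP_{\infty}$.

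For the first claim I would choose a representative $\sigma$ for each $\Gamma$-orbit of $n$-cells and write $C_{n}(X)\cong\bigoplus_{\sigma}M_{\sigma}$, where $M_{\sigma}$ is the $\Zbb\Gamma$-submodule generated by $\sigma$. When the stabiliser $\Gamma_{\sigma}$ acts on $\sigma$ preserving orientation, $M_{\sigma}\cong\Zbb[\Gamma/\Gamma_{\sigma}]=\Zbb\Gamma\otimes_{\Zbb\Gamma_{\sigma}}\Zbb$; in general one first passes to the index-$\leq 2$ subgroup of $\Gamma_{\sigma}$ of orientation-preserving elements, which is again of type $FP_{\infty}$, and $M_{\sigma}$ is an induced module over it. Induction from a subgroup is exact (the larger group ring is free over the smaller one) and carries finitely generated free modules to finitely generated free modules, hence carries modules of type $FP_{\infty}$ to modules of type $FP_{\infty}$; together with hypothesis (3) this shows each $M_{\sigma}$ is of type $FP_{\infty}$ over $\Zbb\Gamma$. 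By hypothesis (2) the direct sum defining $C_{n}(X)$ is finite, and a finite direct sum of $FP_{\infty}$ modules is $FP_{\infty}$ (sum the resolutions termwise), so each $C_{n}(X)$ is of type $FP_{\infty}$.

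For the second claim I would invoke the dimension-shifting lemma: if an $R$-module $M$ has a resolution $\cdots\to Q_{1}\to Q_{0}\to M\to 0$ with every $Q_{i}$ of type $FP_{\infty}$, then $M$ is of type $FP_{\infty}$. This is proved by induction on $n$, showing $M$ is of type $FP_{n}$: the case $n=0$ is immediate since $M$ is a quotient of the finitely generated module $Q_{0}$, and for the inductive step one puts $Z=\ker(Q_{0}\to M)$, notes that $\cdots\to Q_{2}\to Q_{1}\to Z\to 0$ is again an $FP_{\infty}$-resolution so $Z$ is of type $FP_{n-1}$ by induction, and then applies the standard short exact sequence estimate to $0\to Z\to Q_{0}\to M\to 0$ (with $Q_{0}$ of type $FP_{n}$) to conclude that $M$ is of type $FP_{n}$. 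Taking $R=\Zbb\Gamma$, $M=\Zbb$, and the resolution $C_{*}(X)$ finishes the argument; this is exactly Brown's Proposition $1.1$ in \cite{BrownFiniteness}.

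The homological bookkeeping in the last step is routine. The one point genuinely requiring care is the orientation subtlety in the second paragraph: a cell-permuting action can send an $n$-cell to itself while reversing its orientation, so $C_{n}(X)$ is built not literally from the permutation modules $\Zbb[\Gamma/\Gamma_{\sigma}]$ but from induced modules of the orientation-preserving stabilisers — which, being of finite index in the $\Gamma_{\sigma}$, remain of type $FP_{\infty}$. Once that is accounted for, the proof is a direct application of the standard machinery.
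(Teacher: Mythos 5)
The paper itself does not prove this proposition: it is stated as a special case of Proposition 1.1 of \cite{BrownFiniteness} and quoted without proof, so there is no in-paper argument to compare against. Your chain-level argument is the standard direct proof of that result (Brown's own treatment in the cited source runs through equivariant homology/a filtration spectral sequence rather than bare dimension shifting), and its skeleton is sound: acyclicity makes the augmented cellular chain complex a $\Zbb\Gamma$-resolution of $\Zbb$; induction from a subgroup is exact and preserves finite generation of projectives, so it preserves type $FP_{\infty}$; finite direct sums preserve it; and your dimension-shifting induction, using the short exact sequence $0\to Z\to Q_0\to M\to 0$ with $Q_0$ of type $FP_n$ and $Z$ of type $FP_{n-1}$, is the correct standard lemma.

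The one step that is wrong as stated is your handling of orientations. If some element of $\Gamma_\sigma$ reverses the orientation of $\sigma$, then the $\Zbb\Gamma$-submodule generated by $\sigma$ is $M_\sigma\cong\Zbb\Gamma\otimes_{\Zbb\Gamma_\sigma}\Zbb_\sigma$, where $\Zbb_\sigma$ is infinite cyclic with $\Gamma_\sigma$ acting through the orientation character; it is \emph{not} an induced module over the orientation-preserving subgroup $\Gamma_\sigma^{+}$: as an abelian group $M_\sigma$ is free on the orbit $\Gamma/\Gamma_\sigma$, whereas $\Zbb\Gamma\otimes_{\Zbb\Gamma_\sigma^{+}}\Zbb$ is free on $\Gamma/\Gamma_\sigma^{+}$, which is twice as large when $[\Gamma_\sigma:\Gamma_\sigma^{+}]=2$, so the two cannot be isomorphic. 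The repair is short: $\Zbb_\sigma$ is of type $FP_\infty$ over $\Zbb\Gamma_\sigma$, because twisting a free resolution of $\Zbb$ of finite type by the orientation character again gives a free resolution of finite type (a character twist of $\Zbb\Gamma_\sigma$ is free of rank one, via $g\mapsto\chi(g)g$); equivalently, one can note that the restriction of $\Zbb_\sigma$ to the finite-index subgroup $\Gamma_\sigma^{+}$ is trivial and transfer finiteness back up. With $\Zbb_\sigma$ of type $FP_\infty$ over $\Zbb\Gamma_\sigma$, induction to $\Gamma$ gives that $M_\sigma$ is of type $FP_\infty$, and the rest of your argument goes through. (In the application in this paper the issue does not even arise, since it is shown that the stabilizer of a cell fixes each of its $0$-cells, so cell stabilizers preserve orientation and $M_\sigma\cong\Zbb[\Gamma/\Gamma_\sigma]$ on the nose.)
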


\subsection{Nonpositively curved cube complexes}
Although the complex we construct in this article is not a cube complex, 
cube complexes will play an important role in the proof of contractibility of our complex.

By a $\emph{regular }n\emph{-cube}$ $\square^n$ 
we mean a cube which is isometric 
to the cube $[0,1]^n$ in $\mathbf{R}^n$. 
A cube complex is a cell complex of 
regular Euclidean cubes glued along their faces by homeomorphisms.
The metric on such cube complexes is the piecewise Euclidean metric.
(see \cite{BridsonH} for details). 

 Given a vertex $v$ of a regular cube $\square^n$, the link $Lk(v,\square^n)$ is the set of unit tangent vectors at 
 $v$ that point in $\square^n$. 
 This is a subset of the unit sphere $S^{n-1}$ which is 
 homeomorphic to a simplex of dimension $n-1$. 
 

 A simplicial complex $Z$ is called a ``flag'' complex 
 if any set $v_1,...,v_n$ of vertices of $Z$ that are pairwise 
 connected by an edge span a simplex. 
 This is also known as the ``no empty triangles'' condition.
Now we are ready to state the main definition.

\begin{defn}
 A cube complex $X$ is said to be nonpositively curved 
 if the link of each vertex is a flag complex. 
\end{defn}

For basic results concerning these complexes, we refer the reader to \cite{BridsonH}.
We shall only use the fact that nonpositively curved cube complexes are aspherical,
which is a corollary of the following theorem of Gromov.

\begin{thm}\label{npc}
 (Gromov) A cube complex $X$ is $\textup{CAT}(0)$ 
 if and only if it is nonpositively curved and simply connected. 
\end{thm}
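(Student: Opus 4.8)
The plan is to derive the theorem from the metric local-to-global principle for nonpositive curvature, so that only a combinatorial statement about links needs to be proved by hand. First I would record that the piecewise Euclidean metric makes $X$ a complete geodesic metric space: this is Bridson's theorem on $M_\kappa$-complexes built from finitely many isometry types of cells (see \cite{BridsonH}), and a cube complex has exactly one shape of cell in each dimension. By the generalized Cartan--Hadamard theorem for metric spaces (\cite{BridsonH}, Part II), a complete, simply connected geodesic space that is locally $\textup{CAT}(0)$ is globally $\textup{CAT}(0)$; conversely every $\textup{CAT}(0)$ space is contractible, hence simply connected, and is trivially locally $\textup{CAT}(0)$. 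So the whole theorem reduces to the local assertion that $X$ is locally $\textup{CAT}(0)$ if and only if the link of every vertex of $X$ is a flag complex.

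Next I would invoke the \emph{link condition} for piecewise Euclidean complexes (\cite{BridsonH}): such a complex is locally $\textup{CAT}(0)$ precisely when the link $Lk(x,X)$ of every point $x$ is $\textup{CAT}(1)$, that is, satisfies the curvature bound $1$ in the spherical sense. For $x$ in the interior of a cube this link is a round sphere, and for $x$ in the interior of a positive-dimensional face $f$ the link is the spherical join of a round sphere with $Lk(f,X)$; since a spherical join is $\textup{CAT}(1)$ exactly when both factors are, all of these conditions follow from the single family of conditions ``$Lk(v,X)$ is $\textup{CAT}(1)$ for every vertex $v$''. Each $Lk(v,X)$ is, by its construction from the definitions above, an all-right spherical simplicial complex: every edge has length $\pi/2$ and every simplex is a spherical ``all right'' simplex. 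Thus everything comes down to the combinatorial heart of Gromov's theorem, namely that \emph{an all-right spherical simplicial complex $L$ is $\textup{CAT}(1)$ if and only if $L$ is a flag complex}.

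The ``only if'' direction of this lemma is the easy one. If $v_0,v_1,v_2$ are pairwise joined by edges but span no $2$-simplex, then the three edges form a closed path of length $3\pi/2$; at each $v_i$ the two incident edges correspond to two vertices of $Lk(v_i,L)$ that are \emph{not} joined by an edge there (that edge would be the missing $2$-simplex), hence lie at distance at least $\pi$ in the all-right link, so the path is locally geodesic at $v_i$. Therefore it is a closed geodesic of length $3\pi/2<2\pi$, which is impossible in a $\textup{CAT}(1)$ space. For the ``if'' direction I would induct on $\dim L$: the link of a vertex of a flag complex is again flag, so by induction each $Lk(v,L)$ is $\textup{CAT}(1)$, and the link condition in the piecewise spherical setting then reduces $\textup{CAT}(1)$-ness of $L$ to the statement that $L$ contains no closed geodesic of length strictly less than $2\pi$. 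One proves this by contradiction: take a hypothetical short closed geodesic, record the sequence of open simplices it traverses, and exploit the rigidity of the all-right metric — successive directions at each crossing point are forced apart by at least $\pi/2$ in the relevant link — to produce three pairwise-adjacent vertices spanning no simplex, contradicting flagness.

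I expect this last step — ruling out closed geodesics of length below $2\pi$ in a flag all-right spherical complex, i.e. Gromov's ``no empty simplices'' analysis of a would-be short geodesic — to be the main obstacle; the two reductions via Cartan--Hadamard and the link condition are soft, and the ``only if'' half of the key lemma is routine. Granting the lemma, the remaining half of the stated theorem is immediate: if $X$ is $\textup{CAT}(0)$ it is contractible, hence simply connected, and being locally $\textup{CAT}(0)$ it has every vertex link $\textup{CAT}(1)$, hence flag, i.e. $X$ is nonpositively curved.
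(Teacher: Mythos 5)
The paper never proves this statement: it is quoted as Gromov's theorem, with \cite{Gromov} and \cite{BridsonH} cited as the sources, and it is used as a black box later (to conclude that nonpositively curved cube complexes have trivial higher homotopy). So there is no internal argument to compare against; your proposal follows exactly the standard route of the cited reference (Bridson--Haefliger): Cartan--Hadamard reduces the global statement to a local one, the link condition for piecewise Euclidean/spherical complexes reduces local $\textup{CAT}(0)$ to the vertex links being $\textup{CAT}(1)$, and everything then rests on Gromov's lemma that an all-right spherical simplicial complex is $\textup{CAT}(1)$ if and only if it is flag. The soft reductions and the ``only if'' half of the lemma (an empty triangle gives a closed local geodesic of length $3\pi/2$) are stated correctly.

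However, as a proof the proposal has a genuine gap precisely where you say the main obstacle lies: the ``flag implies $\textup{CAT}(1)$'' direction is not argued, only gestured at, and the gesture is not the actual argument. The standard proof does not conclude by extracting three pairwise-adjacent vertices spanning no simplex from a hypothetical short geodesic; rather, one inducts on dimension (so all links are $\textup{CAT}(1)$ and the problem becomes excluding closed local geodesics of length $<2\pi$), and then analyzes how such a loop meets the open star of a vertex $v$: using the all-right structure one shows the loop spends length exactly $\pi$ inside the star, entering and exiting on $Lk(v,L)$, and flagness is then used positively (to produce simplices, e.g.\ via the simplicial join of $v$ with simplices met by the loop) to force total length at least $2\pi$. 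Without this analysis the theorem is not proved. A second, smaller caveat: your appeal to Bridson's theorem on complexes with finitely many isometry types of cells to get a complete geodesic metric is only valid for finite-dimensional cube complexes (an unbounded-dimension cube complex has one shape per dimension, hence infinitely many shapes), so either a dimension hypothesis should be made explicit or a separate argument for completeness and existence of geodesics is needed; this matters in principle here, since the complex $X$ built in this paper is infinite-dimensional and the theorem is applied to subcomplexes whose dimension is not bounded a priori.
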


This means that the universal cover of a nonpositively curved cube complex is $CAT(0)$, and hence contractible.
So we have the following.

\begin{cor}
Nonpositively curved cube complexes are aspherical.
\end{cor}

\subsection{Coset graphs}\label{vertrangraph}

We shall use the notion of a \emph{coset graph}, which is a certain generalisation of the more familiar notion of a \emph{Cayley graph}.
We remark that this notion may not be 
familiar to a group theorist,
although it is more likely familiar to a graph theorist.
Let $G$ be a group and $H$ be a subgroup of $G$.
Let $S$ be a finite set of elements of $G$ such that:
\begin{enumerate}
 \item $S\subseteq G\setminus H$ and for each $s\in S$, $s^{-1}\in S$.
\item $S\cup H$ generates $G$.
\end{enumerate} 
Then we can form the so called \emph{coset graph}
$\textup{Cos}(G,H,S)$ as follows.
The vertices of $\textup{Cos}(G,H,S)$ are the right cosets
of $H$ in $G$, and two cosets $Hg_1,Hg_2$ are connected by an edge if 
$$g_1g_2^{-1}\in H(S)H=\bigcup_{s\in S} HsH$$
Here $$HsH=\{h_1sh_2\mid h_1,h_2\in H\}$$ is a double coset.

The group $G$ acts on the graph $\textup{Cos}(G,H,S)$ on the right,
and this action is vertex transitive.
Moreover the action of $G$ on this graph is faithful if and only if the core
$\bigcap_{g\in G}g^{-1}Hg$ is trivial.
By the following result (this is an easy exercise, and is stated as Theorem $3.8$ in \cite{Lauri}), in fact every vertex transitive graph, i.e. a graph whose group of automorphisms is vertex transitive, can be described in this way.

\begin{prop}\label{Sabidussi}
Let $\Omega$ be some vertex transitive graph.
Then $\Omega$ is isomorphic to some coset graph $\textup{Cos}(G,H,S)$.
\end{prop}

The $1$-skeleton of our complex $X$ will emerge as a vertex transitive graph in this way.

\subsection{Hyperplane arrangements and the face complex}\label{hyparr}

Consider $\mathbf{R}^n$ endowed with the usual orthonormal basis,
and with variables $x_1,...,x_n$ representing coordinates in this basis.
An affine hyperplane is an $(n-1)$-dimensional subspace
comprising of solutions of an equation of the form
$$a_1x_1+...+a_nx_n=a_{n+1}$$
where $a_i\in \mathbf{R}$.
The hyperplane itself is denoted as the set 
$$\{a_1x_1+...+a_nx_n=a_{n+1}\}$$
  
A finite \emph{hyperplane arrangement} $\mathcal{A}$ is a finite set of affine hyperplanes in $\mathbf{R}^n$.
A \emph{region} of $\mathcal{A}$ is a connected component of $\mathbf{R}^n\setminus \bigcup_{H\in \mathcal{A}}H$.
Let $\mathcal{R(A)}$ denote the set of regions of $\mathcal{A}$.
It is an elementary exercise to prove that every region is open, convex and thus homeomorphic to the interior of an $n$-dimensional ball.

The set of \emph{flats} of the arrangement are the affine subspaces of $\mathbf{R}^n$ obtained by taking an
intersection of hyperplanes in $\mathcal{A}$.
The trivial intersection is also a flat, which equals $\mathbf{R}^n$.
Given a flat $T$, we denote the hyperplane arrangement $\mathcal{A}\restriction T$
consisting of the set of hyperplanes $$\{ T\cap H\mid H\in \mathcal{A}, T\not \subseteq H\}$$
in the subspace $T$.
We define the regions of $T$ in a similar way as above,
i.e. $\mathcal{R(A)}\restriction T$ equals the set of connected components of $T\setminus  \bigcup_{H\in \mathcal{A}\restriction T} H$.

The union of the sets of regions $$\bigcup_{T\text{ is a flat of }\mathcal{A}} \mathcal{R(A)}\restriction T $$
is called the \emph{face complex} of the arrangement $\mathcal{A}$.
The face complex provides a cellular structure on $\mathbf{R}^n\bigcup \partial\mathbf{R}^n=\mathbb{B}^n$.

\subsection{Binary sequences and the group $F$}
We will take $\mathbf{N}$ to include $0$. 
Let $2^\Nbb$ denote the collection of all infinite binary sequences and
let $2^{<\Nbb}$ denote the set of all finite binary sequences.
For $s\in 2^{<\mathbf{N}}$, $s(i)$ denotes the $i$'th digit of $s$.
If $i \in \Nbb$ and $u$ is a binary sequence of length at least $i$, we will let 
$u\restriction i$ denote the initial part of $u$ of length $i$.
We denote by $|s|$ the length of $s$,
which is the number of digits in $s$.

The infinite rooted binary tree shall play an important role in this article,
and the reader may find it useful to visualise certain definitions within this tree.
We shall associate the standard picture with the tree,
where the branches labelled by $0$'s appear to the left of the branches labelled by $1$'s.
We shall also visualise finite rooted binary trees in the same way.
\begin{center}
\begin{tikzpicture}[level distance=1.5cm,
  level 1/.style={sibling distance=3cm},
  level 2/.style={sibling distance=1.5cm}]
  \centering
  \node {$\emptyset$}
    child {node {0}
      child {node {00}}
      child {node {01}}
    }
    child {node {1}
    child {node {10}}
      child {node {11}}
    };
\end{tikzpicture}
\end{center}

If $s$ and $t$ are finite binary sequences, then we will write $s \subseteq t$
if $s$ is an initial segment of $t$ and $s \subset t$ if $s$ is a proper
initial segment of $t$.
If neither $s \subseteq t$ nor $t \subseteq s$, then we will say that
$s$ and $t$ are \emph{independent}.
A list of finite binary sequences $s_1,...,s_n$ is said to be \emph{independent},
if they are pairwise independent.

The set $2^{<\Nbb}$ is equipped with an order defined
by $s < t$ if either $t \subset s$ or if $s$ and $t$ are independent
and $s(i) < t(i)$ where $i$ is the smallest number such that $s(i) \ne t(i)$.
If $u, s_1,...,s_n\in 2^{<\mathbf{N}}$, we say that
$u$ \emph{dominates} $s_1,...,s_n$ if for each $1\leq i\leq n$,
either $s_i,u$ are independent or $s_i\subset u$.

The finite binary sequences $s,t$ are said to be \emph{consecutive}
if there is a binary sequence $u$ and numbers $n_1,n_2\in \mathbf{N}$
such that $s=u 0 1^{n_1}$ and $t=u 1 0^{n_2}$. 
A list of finite binary sequences $s_1,...,s_n$ is said to be \emph{consecutive}
if each pair $s_i,s_{i+1}$ is consecutive for $1\leq i\leq n-1$.
Note that if $s_1,...,s_n$ are consecutive then they are automatically independent. 
In particular, a list of consecutive binary sequences is an ordered subset of 
the set of leaves of a finite rooted binary tree, listed in order from left to right.
For instance, the list $s_1=01, s_2=100, s_3=101$ is consecutive as viewed in the finite rooted binary tree in the picture below.

\begin{center}
\begin{tikzpicture}[level distance=1.5cm,
  level 1/.style={sibling distance=3cm},
  level 2/.style={sibling distance=1.5cm}]
  \centering
  \node {$\emptyset$}
    child {node {0}
      child {node {00}}
      child {node {01}}
    }
    child {node {1}
    child {node {10} child {node{100}}
    child {node{101}}}
      child {node {11}}
    };
\end{tikzpicture}
\end{center}

If $\xi$ and $\eta$ are infinite binary sequences, then we will say that
$\xi$ and $\eta$ are \emph{tail equivalent} if there are $s,t\in 2^{<\mathbf{N}}$ and $\zeta\in 2^{\mathbf{N}}$ such that
$\xi = s \zeta$ and $\eta = t \zeta$.
We use $0^{\infty}, 1^{\infty}$ to denote the constant infinite sequences
$000....$ and $111...$ respectively.
More generally, given a finite binary sequence $s$, $s^{\infty}$ denotes
the sequence $sss....$.

We denote the collection of all finite rooted binary trees by $\Tcal$.
A tree $T$ in $\Tcal$ will be denoted by a set of finite binary sequences $s_1,...,s_n$
which are the addresses of leaves in $T$.
The indices are ordered so that if $i<j$ then $s_i<s_j$.
 We view elements $T$ of $\Tcal$ as \emph{prefix} sets.
So we view $T$ as a set of finite binary sequences with the property that every infinite 
binary sequence has a unique initial segment in $T$. 

A \emph{tree diagram} is a pair $(L,R)$ of elements of $\Tcal$
with the property that $|L|=|R|$.
A tree diagram describes a map of infinite binary sequences
as follows:
$$
s_i \xi \mapsto t_i \xi
$$
where $s_i$ and $t_i$ are the $i$th elements of $L$ and $R$, respectively, in the
order as defined above and $\xi$ is any binary sequence.
The collection of all such functions from $2^\Nbb$ to $2^\Nbb$ defined
in this way, under the operation of composition, is \emph{Thompson's group $F$}.
The function associated to a tree diagram
is also defined on any finite binary sequence $u$ such that
$u$ has a prefix in $L$.
So the group $F$ admits a partial action on $2^{<\mathbf{N}}$.
Given $f\in F$ and $s\in 2^{<\mathbf{N}}$, we say that $f$ \emph{acts on} $s$ if $s\cdot f$ is defined.
Similarly, given $s_1,...,s_n\in 2^{<\mathbf{N}}$ we say that $f$ \emph{acts on} $s_1,...,s_n$ if $s_1\cdot f,...,s_n\cdot f$
are all defined.

We direct the reader to the standard reference \cite{Belk} for
the definition and properties of Thompson's group $F$; 
additional information can be found in \cite{Cannon}.
We shall mostly follow the notation and conventions
of \cite{LodhaMoore}.

\subsection{The group $G$}

To prove that the group $G$ is of type $\mathbf{F}_{\infty}$,
the action of the group on the real line by piecewise projective homeomorphisms
does not appear to be useful.
In \cite{LodhaMoore} we describe a \emph{combinatorial model} for $G$
by means of a faithful action of $G$ by homeomorphisms of the Cantor set $2^{\mathbf{N}}$.
This model was used to prove that $G$ is finitely presentable.
This combinatorial model will be used throughout this paper,
and we shall not refer to the functions $a,b,c:\mathbf{R}\to \mathbf{R}$
which were defined in the introduction.

We recall the combinatorial description, and describe the generators and relations.
We start with the following two primitive functions:
\[
\xi\cdot x
 =
\begin{cases}
\seq{0}\eta & \textrm{ if } \xi = \seq{00} \eta \\
\seq{10}\eta & \textrm{ if } \xi = \seq{01} \eta \\
\seq{11}\eta & \textrm{ if } \xi = \seq{1} \eta \\
\end{cases}
\qquad
\xi\cdot y =
\begin{cases}
\seq{0}(\eta\cdot y) & \textrm{ if } \xi = \seq{00} \eta \\
\seq{10}(\eta\cdot y^{-1}) & \textrm{ if } \xi = \seq{01} \eta \\
\seq{11}(\eta\cdot y) & \textrm{ if } \xi = \seq{1} \eta \\
\end{cases}
\]

From these functions, we define families of functions $x_s$ $(s \in 2^{<\Nbb})$ and 
$y_s$ $(s \in 2^{<\Nbb})$
which act just as $x$ and $y$, but localised to those binary sequences which extend $s$.
\[
\xi\cdot x_s
 =
\begin{cases}
s (\eta\cdot x) & \textrm{ if } \xi = s \eta \\
\xi & \textrm{otherwise}
\end{cases}
\qquad
\xi\cdot y_s
 =
\begin{cases}
s (\eta\cdot y) & \textrm{ if } \xi = s \eta \\
\xi & \textrm{otherwise}
\end{cases}
\]
If $s$ is the empty-string, it will be omitted as a subscript.
Let $$S=\{x_t,y_s\mid s,t\in 2^{<\Nbb},s\neq \seq{0}^k,s\neq \seq{1}^k, s\neq \emptyset\}$$
Our group $G$ is generated by functions in the set $S$.
In fact, $G$ is generated by $x,x_1,y_{10}$ which correspond respectively to the functions $a,b,c$ that were defined in the introduction.
The infinite generating set $S$ is much more desirable for the purpose of our proofs and will be used throughout this article.
We now list an infinite set of relations $R$ satisfied by the generators in $S$.
Note that for finite binary sequences $t,s$, $t\cdot x_s$ is defined if either $s_1\subseteq t$, for some $s_1\in \{s00,s01,s1\}$,  or if $s,t$ are independent.

\begin{enumerate}

\item If $t\cdot x_s$ is defined, then $x_t x_s = x_s x_{t\cdot x_s}$ 

\item $x_{s}^2=x_{s \seq{0}}x_sx_{s \seq{1}}$.

\item If $t\cdot x_s$ is defined, then
$y_t x_s = x_s y_{t\cdot x_s}$.

\item if $s$ and $t$ are independent, then $y_s y_t = y_t y_s$.

\item $y_s = x_s y_{s \seq{0}} y_{s \seq{10}}^{-1} y_{s \seq{11}}$.
\end{enumerate}

Note that in the above there is no occurrence of the functions
$y^{\pm}_s$ where $s=\seq{1}^k$, $s=\emptyset$ or $s=\seq{0}^k$.
We proved in \cite{LodhaMoore} that:

\begin{thm}
 The group $G\cong\langle S\mid R\rangle$ is finitely presented, 
 nonamenable, and does not contain nonabelian free subgroups.
\end{thm}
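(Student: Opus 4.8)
The plan is to establish the three assertions separately.

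\emph{Absence of nonabelian free subgroups.} This is immediate from the construction. Each of $a,b,c$ extends to a piecewise-$\PSL_2(\mathbb{R})$ homeomorphism of $\mathbb{RP}^1$ fixing $\infty$: for $b$ and $c$ this is read off the displayed formulas, and outside their supports these maps are the identity or $t\mapsto t+1$, both of which fix $\infty$. Hence $G=\langle a,b,c\rangle$ is a subgroup of Monod's group $H$. Since Monod proved that $H$ contains no nonabelian free subgroup and a free subgroup of $G$ is in particular a free subgroup of $H$, the group $G$ has none either.

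\emph{Finite presentability.} I would derive this from the infinite presentation $\langle S\mid R\rangle$ by Tietze transformations. First, reduce to a finite generating set: relation (5) at the root expresses $x$ --- and, applied at an arbitrary node, every $x_s$ --- as a word in the $y_t$, while relation (3) in the form $x_s^{-1}y_t x_s = y_{t.x_s}$, together with the fact that the $x_s$ generate a (finitely generated) copy of Thompson's group $F$ acting transitively enough on the binary tree, lets one write every $y_s$ in terms of finitely many of the generators, and ultimately in terms of $a,b,c$. Second --- and this is the bulk of the work --- one shows that every relation in $R$ is a consequence of finitely many of them. I would stratify $R$ according to the depth of the finite sequences appearing in a relation, and argue by downward induction: the ``subdivision'' relations (2) and (5) trade a relation involving deep sequences for ones involving shallower sequences, while the commutation relations (4) and the finite presentation of $F$ coming from (1)--(2) are used to move supports around the tree. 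The outcome is the concrete presentation on $3$ generators and $9$ relations. Each individual verification is routine; the delicate point is to organize the induction so that it terminates on a finite set.

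\emph{Nonamenability.} This is the substantive point, and it does not follow from $G\le H$, since amenability passes to subgroups --- a subgroup of a nonamenable group may be amenable (indeed $G$ contains a copy of Thompson's group $F$, whose amenability is an open problem). I would adapt Monod's argument directly to $G$. The essential feature of the generators is that their pieces include \emph{hyperbolic} elements of $\PSL_2(\mathbb{R})$: on $[\tfrac12,1]$ the map $b$ is the hyperbolic Möbius transformation $t\mapsto 3-\tfrac1t$, and on $[0,1]$ the map $c$ is the hyperbolic Möbius transformation $t\mapsto\tfrac{2t}{1+t}$ (with repelling fixed point $0$ and attracting fixed point $1$). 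Following Monod, such a piecewise-projective homeomorphism of $\mathbb{RP}^1$ fixing $\infty$ admits a canonical lift to a homeomorphism of $\mathbb{R}^2\setminus\{0\}$ covering its action on $\mathbb{RP}^1$, and these lifts compose correctly, so $G$ acts on $\mathbb{R}^2\setminus\{0\}$. If $G$ were amenable this action would carry a $G$-invariant mean; but pushing it forward along $\mathbb{R}^2\setminus\{0\}\to\mathbb{RP}^1$ and using that the hyperbolic pieces --- together with their $G$-conjugates and their localizations to arbitrarily small subtrees, obtained by conjugating by suitable $x_s,y_s$ --- scale along an eigendirection by a ratio different from $1$ while acting on $\mathbb{RP}^1$ with north--south dynamics, contradicts invariance, exactly as in Monod's proof that $H$ is nonamenable. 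The main obstacle I anticipate is confirming that $G$ --- one finitely generated group, vastly smaller than $H$ --- still carries enough of this projective, hyperbolic dynamics, suitably localized on the binary tree, for Monod's obstruction to apply; this is exactly where the precise forms of $b$ and $c$ and the relations $R$ are needed.
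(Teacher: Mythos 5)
The first thing to note is that the paper you are comparing against does not prove this theorem at all: it is quoted from \cite{LodhaMoore}, and the machinery developed there (standard forms, permissible substitutions, potential cancellations, the well-founded ordering on standard forms) is precisely what the present paper keeps citing. Measured against that, only one third of your proposal is complete. The absence of nonabelian free subgroups is argued exactly as in the cited work: $a,b,c$ are piecewise-$\PSL_2(\mathbb{R})$ homeomorphisms of $\mathbb{RP}^1$ fixing $\infty$, so $\langle a,b,c\rangle$ embeds in Monod's group $H(\mathbb{R})$, which has no nonabelian free subgroups. Note, though, that the theorem concerns the abstract group $\langle S\mid R\rangle$: to transfer ``no free subgroups'' you need the isomorphism $\langle S\mid R\rangle\cong\langle a,b,c\rangle$ (a surjection onto the homeomorphism group would suffice for nonamenability, but not here), and this faithfulness of the infinite presentation is itself one of the main results of \cite{LodhaMoore}; you use it silently.

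The other two assertions are left as strategy sketches with the decisive steps missing. For finite presentability, ``stratify $R$ by depth and induct downward, each verification routine'' is exactly where the content lies: deriving all of $R$ from $9$ relations requires the calculus of standard forms and a termination argument via a well-founded ordering (the Lemmas $4.4$--$4.11$ and $5.4$--$5.9$ of \cite{LodhaMoore} invoked throughout this paper), none of which your sketch supplies; even your first Tietze step is shaky, since relation $(5)$ ``at the root'' is not in $R$ at all ($y$, $y_{\seq{0}}$, $y_{\seq{11}}$ have constant indices and are not generators). For nonamenability you correctly observe that $G\le H$ gives nothing, but the proposed remedy --- ``adapt Monod's argument'', described as lifting to $\mathbb{R}^2\setminus\{0\}$ and pushing a mean forward against hyperbolic north--south dynamics --- is not Monod's argument (his proof transports an invariant mean into one invariant under the affine subgroup $P(A)\le H(A)$ over a dense subring $A$, exploiting that suitable $H(A)$-orbits coincide with $\PSL_2(A)$-orbits, and concludes the absurdity that $\PSL_2(A)$ is amenable), and it cannot be applied ``exactly'' to $G$: every element of $G$ agrees with a translation near $\pm\infty$, so $G$ contains no homothety $t\mapsto\lambda t$ and no copy of $P(A)$. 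Supplying the substitute transitivity/dynamics for the specific generators $a,b,c$ is the actual content of the nonamenability proof in \cite{LodhaMoore}; your proposal names this as ``the main obstacle'' and leaves it open, so this part is a genuine gap rather than a proof.
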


The generators 
$\{x_s\mid s\in 2^{<\mathbf{N}}\}$
together with the relations 

\begin{enumerate}

\item If $t\cdot x_s$ is defined, then
$x_t x_s = x_s x_{t\cdot x_s}$.

\item $x_{s}^2=x_{s \seq{0}}x_sx_{s \seq{1}}$

\end{enumerate}
describe an infinite presentation for Thompson's group $F$.

Recall that in \cite{Cannon} 
$$\{x^{-1},x_1^{-1},x_{11}^{-1},x_{111}^{-1},...\}\subseteq X$$
is used as the infinite generating set for $F$.
We now recall the normal form for Thompson's group $F$
in this generating set (See Corollary-Definition 2.7 in \cite{Cannon}).
We rephrase the normal form to suit our conventions,
which differ from those used in \cite{Cannon}.
We shall refer to this normal form as the \emph{$F$-normal form}.

\begin{thm}\label{Fnormalform}
Every non-trivial element of $F$ can be expressed in unique normal form:
$$x^{-b_0}x_1^{-b_1}x_{1^2}^{-b_2}...x_{1^n}^{-b_n}x_{1^n}^{a_n}...x_1^{a_1}x^{a_0}$$
such that:
\begin{enumerate}
\item $a_0,...,a_n$ and $b_0,...,b_n$ are nonnegative numbers.
\item Exactly one of $a_n,b_n$ is nonzero.
\item If for some $0\leq k\leq n-1$ it is true that $a_k,b_k>0$, then $a_{k+1}>0$ or $b_{k+1}>0$.
\end{enumerate}
Furthermore, every such normal form word is nontrivial in $F$.
\end{thm}

Now we recall from \cite{LodhaMoore} the following definitions.

\begin{defn}
An $X$-word is a word in the letters
$$\{x_s^t\mid s\in 2^{<\mathbf{N}}, t\in \mathbb{Z}\}$$
A $Y$-word is a word in the 
letters $$Y=\{y_s^t\mid s\in 2^{<\mathbf{N}},s\neq \seq{0}^k,s\neq \seq{1}^k, s\neq \emptyset, t\in \mathbb{Z}\}$$
An $S$-word is a word in the generators 
$$S=\{y_s^t,x_u^v\mid s,u\in 2^{<\mathbf{N}},s\neq \seq{0}^k,s\neq \seq{1}^k, s\neq \emptyset,\text{ and } t,v\in \mathbb{Z}\}$$
The elements of $Y$
are called \emph{percolating elements}.
\end{defn}

Recall that for $s,t\in 2^{<\Nbb}$ we have that $s < t$ if either $t \subset s$ or if $s$ and $t$ are independent
and $s(i) < t(i)$ where $i$ is the smallest number such that $s(i) \ne t(i)$.
(Here $s(i)$ is the $i$'th digit of $s$.)

\begin{defn}
An $S$-word $fy_{s_1}^{t_1}...y_{s_n}^{t_n}$ is in \emph{standard form} if
it is the concatenation of an $X$-word $f$ followed by a $Y$-word 
$y_{s_1}^{t_1}...y_{s_n}^{t_n}$
with the property that $s_i<s_j$ if $i<j$.
We will write \emph{standard form} to mean an $S$-word in standard form.

It is convenient to denote a standard form as $f\lambda$,
where $f$ is an $X$-word and $\lambda$ is a $Y$-word in standard form.
We will mostly use the convention of denoting $X$-words by letters $f,g,h$,
often with subscripts, and $Y$-words in standard form with either greek letters,
or explicitly as $y_{s_1}^{t_1}...y_{s_n}^{t_n}$.
 
The \emph{depth} of $fy_{s_1}^{t_1}...y_{s_n}^{t_n}$
equals $\textup{min}\{|s_i|\mid 1\leq i\leq n\}$.
We use the convention that standard forms that are $X$-words have infinite depth.
\end{defn}

We also define a slightly weaker notion than a standard form,
which will be useful in phrasing our arguments.

\begin{defn}
An $S$-word $fy_{s_1}^{t_1}...y_{s_n}^{t_n}$ is in \emph{weak standard form} if
it is the concatenation of an $X$-word $f$ followed by a $Y$-word 
$y_{s_1}^{t_1}...y_{s_n}^{t_n}$
with the property that if $s_j\subset s_i$ then $i<j$.

Note that this notion is slightly weaker than the standard form.
For instance $y_{100}y_{101}$ is in standard form
but not $y_{101}y_{100}$, although the latter is in weak standard form.
\end{defn}
The definition implies the following elementary fact.
\begin{lem}
A weak standard form can be converted into a standard form by the application of a finite sequence of commuting relations,
which are relations $(4)$ of $R$.
\end{lem}

\begin{defn}\label{calculationexp}

Associated with a (weak) standard form
$fy_{s_1}^{t_1}...y_{s_n}^{t_n}$ and a sequence $\sigma\in 2^{\omega}$
is the notion of a \emph{calculation}.
This is an infinite string in letters $y,y^{-1},\seq{0},\seq{1}$.
The evaluation of $fy_{s_1}^{t_1}...y_{s_n}^{t_n}$ on $\sigma$ comprises of a 
prefix replacement that is determined by the transformation $f\in F$
followed by an infinite sequence of
applications of the transformations described by the percolating elements.
The latter is encoded as an infinite string with letters $y,y^{-1},0,1$ and denoted as the calculation of
$y_{s_1}^{t_1}...y_{s_n}^{t_n}$ on $\sigma\cdot f$.
The calculation is equipped with the following substitutions:
\[
y\seq{00} \rightarrow \seq{0}y 
\qquad
y\seq{01} \rightarrow \seq{10}y^{-1}
\qquad
y\seq{1} \rightarrow \seq{11}y
\]
\[
y^{-1} \seq{0} \rightarrow \seq{00}y^{-1}
\qquad
y^{-1} \seq{10} \rightarrow \seq{01}y
\qquad
y^{-1} \seq{11} \rightarrow \seq{1}y^{-1}
\]
\end{defn}
For example, consider the action of the word $x^{-1}y_{100}^{-1}y_{10}$
on the binary sequence $11001111...$.
First, we have the prefix replacement map $$(11001111...)\cdot x^{-1}= 1001111...$$
Then the resulting calculation string is $$10y0y^{-1}1111...$$
The \emph{output string} of the evaluation of the word on the binary string
is the limit of the strings obtained from performing these substitutions.
So $1010111...$ is the output string of the calculation $10y0y^{-1}1111...$.

The calculation of a weak standard form $fy_{s_1}^{t_1}...y_{s_n}^{t_n}$
on an infinite binary sequence $\sigma$ is the same as the calculation
of $y_{s_1}^{t_1}...y_{s_n}^{t_n}$ on $\sigma\cdot f$.
The set of all strings that encode calculations is a subset of $\{0,1,y,y^{-1}\}^{\mathbf{N}}$
consisting of elements with the property that there are only finitely many occurrences of $y^{\pm}$.
Note that the calculation of a weak standard form on a binary sequence $\sigma$ does not change if we perform a sequence of commuting moves (relations $(4)$ of $R$) on the weak standard form.

The percolating element $y$ can be expressed as a finite state transducer, which is a certain generalisation of a finite state automaton.
The difference between transducers and automata is that in the case of transducers, not every letter in the alphabet is read.
Our transducer is diagrammatically represented in Figure $1$ above.
\begin{figure}\label{TransducerDiagram}
\begin{center}
\begin{tikzpicture}[shorten >=1pt,node distance=2cm,on grid,auto] 
   \node[state] (y)   {$y$}; 
   \node[state] (yy) [right=of y] {$y^{-1}$}; 
    \path[->] 
    (y) edge  [loop above] node {$00\mid  0$} (y);
    \path[->]  (y)   edge  [loop below] node {$1\mid 11$} (y);
   \path[->] (y) edge  [bend left, above]node  {$01\mid 10$} (yy);
     \path[->]     (yy) edge [bend left, below] node{$10\mid 01$} (y);
   \path[->] (yy) edge [loop above]  node {$0\mid 00$} (yy) ;
        \path[->] (yy) edge  [loop below]  node {$11\mid 1$} (yy);
\end{tikzpicture}
\end{center}
\caption{The transducer $y$} \label{fig}
\end{figure}

In this diagram, each edge is labelled by a pair of finite strings of the form $\sigma\mid \tau$.
Here $\sigma$ represents the input string, and $\tau$ represents the output string.
Note that the set of input strings for each state forms a prefix set.
For $y$, this is $\{00,01,1\}$ and for $y^{-1}$ this is $\{0,10,11\}$.
The same holds for the set of output strings at a given state.

\begin{defn}
In a calculation, a \emph{potential cancellation} is a substring of the form $$y^{t_1}\sigma y^{t_2}\qquad \sigma\in 2^{<\omega},t_1,t_2\in \{\pm1\}$$ satisfying that upon performing a finite set of
substitutions on this substring we encounter a substring of the form $yy^{-1}$ or $y^{-1}y$.
\end{defn}

Potential cancellations can be identified in the following way using the transducer diagram above.
For example, consider the case when our string is of the form $y \sigma y$. 
Then we regard $y$ in the diagram as a start state, $y^{-1}$ as the end state, and $\sigma$ as the word we read.
The string is a potential cancellation if and only if $\sigma$ is read from the start state and accepted to the end state.

\begin{example}
The string $y10001y$ is a potential cancellation, since $$y10001y\to 11y0001y\to 110y01y\to 11010y^{-1}y$$
The strings $y000110y$ and $y000y^{-1}$ are not potential cancellations, since $$y000110y\to 0y0110y\to 010y^{-1}10y\to 01001yy$$
$$y000y^{-1}\to 0y0y^{-1}$$
and $0y0y^{-1}$ does not admit any further moves.
\end{example}

\begin{defn}\label{exponentofcalculation}
When a calculation has no potential cancellations, we say that it has
\emph{exponent} $n$ if $n$ is the number of occurrences of the symbols $y^{\pm}$.
For example, there is no potential cancellation in the calculation $10y0y^{-1}1111...$, and the exponent
is $2$.
\end{defn} 

The following was proved in \cite{LodhaMoore}. (Lemma $5.9$.)

\begin{lem} \label{potcanlemma}
Suppose that $\Lambda$ is in $\{0,1,y,y^{-1}\}^{\Nbb}$ and
contains no potential cancellations.
Then advancing any occurrence of a $y^{\pm}$, by applying a substitution move from the list described in Definition \ref{calculationexp}, results in a word with no
potential cancellations.
\end{lem}

It is also natural to consider finite strings in letters $y,y^{-1},0,1$,
equipped with the same substitutions as above.
Of course, for such strings one may not be able to advance the rightmost
occurrence of $y^{\pm 1}$.
However, such strings will be useful to consider in the following context.
Let $fy_{s_1}^{t_1}...y_{s_n}^{t_n}$ be a standard form
and let $u$ be a finite binary sequence such that $f$ acts on $u$
and $u\cdot f$ dominates $s_1,...,s_n$.
Given any infinite binary sequence $\psi$,
we know that $u\psi\cdot f=(u\cdot f)\psi$. 
Also, the calculation
of $fy_{s_1}^{t_1}...y_{s_n}^{t_n}$
on $u\psi$, which equals the calculation of 
$y_{s_1}^{t_1}...y_{s_n}^{t_n}$ on $(u\cdot f)\psi$,
contains a tail $\psi$.
Upon deleting this tail from this calculation,
we obtain the calculation of $fy_{s_1}^{t_1}...y_{s_n}^{t_n}$
on $u$.
Note that the choice of the sequence $\psi$ does not affect the definition of this calculation.
We denote the set of all such strings as $\{0,1,y,y^{-1}\}^{<\Nbb}$.
The following lemma was proved in \cite{LodhaMoore} (Lemma $5.10$.)

\begin{lem}\label{ulemma}
Suppose that $\Lambda$ is in $\{0,1,y,y^{-1}\}^{<\Nbb}$ and
contains no potential cancellations, and contains $n$ occurrences of $y^{\pm}$, and hence has exponent $n$. 
Then there is a finite binary sequence $u$ such that the following holds.
We can perform a sequence of substitutions on the calculation $\Lambda u$ 
to obtain a calculation of the form $v y^n$, where $v$ is a finite binary sequence.
\end{lem}

Note that the notion of potential cancellation for a calculation extends naturally to an analogous notion for (weak) standard forms.

\begin{defn}\label{potcandefn}
(Potential cancellation)
A (weak) standard form $fy_{s_1}^{t_1}...y_{s_n}^{t_n}$ is said to have a 
\emph{potential cancellation} if
there is an infinite binary sequence $\tau$ such that the calculation of 
$y_{s_{1}}^{t_1}...y_{s_n}^{t_n}$ on $\tau$
contains a potential cancellation. 
\end{defn}

We now describe a family of basic operations involving standard forms.
Notice that each of these substitutions corresponds either to a relation in $R$ or to
a group theoretic identity.

\begin{defn}\label{moves}
The following manipulations of $S$-words that emerge from the relations in $R$ will be used throughout the article and denoted as \emph{moves}.
\begin{enumerate}
 \item (Rearranging move)  $y_t^i x_s^{\pm1} \rightarrow x_s^{\pm1} y_{t\cdot x_s^{\pm1}}^i$
 where $s,t\in 2^{<\Nbb}$ are such that $t.x_s^{\pm 1}$ is defined.
 \item (Expansion move) $y_s \rightarrow x_s y_{s\seq{0}} y_{s\seq{10}}^{-1} y_{s\seq{11}}$
 and $y_s^{-1} \rightarrow x_s^{-1} y_{s\seq{00}}^{-1} y_{s\seq{01}} y_{s\seq{1}}^{-1}$
 where $s\in 2^{<\Nbb}$.
 \item (Commuting move) $y_u y_v \leftrightarrow y_v y_u$ 
 where $u,v\in 2^{\Nbb}$ and $u,v$ are independent.
 \item (Cancellation move) Delete an occurrence of $y^i_s y^{-i}_s$.
 \item (ER moves)
This is a combination of moves applied on (weak) standard forms, and is called the \emph{expansion followed by rearrangement}, or \emph{ER move}.
Let $$f(y_{s_1}^{t_1}....y_{s_n}^{t_n})y_u^v(y_{p_1}^{q_1}...y_{p_m}^{q_m})$$
be a (weak) standard form such that either: 
\begin{enumerate}
\item $v=1$ and $x_u$ acts on each sequence in the set $\{s_1,...,s_n\}$.
\item $v=-1$ and $x_u^{-1}$ acts on each sequence in the set $\{s_1,...,s_n\}$.
\end{enumerate}
Then the two step move in either case is defined as follows:

\begin{enumerate}

\item $$f(y_{s_1}^{t_1}....y_{s_n}^{t_n})y_u(y_{p_1}^{q_1}...y_{p_m}^{q_m})$$ $$\to f(y_{s_1}^{t_1}....y_{s_n}^{t_n})(x_uy_{u0}y_{u10}^{-1}y_{u11})(y_{p_1}^{q_1}...y_{p_m}^{q_m})$$
$$\to  fx_{u}(y_{s_1\cdot x_u}^{t_1}....y_{s_n\cdot x_u}^{t_n})(y_{u0}y_{u10}^{-1}y_{u11})(y_{p_1}^{q_1}...y_{p_m}^{q_m})$$

\item $$f(y_{s_1}^{t_1}....y_{s_n}^{t_n})y_u^{-1}(y_{p_1}^{q_1}...y_{p_m}^{q_m})$$ $$\to f(y_{s_1}^{t_1}....y_{s_n}^{t_n})(x_u^{-1}y_{u00}^{-1}y_{u01}y_{u1}^{-1})(y_{p_1}^{q_1}...y_{p_m}^{q_m})$$
$$\to  fx_{u}^{-1}(y_{s_1\cdot x_u^{-1}}^{t_1}....y_{s_n\cdot x_u^{-1}}^{t_n})(y_{u00}^{-1}y_{u01}y_{u1}^{-1})(y_{p_1}^{q_1}...y_{p_m}^{q_m})$$

\end{enumerate}

\end{enumerate}
\end{defn}

Note that performing any of the above moves does not change the group element that the word represents.
The following Lemma was proved in \cite{LodhaMoore}.

\begin{lem}\label{stdform}
If $W$ is any $S$-word and $l \in \Nbb$, then $W$ can be converted into a standard form $f\lambda$ using the relations in $R$,
so that $f\lambda$ has depth at least $l$.
Moreover, each combinatorial manipulation performed in this process is one of the moves $(1)-(4)$ described in Definition \ref{moves}.
\end{lem}

Moreover, the following holds, and is an easy exercise which follows from the definitions.

\begin{lem}
If we apply a finite sequence of ER moves on a weak standard form, then the resulting word is also a weak standard form.
\end{lem}

ER moves shall be used frequently in the proofs in the paper.
The percolating elements $y_{s0}, y_{s10}^{-1}, y_{s11}$ 
obtained upon performing an ER move on $y_s$ are called \emph{offsprings} of $y_s$.
Similarly, the elements $y_{s00}^{-1},y_{s01},y_{s1}^{-1}$ obtained upon performing an ER move
on $y_s^{-1}$ are offsprings of $y_s^{-1}$. 
If we perform a sequence of such moves, we will also refer to offsprings of offsprings as offsprings themselves.
For instance, consider the move $y_s\to x_s y_{s0}y_{s10}^{-1}y_{s11}$ produces offsprings $y_{s0},y_{s10}^{-1},y_{s11}$
of $y_s$. 
Now upon performing a subsequent ER move on $y_{s10}^{-1}$, we obtain
$$x_s y_{s0} (x_{s10}^{-1}y_{s1000}^{-1}y_{s1001}y_{s101}^{-1}) y_{s11}\to x_s x_{s10}^{-1} y_{s0} (y_{s1000}^{-1}y_{s1001}y_{s101}^{-1}) y_{s11}$$
So we say that $$y_{s0,} y_{s1000}^{-1},y_{s1001},y_{s101}^{-1}, y_{s11}$$ are offsprings of $y_s$.

\begin{lem}\label{potcanstd}
Let $f\lambda_1$ be a weak standard form which does not have a potential cancellation.
Let $g \lambda_2$ be a weak standard form obtained by performing a sequence of ER moves on 
$f\lambda_1$.
Then $g\lambda_2$ does not have a potential cancellation.
\end{lem}

\begin{proof}
Let $\lambda_1=y_{s_1}^{t_1}...y_{s_n}^{t_n}$,
such that $t_i\in \{1,-1\}$ (for notational convenience).
Since the existence of potential cancellations in $fy_{s_1}^{t_1}...y_{s_n}^{t_n}$ is not affected by $f\in F$, we assume that this is the empty word.
Assume that $x_{s_i}$ acts on each $s_1,...,s_{i-1}$, and that $t_i=1$.
The case when $t_i=-1$ is completely analogous.
Applying an ER move, we obtain 
$$(y_{s_1}^{t_1}...y_{s_{i-1}}^{t_{i-1}})(x_{s_i}y_{s_i0}y_{s_i10}^{-1}y_{s_i11})(y_{s_{i+1}}^{t_{i+1}}...y_{s_n}^{t_n})$$
$$= (x_{s_i}) (y_{s_1'}^{t_1}...y_{s_{i-1}'}^{t_{i-1}})(y_{s_i0}y_{s_i10}^{-1}y_{s_i11})(y_{s_{i+1}}^{t_{i+1}}...y_{s_n}^{t_n})$$
where $s_j'=s_j\cdot x_{s_i}$ for $1\leq j\leq i-1$.

Now assume that we have introduced a potential cancellation after performing this move.
Let $\tau$ be an infinite binary sequence for which the associated calculation $\Lambda$ of $$(y_{s_1'}^{t_1}...y_{s_{i-1}'}^{t_{i-1}})(y_{s_i0}y_{s_i10}^{-1}y_{s_i11})(y_{s_{i+1}}^{t_{i+1}}...y_{s_n}^{t_n})$$
contains a potential cancellation.
It must be the case that $\tau$ contains either $s_i0, s_i10$, or $s_i11$ as a prefix,
since for all other sequences the calculation remains the same before and after the move.

Now let $\Lambda'$ be the calculation produced by the evaluation of $y_{s_1}^{t_1}...y_{s_n}^{t_n}$ on $\tau\cdot x_{s_i}^{-1}$.
The calculation $\Lambda$ is in fact obtained from $\Lambda'$ by advancing a symbol $y$.
By our hypothesis, there is no potential cancellation in $\Lambda'$.
So by Lemma \ref{potcanlemma} there cannot be a potential cancellation in $\Lambda$.
Hence we obtain a contradiction.
\end{proof}

The action of $F$ on infinite binary sequences preserves tail equivalence, yet the elements 
$y_s^n$ do not.
This is a fundamental and important distinction between the groups $F$ and $G$. 
The advantage of standard forms without potential cancellations is that we can identify precisely where the relation is not preserved.
Recall that the \emph{support} of a homeomorphism $\phi$ is the set of elements in the domain that are moved by the homeomorphism.
We denote this as $Supp(\phi)$.

\begin{defn}\label{denselymixing}
The action of an element of $G$ on an open subset $U\subset 2^{\mathbf{N}}$ is said to be \emph{densely mixing},
if there is a dense subset $V$ of $U$ such that the action of the element on any sequence in $V$ does not preserve tail equivalence.
\end{defn}

\begin{lem}\label{taileq}
Let $fy_{s_1}^{t_1}...y_{s_n}^{t_n}$ be a standard form that does not contain potential cancellations.
Let $U\subset 2^{\mathbf{N}}$ be the support of $y_{s_1}^{t_1}...y_{s_n}^{t_n}$.
\begin{enumerate}
\item $fy_{s_1}^{t_1}...y_{s_n}^{t_n}$ is densely mixing on $U\cdot f^{-1}$.
\item $fy_{s_1}^{t_1}...y_{s_n}^{t_n}$ preserves tail equivalence on $2^{\mathbf{N}}\setminus (U\cdot f^{-1})$.
\end{enumerate}
\end{lem}

\begin{proof}
Since $$U\cdot f^{-1}\cdot f=U=Supp(y_{s_1}^{t_1}...y_{s_n}^{t_n})$$
$fy_{s_1}^{t_1}...y_{s_n}^{t_n}$ acts like an element of $F$ on the complement of $U\cdot f^{-1}$, hence it preserves tail equivalence on this set.
For simplicity, we may assume that $f$ is trivial, and show that the action of $y_{s_1}^{t_1}...y_{s_n}^{t_n}$ on $U$ is densely mixing.

Let $U'$ be an open subset of $U$.
Let $\tau$ be a finite binary sequence such that the following holds:
\begin{enumerate}
\item $[\tau 0^{\infty}, \tau 1^{\infty}]\subset U'$ 
\item $\tau$ dominates $s_1,...,s_n$.
\end{enumerate}
The associated calculation $\Lambda$ of $y_{s_1}^{t_1}...y_{s_n}^{t_n}$ on $\tau$ does not contain potential cancellations,
by our hypothesis.
By Lemma \ref{ulemma}, there is a finite binary sequence $u$ such that one can perform substitutions on $\Lambda u$
to obtain $v y^m$ where $v$ is a finite binary sequence and $m\neq 0$.
We assume that $m>0$, the other case is analogous.  
It follows that the associated calculation of $y_{s_1}^{t_1}...y_{s_n}^{t_n}$ on $\tau u 0^{2^m}10^{2^m}1...$
equals $$vy^m0^{2^m}10^{2^m}1...$$
This calculation produces the output $$v 01^{2^m} 0 1^{2^m}...$$ which is not tail equivalent to the input.
\end{proof}

\section{Normal Forms}

For our group presentation $G=\langle S,R\rangle$,
we shall describe a \emph{normal form}- a unique, canonical choice of word for each group element together with a procedure that converts a given word into such a word.
The normal form described in this section (or slightly weaker variants of it) will be used throughout the paper,
in particular in the proof that the complex $X$ is contractible.
However, aside from the goals of this paper, 
this normal form is a useful tool to study the group
and is therefore of independent interest.
The reader is encouraged to develop a technical familiarity of the moves described in
\ref{moves}, before reading the proofs in this section.

It is useful to imagine a weak standard form $y_{s_1}^{t_1}...y_{s_n}^{t_n}$ as a 
set of decorations on the infinite rooted binary tree.
This is done by means of adding integer labels $t_i$ on the nodes $s_i$. 
Using such a picture, one can read off a calculation by reading the labels along the infinite path formed
by the infinite binary sequence on which the calculation is being performed.
The two central notions in this section are \emph{potential cancellation} and \emph{potential contraction}.
In particular, our \emph{normal form} will be a standard form $fy_{s_1}^{t_1}...y_{s_n}^{t_n}$
such that $f$ is in the usual normal form for Thompson's group $F$, and $y_{s_1}^{t_1}...y_{s_n}^{t_n}$
does not contain any potential cancellations or potential contractions.

We have already defined the notion of potential cancellations.
We shall now provide a tautological reformulation of the definition that will be useful in the proofs in this section.

\begin{defn}
For a weak standard form $fy_{s_1}^{t_1}...y_{s_n}^{t_n}$, 
we say that the pair $y_{s_j}^{t_j}, y_{s_i}^{t_i}$ is \emph{adjacent}
if $s_i\subset s_j$ and for each sequence $u$ satisfying $s_i\subset u\subset s_j$, it holds that
$u\notin \{s_1,...,s_n\}$.
This means that if we walk along the geodesic in the tree connecting $s_i$ and $s_j$,
we do not encounter any labels besides those at the first and the last nodes.

Such an adjacent pair is said to be a \emph{potential cancellation}
if $$y^{t_i} \sigma y^{t_j}\qquad \text{ for }s_i\sigma=s_j$$
is a potential cancellation string.
\end{defn}

The following Lemma is an immediate consequence of the definitions.

\begin{lem}
A weak standard form contains a potential cancellation if and only if it contains an adjacent pair that is a potential cancellation.
\end{lem}


\begin{defn}\label{potentialcontraction}
A weak standard form is said to contain a \emph{potential contraction} 
if either of the following holds.
\begin{enumerate}
\item It contains an occurrence of a subword of the form 
$y_{s0}y_{s10}^{-1}y_{s11}$ but no occurrences of $y_{s1}^{\pm}$.
\item It contains an occurrence of a subword of the form 
$y_{s00}^{-1}y_{s01}y_{s1}^{-1}$ but no occurrences of $y_{s0}^{\pm}$.
\end{enumerate}
If a weak standard form $fy_{s_1}^{t_1}...y_{s_n}^{t_n}$ contains a potential contraction of the form 
$y_{s0}y_{s10}^{-1}y_{s11}$ (in particular, no occurrences of $y_{s1}^{\pm}$), we can replace 
the subword $y_{s0}y_{s10}^{-1}y_{s11}$ with the word
$x_{s}^{-1}y_s$.
If $y_u^v$ is a percolating element occurring to the left of $x_s^{-1}$ in the resulting word,
then either $u,s$ are independent or $s\subset u$.  
Since there are no occurrences of $y_{s1}^{\pm}$,
the element $x_s^{-1}$ must act on $u$. 
So we apply rearranging substitutions to move the $x_{s}^{-1}$ to 
the left of all percolating elements.
The resulting word is a weak standard form.
We call this a \emph{contraction move}.
The contraction move for the other case, i.e. subwords of the form 
$y_{s00}^{-1}y_{s01}y_{s1}^{-1}$ (but no occurrences of $y_{s0}^{\pm}$), is defined in a similar way.
\end{defn}

Recall that any $S$-word can be converted into a standard form using a sequence of moves thanks to Lemma \ref{stdform}.
The procedure for converting a standard form $fy_{s_1}^{t_1}...y_{s_n}^{t_n}$ into a normal form using moves involves three steps:
\begin{enumerate}
\item {\bf Step 1:} Using a sequence of moves, convert $fy_{s_1}^{t_1}...y_{s_n}^{t_n}$ into a weak standard form $f'y_{u_1}^{v_1}...y_{u_m}^{v_m}$
that does not contain potential cancellations.
\item {\bf Step 2}: Using a sequence of contraction moves, convert $f'y_{u_1}^{v_1}...y_{u_m}^{v_m}$ into a weak standard form 
which does not contain potential contractions. 
Moreover, the moves performed do not introduce any potential cancellations.
Finally, using commuting moves, convert this weak standard form into a standard form $f'' y_{r_1}^{q_1}...y_{r_k}^{q_k}$.
\item {\bf Step 3} Convert $f''$ into a word $g$ which is in $F$-normal form as described in Theorem \ref{Fnormalform}.
\end{enumerate}
{\bf Output}: A standard form $g y_{r_1}^{q_1}...y_{r_k}^{q_k}$ such that $g$ is in $F$-normal form as in \ref{Fnormalform},
and $y_{r_1}^{q_1}...y_{r_k}^{q_k}$ does not contain any potential cancellations or potential contractions.
This word shall be the unique normal form as stated in the main theorem below.

\begin{thm}\label{normalform}
 For each element in $G$, there is a unique word $fy_{s_1}^{t_1}...y_{s_n}^{t_n}$ such that 
\begin{enumerate}
 \item $fy_{s_1}^{t_1}...y_{s_n}^{t_n}$ is a standard form with no potential contractions or potential cancellations.
\item $f\in F$ is an $X$-word in $F$-normal form in the sense of \ref{Fnormalform}.
\end{enumerate}
Moreover, given any word that represents this element in $G$, we can convert it into the word $fy_{s_1}^{t_1}...y_{s_n}^{t_n}$ using a sequence of moves
from \ref{moves}.
\end{thm}

The goal of the rest of this section is to describe Steps $1,2$ of the procedure and then prove the main theorem.
For Step $3$ we refer the reader to \cite{Cannon}. 

\subsection{Step $1$ of the procedure}

First we make a fundamental observation concerning potential cancellations
in standard forms.
This will provide a part of the reduction in Step $1$.

\begin{lem}\label{simplepotcan}
Let $(y_{s_1}^{t_1}...y_{s_n}^{t_n})y_s^t$ be a standard form such that the following holds.
\begin{enumerate}
\item $t_1,...,t_n,t\in \{1,-1\}$.
\item The sequences $s_1,...,s_n$ are independent.
\item Each pair $y_{s_i}^{t_i}, y_s^t$ is an adjacent pair 
that is a potential cancellation.
\end{enumerate}
Then we can perform a sequence of ER and cancellation moves on this word to produce a standard form $f y_{u_1}^{v_1}...y_{u_m}^{v_m}$
such that $u_1,...,u_m$ are independent and $v_1,...,v_m\in \{1,-1\}$.
\end{lem}

\begin{proof}
We assume that $t=1$, the other case is completely analogous.
If $y_s=y_{s_i}^{-t_i}$ for some $1\leq i\leq n$, we simply reduce the word.
Assume that $y_s\neq y_{s_i}^{-t_i}$. Performing an expansion move on $y_s$ produces a word $x_sy_{s0}y_{s10}^{-1}y_{s11}$.
We claim that $x_s$ acts on $s_1,...,s_n$.
Note that the only node in the subtree rooted at $s$ that $x_s$ does not act on is $s0$.
However, $s0\notin \{s_1,...,s_n\}$, since the word $y_{s0}^{t} y_{s}$ does not contain a potential cancellation for any $t\in \{\pm 1\}$.
This proves the claim.


Applying the rearrangement move, we obtain
$$x_s (y_{s_1'}^{t_1}...y_{s_n'}^{t_n})(y_{s0}y_{s10}^{-1}y_{s11})$$
where $s_i'=s_i\cdot x_s$.

Let $\sigma$ be whichever of $s0,s10$ or $s11$ is an initial segment of $s_i'$,
and let $y_{\sigma}^{\tau}$ be the corresponding percolating element.
Note that the pair $y_{s_i'}^{t_i}, y_{\sigma}^{\tau}$ will be an adjacent pair with a potential cancellation in the word.
Also, note that the distance in the tree between $s_i'$ and $\sigma$
is smaller than the distance between $s_i$ and $s$.
In this way, we can continue to apply ER moves
on $y_{s0}, y_{s10}^{-1}, y_{s11}$ and their offsprings, until this distance is $0$ for each such pair.
Then we can perform the required cancellations.
At this stage, any remaining percolating elements $y_{u_1}^{v_1},...,y_{u_n}^{v_n}$
are offsprings of $y_s$ and hence have the property that $u_1,...,u_n$ are pairwise independent.
\end{proof}

Now we describe the final part of the first step.

\begin{lem}\label{nopotcan}
Any weak standard form can be converted to a weak standard form with no potential 
cancellations using the relations.
\end{lem}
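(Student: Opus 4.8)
The plan is to give an algorithm that, starting from an arbitrary standard form $f\lambda = fy_{s_1}^{t_1}\cdots y_{s_n}^{t_n}$ with a potential cancellation, produces a standard form representing the same group element but whose total ``potential cancellation complexity'' has strictly decreased; iterating this terminates. First I would reduce to the $Y$-word part: since $f \in F$ is fixed and potential cancellations are detected purely by running calculations of $\lambda$ on sequences $\tau$, the issue is entirely about $\lambda$, so it suffices to rewrite $\lambda$ (keeping the result in standard form, possibly absorbing a new $X$-word prefix into $f$ via Rearranging substitutions). Next I would observe that a potential cancellation in the calculation of $\lambda$ on some $\tau$ must, by the local nature of the substitution rules $y\seq{00}\to\seq{0}y$, $y\seq{01}\to\seq{10}y^{-1}$, $y\seq{1}\to\seq{11}y$ and their inverses, be ``witnessed'' by a neighboring pair $y_{s_i}^{t_i}, y_{s_j}^{t_j}$ with $s_i \subset s_j$ whose own calculation (of $y_{s_j}^{t_j}y_{s_i}^{t_i}$ on a sequence extending $s_j$) already contains the potential cancellation. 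This is the key structural reduction: it lets me localize the problem to a single pair, and I would prove it by tracking which percolating element each $y^{\pm}$ symbol in a calculation descends from and noting that a $yy^{-1}$ or $y^{-1}y$ collision can only arise between symbols coming from two $y_s$'s whose indices are nested with nothing of the family in between.

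The heart of the argument is then the local move: given a neighboring pair with a potential cancellation, use Expansion repeatedly on $y_{s_i}^{t_i}$ (the one with the shorter index) to push it ``down past'' $s_j$. Concretely, $y_{s_i} = x_{s_i} y_{s_i\seq{0}} y_{s_i\seq{10}}^{-1} y_{s_i\seq{11}}$; since $s_i \subset s_j$, exactly one of the new indices is again an initial segment of (or equal to) $s_j$, and after finitely many expansions — driven along the finite path from $s_i$ to $s_j$ — I arrive at an occurrence of $y_{s_j}^{\pm 1}$ adjacent (after Rearranging the produced $x$'s to the left into $f$, and Commuting the sibling $y$'s with incompatible index out of the way) to $y_{s_j}^{t_j}$. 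At that point the potential cancellation has become an \emph{actual} adjacency $y_{s_j}^{m} y_{s_j}^{t_j}$ with $m$ and $t_j$ of opposite sign, so substitutions $(5)$ and $(6)$ (Cancellation) remove the overlapping $y_{s_j}^{\pm}$ symbols. I would then re-sort into standard form and argue via a suitable well-founded measure — e.g. the lexicographically-ordered multiset of lengths $\length{s_i}$ of indices involved in potential cancellations, or more simply the number of neighboring pairs containing a potential cancellation counted with the depth at which they occur — that this step strictly decreases it, so the process halts at a standard form with no potential cancellations.

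The main obstacle I anticipate is bookkeeping in the local move: when I expand $y_{s_i}$, the Expansion relation also introduces the ``sibling'' percolating elements (e.g. $y_{s_i\seq{11}}$ when $s_j$ extends $s_i\seq{0}$), and I must check that (a) these siblings have indices incompatible with everything relevant so they can be Commuted past it without creating \emph{new} potential cancellations, and (b) re-establishing standard form afterwards — which requires moving the freshly created $x_{s_i}$, $x_{s_i\seq{0}}$, etc.\ leftward through the remaining $y$'s via Rearranging — does not reintroduce the cancellation I just removed or spawn others of equal or greater complexity. Handling the sign of the exponent correctly along the branch $\seq{01}$ (where $y\seq{01}\to\seq{10}y^{-1}$ flips $y$ to $y^{-1}$) is the delicate point that makes ``potential cancellation'' the right invariant rather than something cruder, and verifying the strict decrease of the chosen measure through all these rearrangements is where the real work lies; everything else is routine manipulation with the relations in $R$.
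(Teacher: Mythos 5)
Your localization step (a potential cancellation is always witnessed by a neighboring pair) and your local move (expand the shorter-index letter down toward its partner, commute siblings away, move the produced $x$'s left, then cancel) are exactly the engine of the paper's proof. What is missing is the part you yourself flag as ``where the real work lies'': the termination argument. You never define the well-founded measure, and the candidates you gesture at do not obviously work. Counting neighboring pairs with potential cancellations can \emph{increase} under your local move: expanding $y_{s_i}$ creates sibling letters (e.g.\ $y_{s_i\seq{11}}$, $y_{s_i\seq{10}}^{-1}$) which may form new potential-cancellation pairs with other letters of the word whose indices extend them, and pushing the new $x_{s_i}, x_{s_i\seq{0}},\dots$ to the front rewrites the indices of every letter to their left, so ``old'' pairs are not literally preserved either. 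A multiset-of-depths ordering could plausibly be made to work (new pairs have strictly longer shorter-index), but that is precisely the delicate verification you have deferred, so as written the proof does not close. There is also a small inaccuracy in the local move: if $s_j=s_i\seq{1}$ (resp.\ $s_i\seq{0}$ for negative exponent) then no child of the expansion is an initial segment of $s_j$ — two children properly extend it — and the cancellation is resolved by a separate case analysis rather than by ``marching down the path''.

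The paper avoids the global measure entirely by a different organization: induction on the number of percolating letters. It first normalizes the length-$(n-1)$ prefix (using the depth-raising Lemmas 4.4 and 4.9 of \cite{LodhaMoore}) to a cancellation-free standard form of depth greater than $|s_n|$, so that after appending $y_{s_n}^{t_n}$ every potential cancellation involves the single new letter; it then splits the prefix into the letters that do and do not pair with $y_{s_n}^{t_n}$, performs your expansion-and-cancel move on $y_{s_n}^{t_n}$ alone, and the cancellation-freeness of the output follows from that splitting rather than from a decreasing measure. If you want to salvage your route, either adopt this inductive scheme or carry out, in detail, the multiset-ordering termination proof together with the check that the rearranging substitutions preserve/translate the relevant pairs; without one of these the argument has a genuine gap.
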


\begin{proof}
We prove this by induction on the number of percolating elements in the weak standard form.
The base case is trivial.
Let this be true for $n-1$.
Let $fy_{s_1}^{t_1}...y_{s_n}^{t_n}$ be a weak standard form such that $t_i\in \{1,-1\}$ (for convenience of notation).

By the inductive hypothesis it follows that we can convert 
$fy_{s_1}^{t_1}...y_{s_{n-1}}^{t_{n-1}}$ into a weak standard form
with no potential cancellations using the relations.
Moreover, from applying Lemmas \ref{stdform} and \ref{potcanstd} 
it follows that this can further be converted into a weak standard form $hy_{p_1}^{q_1}...y_{p_m}^{q_m}$ 
with no potential cancellations and depth strictly larger than $|s_n|$.
It follows that $hy_{p_1}^{q_1}...y_{p_m}^{q_m}y_{s_n}^{t_n}$ is a weak standard form,
since none of the sequences $p_i$ are initial segments of $s_n$.

Any potential cancellation is witnessed by an adjacent pair of the form
$y_{p_i}^{q_i},y_{s_n}^{t_n}$ for some $1\leq i\leq m$.
We make a record of all such adjacent pairs.
Using commuting moves we convert $hy_{p_1}^{q_1}...y_{p_m}^{q_m}y_{s_n}^{t_n}$ into a weak standard form 
$$h(y_{r_1}^{k_1}...y_{r_l}^{k_l})(y_{u_1}^{v_1}...y_{u_o}^{v_o})y_{s_n}^{t_n}$$ such that 
\begin{enumerate}
\item The word $h(y_{r_1}^{k_1}...y_{r_l}^{k_l})(y_{u_1}^{v_1}...y_{u_o}^{v_o})y_{s_n}^{t_n}$
is a weak standard form and $v_1,...,v_o\in \{1,-1\}$.
\item Each pair $y_{u_j}^{v_j},y_{s_n}^{t_n}$
is an adjacent pair that is a potential cancellation.
\item No adjacent pair of the form $y_{r_i}^{k_i},y_{s_n}^{t_n}$ or $y_{r_i}^{k_i},y_{u_j}^{v_j}$
is a potential cancellation.
\end{enumerate}

By Lemma \ref{simplepotcan}, we can perform a sequence of ER and cancellation substitutions on the word $(y_{u_1}^{v_1}...y_{u_o}^{v_o}) y_{s_n}^{t_n}$
to obtain a weak standard form with no potential cancellations.

Now we apply a sequence of ER moves to convert the weak standard form $hy_{r_1}^{k_1}...y_{r_l}^{k_l}$
into a weak standard form $h' \lambda_1$ of large depth so that the following holds. 
In the reduction process applied to $(y_{u_1}^{v_1}...y_{u_o}^{v_o}) y_{s_n}^{t_n}$ in the previous paragraph,
for each expansion move $y_s\to x_sy_{s0}y_{s10}^{-1}y_{s11}$ or $y_s^{-1}\to x_s^{-1}y_{s00}^{-1}y_{s01}y_{s1}^{-1}$,
the elements $x_s,x_s^{-1}$ act on each percolating element of $\lambda_1$.
Note that replacing $$h(y_{r_1}^{k_1}...y_{r_l}^{k_l})(y_{u_1}^{v_1}...y_{u_o}^{v_o})y_{s_n}^{t_n}$$ by $$h'\lambda_1(y_{u_1}^{v_1}...y_{u_o}^{v_o})y_{s_n}^{t_n}$$ does not introduce any new potential cancellations thanks to an application of Lemma \ref{potcanlemma}.

This means that we can perform a sequence of ER moves on the weak standard form $$h'\lambda_1(y_{u_1}^{v_1}...y_{u_o}^{v_o}) y_{s_n}^{t_n}$$
to obtain a weak standard form $$h''\lambda_2(y_{l_1}^{m_1}...y_{l_o}^{m_o})$$
without potential cancellations, and such that $y_{l_1}^{m_1},...,y_{l_o}^{m_o}$ are offsprings of $y_{s_n}^{t_n}$.
After applying a sequence of commutation moves, we obtain a standard form with no potential cancellations.
\end{proof}

\subsection{Step 2 of the procedure}
\begin{lem}\label{nopotcont}
 Any standard form can be converted into a standard form with no potential contractions 
 and no potential cancellations.
\end{lem}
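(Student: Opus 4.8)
The plan is to reduce at once to the case of no potential cancellations, and then to remove potential contractions one at a time while controlling the number of percolating elements. Given an arbitrary standard form $w$, first apply Lemma~\ref{nopotcan} to replace it with a standard form having no potential cancellations, so we may assume $w$ itself has none. We argue by induction on the number $n$ of percolating elements of $w$, writing $w = f y_{s_1}^{t_1}\cdots y_{s_n}^{t_n}$ with all $t_i\in\{1,-1\}$ as in the proof of Lemma~\ref{nopotcan}; the base case is trivial. If $w$ has no potential contraction we are done. Otherwise, by the two symmetric cases of the definition, we may assume $w$ contains a subword $y_{s0}y_{s10}^{-1}y_{s11}$ and no occurrence of $y_{s1}^{\pm}$. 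Apply the corresponding contraction substitution: it replaces the three percolating elements $y_{s0}, y_{s10}^{-1}, y_{s11}$ by the single element $y_s$ (moving the resulting $x_s^{-1}$ leftward past the preceding percolating elements into the $X$-part), and by the definition of the contraction substitution the result $w'$ is again a standard form, now with at most $n-2$ percolating elements.

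The key point is that $w'$ still has no potential cancellations; granting this, $w'$ falls under the induction hypothesis and hence can be converted to a standard form with neither potential cancellations nor potential contractions, which completes the induction. To see the key point, suppose for contradiction that $w'$ has a potential cancellation. By the neighboring-pair criterion used in the proof of Lemma~\ref{nopotcan}, this is witnessed by a neighboring pair of percolating elements of $w'$ whose associated two-letter calculation contains a potential cancellation. The percolating elements of $w'$ are those of $w$ outside the triple, with the subscripts of the ones lying to the left of the deleted triple conjugated by $x_s^{-1}$, together with the new element $y_s$; since conjugation of a subscript by $x_s^{\pm 1}$ corresponds to a prefix replacement in the calculation and so does not create or destroy potential cancellations, and since $w$ had none, the witnessing neighboring pair must involve $y_s$. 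One then analyzes the calculation of the relevant subword $y_q^{\pm}y_s^{\pm}$ (or $y_s^{\pm}y_q^{\pm}$) by unfolding $y_s = x_s y_{s0}y_{s10}^{-1}y_{s11}$ via relation~(5): a potential cancellation here yields a potential cancellation between $y_q$ and one of $y_{s0}, y_{s10}^{-1}, y_{s11}$ in $w$ — here the hypothesis that there is no occurrence of $y_{s1}^{\pm}$ is exactly what guarantees that the unfolded triple is not ``interrupted'' by another percolating element — contradicting that $w$ has no potential cancellations.

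I expect the last step — the calculation bookkeeping showing that a potential cancellation involving the contracted element $y_s$ must already have been present in $w$ — to be the main obstacle. One has to track how the descending symbol $y^{\pm}$ propagates under the substitution rules $y\seq{00}\to\seq{0}y$, $y\seq{01}\to\seq{10}y^{-1}$, $y\seq{1}\to\seq{11}y$ (and their inverses), handle the $x_s^{-1}$-conjugation of the left neighbors with care, and pin down precisely where the absence of $y_{s1}^{\pm}$ is used. Everything else is routine: the number of percolating elements is a nonnegative integer that strictly decreases at each contraction, so the process terminates; the output of each contraction is a standard form by the definition of the contraction substitution; and the two forms of potential contraction in the definition are handled symmetrically. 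It may also be worth recording, for later use in proving uniqueness of the normal form, that the resulting standard form is in fact a minimizer of the number of percolating elements among all representatives of the group element.
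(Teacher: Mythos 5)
Your proposal follows essentially the same route as the paper: first apply Lemma~\ref{nopotcan}, then perform contraction substitutions one at a time, arguing termination and that no potential cancellations are introduced along the way. The paper justifies termination via the well-founded ordering on standard forms from the proof of Lemma 5.6 of \cite{LodhaMoore} and the non-creation of potential cancellations by citing Lemma 5.9 of \cite{LodhaMoore}, whereas you use the (equally valid, simpler) strictly decreasing count of percolating elements and sketch the cancellation-preservation bookkeeping directly; both versions are sound.
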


\begin{proof}
 By Lemma \ref{nopotcan} we can convert our standard form into a 
 standard form with no potential cancellations.
On the resulting standard form, we perform contraction moves (defined in \ref{potentialcontraction}), one by one, 
until no contraction moves can be performed.
This process terminates, since upon performing a contraction substitution we 
obtain a standard form which has fewer occurrences of $y^{\pm}$.
Moreover, thanks to Lemma \ref{potcanlemma} we do not introduce any potential cancellations in this process.
\end{proof}

Before we proceed to prove the main theorem, we shall make a fundamental observation concerning exponents
of calculations.
(Recall the Definition of exponents of calculations from \ref{exponentofcalculation}.)
This provides an invariant that distinguishes different standard forms.

\begin{lem}\label{exponent}
Let $fy_{s_1}^{t_1}...y_{s_n}^{t_n}$ and $gy_{p_1}^{q_1}...y_{p_m}^{q_m}$ be standard forms without potential cancellations
that represent the same group element.
Let $u$ be a finite binary sequence such that $u_1=u\cdot f$ and $u_2=u\cdot g$ are defined and dominate $s_1,...,s_n$ and $p_1,...,p_m$ respectively.
Consider the calculations $\Theta$ of $fy_{s_1}^{t_1}...y_{s_n}^{t_n}$ on $u$ and $\Lambda$ of $gy_{p_1}^{q_1}...y_{p_m}^{q_m}$ on $u$.
Then the exponents of $\Theta$ and $\Lambda$ are the same. 
\end{lem}

\begin{proof}
Assume without loss of generality that the exponent of $\Lambda$ is greater than that of $\Theta$.
Assume that $k>0$ is the exponent of $\Lambda$.
(Recall that the exponent is always nonnegative.)
By Lemma \ref{ulemma}, there is a finite binary sequence $v$ such that
we can perform moves on $\Lambda v$ to obtain a calculation of the form $w y^k$,
where $w$ is a finite binary sequence.

Now consider the infinite binary sequence $v 0^{2^k} 1 0^{2^k} 10^{2^k}1...$.
We know that our standard forms represent the same group element.
So the calculations $$\Theta v 0^{2^k} 1 0^{2^k} 10^{2^k}1...\qquad \Lambda v 0^{2^k} 1 0^{2^k} 10^{2^k}1...$$
produce the same output.
Note that 
$$\Lambda v 0^{2^k} 1 0^{2^k} 10^{2^k}1...= w y^k 0^{2^k} 1 0^{2^k} 10^{2^k}1...=w 0 1^{2^k} 01^{2^k}0 1^{2^{k}}...$$

The calculation $$\Theta v 0^{2^k} 1 0^{2^k} 10^{2^k}1...$$
has fewer than $k$ occurrences of $y^{\pm}$. 
It is a pleasant visual exercise for the reader to show that this calculation
cannot produce an output that is tail equivalent to a sequence of the form
$$w 0 1^{2^k} 01^{2^k}0 1^{2^{k}}...$$
This means that our assumption that the exponents are different must be false.
\end{proof}

{\bf Proof of Theorem \ref{normalform}}

\begin{proof}
From Lemmas \ref{nopotcan} and \ref{nopotcont}, it follows that 
for any given group element we can find a word satisfying the hypothesis of the theorem.
It remains to show that this is unique.
By way of contradiction assume that 
$$fy_{s_1}^{t_1}...y_{s_n}^{t_n}\qquad gy_{p_1}^{q_1}...y_{p_m}^{q_m}$$ are two distinct standard forms 
that satisfy conditions $(1)$ and $(2)$ of the Theorem,
and represent the same element of $G$.
Assume without loss of generality that $p_m\geq s_n$.

There are three cases:
\begin{enumerate}
\item $s_n=p_m$ and either $t_n,q_m<0$ or $t_n,q_m>0$.
\item $s_n=p_m$ and either $t_n<0<q_m$ or $q_m<0<t_n$.
\item $s_n< p_m$.
\end{enumerate}

Case $(1)$: We can perform a cancellation on both sides of the equality, until we land in one of the remaining cases.
Case $(2)$ reduces to case $(3)$ by moving $y_{s_n}^{t_n}$ from one side to the other to obtain 
$$fy_{s_1}^{t_1}...y_{s_{n-1}}^{t_{n-1}}\qquad gy_{p_1}^{q_1}...y_{p_m}^{q_m-t_n}$$

Now we consider case $(3)$. Consider the standard form $(y_{s_1}^{t_1}...y_{s_n}^{t_n})y_{p_m}^{-1}$.
Note that either of the following holds:

\begin{enumerate}
\item There is an infinite binary sequence $\sigma$ such that for each finite $\sigma_1\subset \sigma$,
$p_m\sigma_1\notin \{s_1,...,s_n\}$.
\item There is an adjacent pair of the form $s_i,p_m$ which is not a potential cancellation.
\end{enumerate}

Note that if both are false, then we can perform contraction moves on $y_{s_1}^{t_1}...y_{s_n}^{t_n}$,
thus contradicting our hypothesis.
(This is easy to check in case $(3)$.)
Hence one of the above must hold.
Therefore it follows that there is a finite binary sequence $u$ such that the associated calculation $\Lambda$ of $(y_{s_1}^{t_1}...y_{s_n}^{t_n})y_{p_m}^{-1}$ on $u$ does not contain a potential cancellation.

We consider the following additional calculations:
\begin{enumerate}
\item[(1)] $\Theta$ is the calculation of $gy_{p_1}^{q_1}...y_{p_m}^{q_m-1}$ on $u$.
\item[(2)] $\Lambda'$ is the calculation of $fy_{s_1}^{t_1}...y_{s_n}^{t_n}$ on $u$.
\item[(3)] $\Theta'$ is the calculation of  $gy_{p_1}^{q_1}...y_{p_m}^{q_m}$ on $u$.
\end{enumerate}

According to our assumptions, the calculations $\Lambda, \Lambda', \Theta,\Theta'$
do not contain any potential cancellations.
First observe that:
\begin{enumerate}
\item Since $$f(y_{s_1}^{t_1}...y_{s_n}^{t_n})y_{p_m}^{-1} = g y_{p_1}^{q_1}...y_{p_m}^{q_m-1}$$ from Lemma \ref{exponent} the exponent of $\Lambda$ is the same as that of $\Theta$.
\item Since $$fy_{s_1}^{t_1}...y_{s_n}^{t_n} = g y_{p_1}^{q_1}...y_{p_m}^{q_m}$$ from Lemma \ref{exponent} the exponent of $\Lambda'$ is the same as that of $\Theta'$.
\end{enumerate}

However, it also holds that:
\begin{enumerate}
\item The exponent of $\Lambda$ is greater than that of $\Lambda'$.
\item The exponent of $\Theta$ is less than or equal to that of $\Theta'$.
\end{enumerate}

This is a contradiction.
Therefore the claim must be false, and cases $(3)$ is not possible.
It follows that the words $$y_{s_1}^{t_1}...y_{s_n}^{t_n}\qquad y_{p_1}^{q_1}...y_{p_m}^{q_m}$$
must be equal.
Upon cancellation, we obtain that $f=g$, and since these words are in the normal form for $F$,
they must also be equal.
Therefore, the $S$-words $$fy_{s_1}^{t_1}...y_{s_n}^{t_n}\qquad gy_{p_1}^{q_1}...y_{p_m}^{q_m}$$
are equal.
\end{proof}

The following is an immediate consequence, and provides a normal form representative for right cosets of $F$ in $G$.
We leave the proof as an elementary exercise for the reader.

\begin{cor}\label{normalformcor}
Let $y_{s_1}^{t_1}...y_{s_n}^{t_n}$ be a $Y$-word in normal form.
If $y_{u_1}^{v_1}...y_{u_m}^{v_m}$ is also $Y$-word in normal form such that $$y_{u_1}^{v_1}...y_{u_m}^{v_m}\in F(y_{s_1}^{t_1}...y_{s_n}^{t_n})$$ 
then $y_{u_1}^{v_1}...y_{u_m}^{v_m}$ and $y_{s_1}^{t_1}...y_{s_n}^{t_n}$ are the same words.
\end{cor}

\section{Cluster complexes}

The complex $X$ we construct in this article will be a certain type of CW complex which we shall call a \emph{cluster complex}.
The purpose of this section is to provide the definition of a cluster complex, and discuss some basic formalisms surrounding this notion.
In this section, we shall use the concepts concerning hyperplane arrangements described in Subsection \ref{hyparr} from the Preliminaries.

We consider $\mathbf{R}^n$ with variables $x_1,...,x_n$
that represent coordinates in the usual orthonormal basis.
We consider $\{x_1,...,x_n\}$ as an ordered set.
In particular, we fix the order on this set which is induced by the usual ordering of $\mathbf{N}$ on the indices.

We consider two types of affine hyperplanes in $\mathbf{R}^n$.
\begin{enumerate}
\item An affine hyperplane of $\mathbf{R}^n$ is said to be of type $1$ 
if it is of the form $\{x_i=0\}$ or $\{x_i=1\}$ for some $1\leq i\leq n$.

\item A hyperplane of $\mathbf{R}^n$ is said to be of type $2$ if 
it is of the form
$\{x_i=x_{i+1}\}$ for some $1\leq i\leq n-1$.
\end{enumerate}

Let $\mathcal{A}$ be a hyperplane arrangement in $\mathbf{R}^n$.
We say that $\mathcal{A}$ is \emph{admissible} if the following holds.
\begin{enumerate}
\item Each hyperplane in $\mathcal{A}$ is of type $1$ or of type $2$.
\item $\mathcal{A}$ contains all the hyperplanes of type $1$.
\end{enumerate}

Now we define the notion of an $n$-cluster.
Recall the notion of the face complex of a finite hyperplane arrangement from subsection \ref{hyparr}.

\begin{defn}\label{cluster}
An $n$-cluster is a CW subdivision of $[0,1]^n$ obtained by restricting the face complex
of an admissible hyperplane arrangement to $[0,1]^n$.

Let $\mathcal{A}$ be an admissible hyperplane arrangement in $\mathbf{R}^n$.
We denote by $\mathcal{C(A)}$ as the restriction of the face complex of $\mathcal{A}$ to $[0,1]^n$,
which is a subcomplex.
In particular, we say that $\mathcal{C(A)}$ is the $n$-cluster (or simply cluster) associated with $\mathcal{A}$.
\end{defn}

Hence we have a correspondence:
$$\{\text{admissible hyperplane arrangements in }\mathbf{R}^n\}\leftrightarrow \{\text{$n$-clusters}\}$$

\begin{example}
The admissible arrangement $\mathcal{A}$ given by the set of all type $1$ hyperplanes corresponds
to the standard CW structure of the regular Euclidean cube $[0,1]^n$.
If $\mathcal{A}$ consists of all type $1$ hyperplanes of $\mathbf{R}^2$ and the type $2$ hyperplane $\{x_1=x_2\}$,
then the resulting $2$-cluster is a square with a diagonal connecting $(0,0)$ and $(1,1)$.
This has four $0$-cells, five $1$-cells, and two triangular $2$-cells.
\end{example}

Our next step is to define the notion of a \emph{subcluster} of a cluster.
First we shall define a codimension $1$ subcluster, and then subsequently define lower dimensional subclusters. 


\begin{defn}
Let $\mathcal{A}$ be an admissible hyperplane arrangement, and let $\mathcal{C(A)}\subset \mathbf{R}^n$ be the associated $n$-cluster.
A \emph{codimension $1$ subcluster} of $\mathcal{C(A)}$ is obtained by taking the intersection of a hyperplane $H$ in $\mathcal{A}$ with $\mathcal{C(A)}$.
\end{defn}

A codimension $1$ subcluster automatically inherits a definition in terms of an admissible hyperplane arrangement in $\mathbf{R}^{n-1}$.
To see this, let $H\in \mathcal{A}$ be a hyperplane.
Now $H$ is of one of the following forms
$$\{x_k=x_{k+1}\}\qquad \{x_k=1\}\qquad \{x_k=0\}$$
In each respective case, we endow $H$ with a set of coordinates 
$y_1,...,y_{n-1}$ where $y_j=x_j$ if $j< k$ and $y_j=x_{j+1}$ if $k\leq j\leq n-1$.
The restriction $\mathcal{A}\restriction H$ provides an admissible hyperplane arrangement 
for $H\cong \mathbf{R}^{n-1}$.

This generalises to the following notion of subclusters.

\begin{defn}
A \emph{subcluster} of $\mathcal{C(A)}$ is a subcomplex obtained by an intersection
of $\mathcal{C(A)}$ with a flat $T$ in $\mathcal{A}$.
\end{defn}

We categorize subclusters of a given cluster as \emph{facial} and \emph{diagonal}.
This is done by separating the flats of $\mathcal{A}$ into two types.
We say that a flat $T$ of $\mathcal{A}$ is of \emph{type 1}, if it is of the form $\bigcap_{H\in A}H$,
where $A\subseteq \mathcal{A}$ is a collection of type $1$ hyperplanes.

We say that a flat $T$ of $\mathcal{A}$ is of \emph{type 2}, if is of the form
$\bigcap_{H\in A}H$, where $A\subseteq \mathcal{A}$ has the following property.
$A$ contains at least one hyperplane of the form $\{x_k=x_{k+1}\}$ so that $T$
is not contained in any of the following hyperplanes:
$$\{x_k=0\}\qquad \{x_k=1\}\qquad \{x_{k+1}=0\}\qquad \{x_{k+1}=1\}$$

\begin{defn}
Consider a subcluster obtained by taking an intersection of the cluster with a flat $T$.
If $T$ is a type $1$ flat, then the subcluster is said to be a \emph{facial subcluster}.
If $T$ is a type $2$ flat, then the subcluster is said to be a \emph{diagonal subcluster}.
\end{defn}

The following Lemma is a corollary to a similar statement
concerning the face complex.

\begin{lem}
Each cell $e$ of a cluster is the interior of a convex polytope.
The set of $0$-cells incident to $e$ is convex independent,
i.e. no one element lies in the convex span of the others.
Moreover, the closure of $e$ equals the convex span of the $0$-cells incident to $e$. 
\end{lem}

We now define the notion of complex of clusters.

\begin{defn}
(Complex of clusters) A complex of clusters is a CW complex obtained by gluing clusters along subclusters using cellular homeomorphisms.
\end{defn}

The notion of a \emph{nonpositvely curved cube complex} admits
a natural generalisation in this setting, despite the absence of a natural metric.
One may view this notion as a combinatorial criterion that guarantees asphericity.

\begin{defn}\label{npccluster}
Let $C$ be a complex of clusters.
We say that $C$ is \emph{nonpositively curved} if for every finite subcomplex $Y$ of $C$,
there is a nonpositively curved cube complex $Z$, and continuous maps $f:Y\to Z, g:Z\to C$ so that the following diagram commutes:

\[
\begin{tikzcd}
 & Z \arrow{dr}{g} \\
Y \arrow{ur}{f} \arrow[rr,hook] && C
\end{tikzcd}
\]
\end{defn}

The following is an immediate consequence of the definition.

\begin{prop}
If $C$ is a nonpositively curved cluster complex, then $C$ is aspherical.
\end{prop}

We shall demonstrate that the complex $X$ we construct in this article is nonpositively curved in this sense.
We end this section by providing the reader with some pictures of clusters in $\mathbf{R}^3$.
I thank Matt Zaremsky for allowing me to use these pictures which he has meticulously constructed.
The reader may ignore the \emph{parametrizations} that label each picture, until they read further.
It shall become apparent in subsequent sections what these parametrizations mean and the precise clusters of our complex $X$
that they describe.

\begin{figure}[htb]
\centering\begin{tikzpicture}

\coordinate (a) at (0,0);
\coordinate (b) at ($(a)+(4,0)$);
\coordinate (c) at ($(a)+(0,4)$);
\coordinate (d) at ($(a)+(4,4)$);
\coordinate (e) at ($(a)+(2,1)$);
\coordinate (f) at ($(b)+(2,1)$);
\coordinate (g) at ($(c)+(2,1)$);
\coordinate (h) at ($(d)+(2,1)$);

\filldraw[lightgray] (a) -- (b) -- (f) -- (h) -- (g) -- (c) -- (a);

\draw[line width=1] (a) -- (b) -- (d) -- (c) -- (a) -- (e) -- (f) -- (h) -- (g) -- (e)   (b) -- (f)   (d) -- (h)   (c) -- (g);
\draw[line width=1] (a) -- (g)   (a) -- (f)   (a) -- (h)   (b) -- (h)   (c) -- (h);

\filldraw (a) circle (2pt);
\filldraw (b) circle (2pt);
\filldraw (c) circle (2pt);
\filldraw (d) circle (2pt);
\filldraw (e) circle (2pt);
\filldraw (f) circle (2pt);
\filldraw (g) circle (2pt);
\filldraw (h) circle (2pt);

\node at ($(a)+(-.3,0)$) {$F$};
\node at ($(b)+(0,-.5)$) {$Fy_{s11}$};
\node at ($(c)+(0,.5)$) {$Fy_{s0}$};
\node at ($(d)+(-1,-.4)$) {$Fy_{s0}y_{s11}$};
\node at ($(e)+(.7,.4)$) {$Fy_{s10}^{-1}$};
\node at ($(f)+(1,0)$) {$Fy_{s10}^{-1}y_{s11}$};
\node at ($(g)+(0,.5)$) {$Fy_{s0}y_{s10}^{-1}$};
\node at ($(h)+(0,.5)$) {$Fy_{s0}y_{s10}^{-1}y_{s11}=Fy_s$};

\end{tikzpicture}
\caption{The $3$-cluster based at $F$ parameterized by the special forms $y_{s0}$, $y_{s10}^{-1}$, and $y_{s11}$.}
\label{fig:crowded_cluster}
\end{figure}

\begin{figure}[htb]
\centering\begin{tikzpicture}

\coordinate (a) at (0,0);
\coordinate (b) at ($(a)+(4,0)$);
\coordinate (c) at ($(a)+(0,4)$);
\coordinate (d) at ($(a)+(4,4)$);
\coordinate (e) at ($(a)+(2,1)$);
\coordinate (f) at ($(b)+(2,1)$);
\coordinate (g) at ($(c)+(2,1)$);
\coordinate (h) at ($(d)+(2,1)$);

\filldraw[lightgray] (a) -- (b) -- (f) -- (h) -- (g) -- (c) -- (a);

\draw[line width=1] (a) -- (b) -- (d) -- (c) -- (a) -- (e) -- (f) -- (h) -- (g) -- (e)   (b) -- (f)   (d) -- (h)   (c) -- (g);
\draw[line width=1] (a) -- (g)   (b) -- (h);

\filldraw (a) circle (2pt);
\filldraw (b) circle (2pt);
\filldraw (c) circle (2pt);
\filldraw (d) circle (2pt);
\filldraw (e) circle (2pt);
\filldraw (f) circle (2pt);
\filldraw (g) circle (2pt);
\filldraw (h) circle (2pt);

\node at ($(a)+(-.3,0)$) {$F$};
\node at ($(b)+(0,-.5)$) {$Fy_{s111}$};
\node at ($(c)+(0,.5)$) {$Fy_{s0}$};
\node at ($(d)+(-1,-.4)$) {$Fy_{s0}y_{s111}$};
\node at ($(e)+(.7,.4)$) {$Fy_{s10}^{-1}$};
\node at ($(f)+(1,0)$) {$Fy_{s10}^{-1}y_{s111}$};
\node at ($(g)+(0,.5)$) {$Fy_{s0}y_{s10}^{-1}$};
\node at ($(h)+(0,.5)$) {$Fy_{s0}y_{s10}^{-1}y_{s111}$};

\end{tikzpicture}
\caption{The $3$-cluster based at $F$ parameterized by the special forms $y_{s0}$, $y_{s10}^{-1}$, and $y_{s111}$.}
\label{fig:medium_cluster}
\end{figure}


\section{The $1$-skeleton of $X$}

In this section, we describe and study $X^{(1)}$.

\begin{defn}
The $0$-skeleton $X^{(0)}$ is defined to be the set of right cosets of $F$ in $G$.
The action of $G$ on $X^{(0)}$ is the usual right action on right cosets.
\end{defn}

We observe that this action is faithful.
While this is not necessary for the proof of type $F_{\infty}$,
we include the proof since it is short and clarifies the group action on the $0$-skeleton of our complex.

\begin{lem}
The action of $G$ on the right cosets of $F$ in $G$ is faithful.
\end{lem}

\begin{proof}
The kernel of the action is the normal core $\bigcap_{g\in G} g^{-1} F g$.
Let $f$ be a nontrivial element of the normal core.
This necessarily must be an element of $F$,
since $\bigcap_{g\in G} g^{-1} F g\subseteq F$.
Since $f$ is a nontrivial element of $F$,
there is a non-constant finite binary sequence $s$ such that
$s$ and $s\cdot f$ are independent.

Now from our assumption it follows that $f\in y_s^{-1} F y_s$.
So there is an element $h\in F$ such that $y_s f y_s^{-1}=h$.
After applying a rearrangement move, we obtain $h= fy_{s\cdot f} y_s^{-1}$.
We claim that the element $fy_{s\cdot f} y_s^{-1}\notin F$.
The element $y_{s\cdot f} y_s^{-1}$ does not preserve the tail equivalence relation on the sequence 
$s011011011....$ since $s,s\cdot f$ are independent.
Our claim follows.
This means that our assumption must be false, and hence the normal core is trivial.
\end{proof}

To define the edge relation, it will be useful to consider certain types of standard forms, 
which we shall call \emph{special forms}.

\begin{defn}
A (nonempty) standard form $y_{s_1}^{t_1}...y_{s_n}^{t_n}$ is called a \emph{special form}, if the following holds.
\begin{enumerate}
\item $s_1,...,s_n$ are consecutive.
\item $t_1,...,t_n\in \{-1,1\}$ and $t_{i+1}=-t_i$.
\end{enumerate}
\end{defn}

The definition implies that each pair of percolating elements in the special form
commute.
Moreover, there is a finite rooted binary tree $T$, rooted at some $s\in 2^{<\mathbf{N}}$ such that 
$s_1,...,s_n$ are consecutive leaves in $T$.

Given a percolating element $y_s$, 
if we apply an expansion move, we obtain $x_s y_{s0}y_{s10}^{-1}y_{s11}$.
The word $y_{s0}y_{s10}^{-1}y_{s11}$ is a special form.
Now we apply an ER move on $y_{s10}^{-1}$ to obtain:

$$x_s y_{s0}y_{s10}^{-1}y_{s11}\to x_s y_{s0}(x_{s10}^{-1} y_{s1000}^{-1}y_{s1001}y_{s101}^{-1})y_{s11}$$

$$\to x_sx_{s10}^{-1} (y_{s0} y_{s1000}^{-1}y_{s1001}y_{s101}^{-1}y_{s11})$$
The $Y$-word $$y_{s0} y_{s1000}^{-1}y_{s1001}y_{s101}^{-1}y_{s11}$$
is also a special form.
This generalises to the following in a straightforward manner.

\begin{lem}\label{percspecial}
Let $s\in 2^{<\Nbb}$ such that $s\neq 0^k, 1^k, \emptyset$. 
If we apply a sequence of ER moves on $y_{s}^{\pm 1}$,
to obtain a standard form $fy_{s_1}^{t_1}...y_{s_n}^{t_n}$,
then the $Y$-word $y_{s_1}^{t_1}...y_{s_n}^{t_n}$
is a special form.
Moreover, the support of $f\in F$ lies in the interval $[s0^{\infty}, s1^{\infty}]$.
\end{lem}

Two basic observations about special forms are the following.

\begin{lem}\label{special2}
Let $y_{s_1}^{t_1}...y_{s_n}^{t_n}$ be a special form, and let $f\in F$ be an element such that
$f$ acts on each sequence in the set $\{s_1,...,s_n\}$.
It follows that $y_{s_1\cdot f}^{t_1}...y_{s_n\cdot f}^{t_n}$ is also a special form.
\end{lem}
\begin{proof}
The proof is elementary, and uses the fact that tree diagrams
for elements of $F$ map consecutive pairs of sequences, upon which they act, to consecutive pairs.
\end{proof}

\begin{lem}\label{largedepth}
Let $y_{s_1}^{t_1}...y_{s_n}^{t_n}$ be a special form and let $f\in F$ and $l\in \mathbf{N}$.
We can perform a sequence of ER moves on $y_{s_1}^{t_1}...y_{s_n}^{t_n}$ to obtain a standard form $f_1y_{u_1}^{v_1}...y_{u_m}^{v_m}$, for some $f_1\in F$,
such that the following holds:
\begin{enumerate}
\item $v_1=t_1$.
\item $y_{u_1}^{v_1}...y_{u_m}^{v_m}$ is a special form.
\item $f$ acts on $u_1,...,u_m$ and $y_{u_1\cdot f}^{v_1}...y_{u_m\cdot f}^{v_m}$ is a special form of depth at least $l$.
\end{enumerate}
\end{lem}

\begin{proof}
If $f_1y_{u_1}^{v_1}...y_{u_m}^{v_m}$ is a standard form obtained from applying a sequence of ER moves to a special form $y_{s_1}^{t_1}...y_{s_n}^{t_n}$,
then it is apparent from the definition of the expansion moves that $v_1=t_1$.
If this has sufficiently large depth, then the conclusion clearly holds.

\end{proof}

We remark that given a special form $y_{s_1}^{t_1}...y_{s_n}^{t_n}$, it holds that $$(y_{s_1}^{t_1}...y_{s_n}^{t_n})^{-1}=y_{s_n}^{-t_n}...y_{s_1}^{-t_1}=y_{s_1}^{-t_1}...y_{s_n}^{-t_n}$$
where the last equality follows from applying a sequence of commuting moves.
So by convention, we refer to the \emph{formal inverse} of the special form $y_{s_1}^{t_1}...y_{s_n}^{t_n}$ as $y_{s_1}^{-t_1}...y_{s_n}^{-t_n}$.
Now we are ready to define the edges in $X^{(1)}$.

\begin{defn}\label{edgerelation}
The following equivalent conditions determine the edge relation in $X^{(1)}$.
\begin{enumerate}
\item (E1) $F\tau_1, F\tau_2\in X^{(0)}$ are connected by an edge if the double coset $F\tau_1\tau_2^{-1} F$
is equal to one of the following double cosets.  
$$F y_{10} F\qquad Fy_{10}^{-1} F$$
$$ Fy_{100}^{-1} y_{101} F\qquad F y_{100} y_{101}^{-1} F$$

\item (E2) $F\tau_1, F\tau_2\in X^{(0)}$ are connected by an edge if the element $\tau_1\tau_2^{-1}$
admits a standard form $fy_{s_1}^{t_1}...y_{s_n}^{t_n}$
such that $y_{s_1}^{t_1}...y_{s_n}^{t_n}$ is a special form.
\end{enumerate}
\end{defn}
It is not clear a priori that $(E2)$ is well defined, however this follows from Lemma \ref{axiomsedge} below.
The advantage of the formulation in $(E1)$ is that it provides a succinct definition of the $1$-skeleton.
However, in practise, $(E2)$ will be more useful.
Note that the special forms $y_{10}^{-1}, y_{100}y_{101}^{-1}$ are the formal inverses of $y_{10},y_{100}^{-1}y_{101}$ respectively.

\begin{lem}\label{axiomsedge}
The two conditions $(E1)$ and $(E2)$ in Definition \ref{edgerelation} are equivalent.
\end{lem}

\begin{proof}
First we will show that $(E1)\implies (E2)$.
Assume that $F\tau_1, F\tau_2$ satisfy $(E1)$.
Hence $$\tau_1\tau_2^{-1}= f_1 \nu f_2$$ for some $$f_1,f_2\in F\qquad \nu\in \{y_{10}, y_{100}^{-1}y_{101}, y_{10}^{-1}, y_{100}y_{101}^{-1}\}$$
Using Lemma \ref{largedepth}, we apply a sequence of ER moves on $\nu$
to obtain a special form $f_3y_{s_1}^{t_1}...y_{s_n}^{t_n}$
of sufficiently large depth such that $f_2$ acts on $s_1,...,s_n$ and $$y_{s_1}^{t_1}...y_{s_n}^{t_n}\qquad y_{s_1\cdot f_2}^{t_1}...y_{s_n\cdot f_2}^{t_n}$$ are special forms.

So
$$\tau_1\tau_2^{-1}= (f_1 f_3)(y_{s_1}^{t_1}...y_{s_n}^{t_n}) f_2$$
Applying a rearrangement move, we obtain
$$\tau_1\tau_2^{-1}= (f_1f_3f_2) (y_{s_1\cdot f_2}^{t_1}...y_{s_n\cdot f_2}^{t_n})$$
which is the desired conclusion.



Next we show that if a pair $F\tau_1,F\tau_2\in X^{(0)}$ satisfies $(E2)$,
then it also satisfies $(E1)$.
By our assumption,
there is a special form $y_{s_1}^{t_1}...y_{s_n}^{t_n}$ and $f\in F$
such that $\tau_1\tau_2^{-1}= fy_{s_1}^{t_1}...y_{s_n}^{t_n}$.
We assume that $t_1=1$, the other case is very similar.
We consider two cases.



\begin{enumerate}
\item $n$ is odd.

\item $n$ is even.

\end{enumerate}

Case $1: $ Upon applying a sequence of $\frac{n-1}{2}$ ER moves to the percolating element $y_{10}$
and its offsprings, one by one in an ad hoc fashion, we obtain a standard form $hy_{u_1}^{v_1}...y_{u_n}^{v_n}$ so that (by Lemma \ref{percspecial}) $y_{u_1}^{v_1}...y_{u_n}^{v_n}$ is a special form.
Observe that $v_1=1$.

Now we construct a pair of finite rooted binary trees $T_1,T_2$ such that:
\begin{enumerate}
\item $T_1,T_2$ have the same number of leaves.
\item $s_1,...,s_n$ are consecutive leaves of $T_1$.
\item $u_1,...,u_n$ are consecutive leaves of $T_2$.
\item The tree diagram $(T_1,T_2)$ maps $s_i$ to $u_i$
for each $1\leq i\leq n$.
\end{enumerate}
The construction of such a tree pair is an elementary exercise in the group $F$.
The main idea is the following.
Start with trees $T_3,T_4$ such that $s_1,..., s_n$ are leaves in $T_3$ and $u_1,...,u_n$ are leaves in $T_4$.
Now add carets to leaves of $T_3,T_4$ to obtain trees $T_1,T_2$ so that:
\begin{enumerate}
\item $s_1,...,s_n$ are leaves in $T_1$ and $u_1,...,u_n$ are leaves in $T_2$.
\item The number of leaves to the left of $u_1$ in $T_2$ is the same as the number of leaves to the left of $s_1$
in $T_1$.
\item The number of leaves to the right of $u_n$ in $T_2$ is the same as the number of leaves to the right of 
$s_n$ in $T_1$.
\end{enumerate}

Let $g=(T_1,T_2)\in F$.
It follows that 
$$\tau_1\tau_2^{-1}g=hy_{s_1}^{t_1}...y_{s_n}^{t_n}g=hgy_{s_1\cdot g}^{t_1}...y_{s_n\cdot g}^{t_n}=hgy_{u_1}^{v_1}...y_{u_n}^{v_n}$$

Therefore, $$F\tau_1\tau_2^{-1} F= F y_{u_1}^{v_1}...y_{u_n}^{v_n} F= Fy_{10} F$$

Case $2$: Just as in case $1$, we assume that $t_1=1$. First we apply a sequence of $\frac{n-2}{2}$ ER moves to 
the percolating elements of $y_{100}y_{101}^{-1}$, and their offsprings in an ad hoc fasion,
to obtain $hy_{u_1}^{v_1}...y_{u_n}^{v_n}$ such that $y_{u_1}^{v_1}...y_{u_n}^{v_n}$ is a special form.
Now just as in case $(1)$ we construct an element $g\in F$
such that $s_i\cdot g=u_i$.
And we obtain $$\tau_1\tau_2^{-1}g= hy_{s_1}^{t_1}...y_{s_n}^{t_n}g= hg y_{u_1}^{v_1}...y_{u_n}^{v_n}$$
It follows that $$F \tau_1\tau_2^{-1} F=F  y_{u_1}^{v_1}...y_{u_n}^{v_n} F=F y_{100}y_{101}^{-1}F$$
\end{proof}

\begin{defn}
We say that two special forms $y_{s_1}^{t_1}....y_{s_n}^{t_n}$ and $y_{u_1}^{v_1}...y_{u_m}^{v_m}$
are \emph{equivalent} if they belong to the same right coset of $F$ in $G$. 
\end{defn}

Our analysis of $X$ shall involve studying special forms up to equivalence.
Hence it is natural to specialise basic manipulations of standard forms to special forms.

\begin{defn}\label{movesdefinition}
Given a special form $\lambda=y_{s_1}^{t_1}...y_{s_n}^{t_n}$,
an \emph{expansion move} at $y_{s_i}^{t_i}$ entails one of the following:
\begin{enumerate}
\item Replacing $y_{s_i}^{t_i}$ by $y_{s_i\seq{0}}y^{-1}_{s_i\seq{10}}y_{s_i\seq{11}}$
if $t_i=1$.
\item Replacing $y_{s_i}^{t_i}$ by 
$y_{s_i\seq{00}}^{-1}y_{s_i\seq{01}}y^{-1}_{s_i\seq{1}}$
if $t_i=-1$.
\end{enumerate}
A \emph{contraction move} in $\lambda$ entails one of the following:
\begin{enumerate}
\item Replacing a subword $y_{s\seq{0}}y^{-1}_{s\seq{10}}y_{s\seq{11}}$
by $y_s$.
\item Replacing a subword $y_{s\seq{00}}^{-1}y_{s\seq{01}}y^{-1}_{s\seq{1}}$
by $y_s^{-1}$ 
\end{enumerate}
\end{defn}

When we use the phrase \emph{expansion move} or \emph{contraction move},
it will be clear from the context whether we mean this in the sense of Definition \ref{moves}
or Definition \ref{movesdefinition}.
In particular, moves on special forms that produce new special forms will be in the sense of Definition \ref{movesdefinition}.

\begin{lem}\label{moveslemma}
The following conditions hold for special forms.
\begin{enumerate}
 \item Performing contraction and expansion moves on a special form produces special forms that are equivalent.
\item Given any pair of equivalent special forms,
one can be obtained from the other by performing a
sequence of expansion and contraction moves.
\end{enumerate}
\end{lem}

\begin{proof}
 Let $y_{s_1}^{t_1}...y_{s_n}^{t_n}$ be a special form.
 If one of the substitutions $$y_s\to x_sy_{s\seq{0}}y_{s\seq{10}}^{-1}y_{s\seq{11}}\qquad y_s^{-1}=x_{s}^{-1}y_{s\seq{00}}^{-1}y_{s\seq{01}}y_{s\seq{1}}^{-1}$$
 is performed, then $x_s^{\pm}$ commutes with every percolating element
 that occurs on the left.
 This means that $x_s^{\pm}$ can be moved to the left past the percolating elements and then deleted from the word to obtain 
 a special form that lies in the same coset.
 This implies that performing expansion moves on special forms in the sense of Definition \ref{movesdefinition} yields special forms.
 The same holds for contractions.
 
 Consider two special forms that lie in the same coset.
 Special forms cannot have potential cancellations, by definition.
 So we apply a sequence of contraction moves (in the sense of \ref{movesdefinition}) to both, until we obtain 
 normal forms
 $$y_{s_1}^{t_1}...y_{s_n}^{t_n}\qquad y_{p_1}^{q_1}...y_{p_m}^{q_m}$$
 Since both these special, normal forms are equivalent by the above, they lie in the same coset. 
 By Theorem \ref{normalform} they must in fact be equal as words.
 This provides the proof of the second claim.
\end{proof}

\begin{remark}\label{specialnormalform}
Special normal forms are characterised by the property that they are special forms upon which no contraction move can be performed.
Given a special normal form, any equivalent special form can be obtained by performing a sequence of expansion moves.
\end{remark}

\begin{defn}
We define $\Omega$ to be the set of right cosets of $F$ in $G$
which admit a special form as a coset representative.
Note that since special forms must be nonempty, the trivial coset is not an element of $\Omega$.
\end{defn}

\begin{remark}
Let $y_{s_1}^{t_1}...y_{s_n}^{t_n}$ be a special form and let $f\in F$. 
As seen before, we can apply a sequence of expansion moves in the sense of \ref{movesdefinition} to $y_{s_1}^{t_1}...y_{s_n}^{t_n}$
to obtain an equivalent special form $y_{u_1}^{v_1}...y_{u_m}^{v_m}$ with the property that
$f$ acts on $u_1,...,u_m$.
This means that $$(Fy_{s_1}^{t_1}...y_{s_n}^{t_n})\cdot f= (Fy_{u_1}^{v_1}...y_{u_m}^{v_m})\cdot f= Fy_{u_1\cdot f}^{v_1}...y_{u_m\cdot f}^{v_m}$$
Thanks to Lemma \ref{special2}, $Fy_{u_1\cdot f}^{v_1}...y_{u_m\cdot f}^{v_m}\in \Omega$.
It follows that the right action of $F$ on $X^{(0)}$ restricts to a right action of $F$ on $\Omega$.
\end{remark}

\subsection{The actions of $F$ on $\Omega$ and $X^{(1)}$}\label{Omega}

Our main goal for the remainder of the section is to understand the action of $G$ on $X^{(1)}$
and demonstrate that this is cocompact and the stabilizers are of type $\mathbf{F}_{\infty}$.
Any $1$-cell contains in its $G$-orbit a $1$-cell of the form $F,Fy_{s_1}^{t_1}...y_{s_n}^{t_n}$ where $y_{s_1}^{t_1}...y_{s_n}^{t_n}$
is a special form.
To understand the stabilizer of this cell,
an important step is to understand the stabilizer of $Fy_{s_1}^{t_1}...y_{s_n}^{t_n}$ under the action of $F$.
Indeed it turns out that the two stabilizers are the same.
For this reason, it is crucial to study the action of $F$ on $\Omega$ in more detail.
We do this below.

\begin{lem}\label{Factionspecial}
Consider elements $$Fy_{s_1}^{t_1}...y_{s_n}^{t_n}, Fy_{u_1}^{v_1}...y_{u_m}^{v_m}\in \Omega$$
where $y_{s_1}^{t_1}...y_{s_n}^{t_n}$ and $y_{u_1}^{v_1}...y_{u_m}^{v_m}$ are special forms. 
These elements are in the same $F$-orbit if and only if $n,m$ have the same parity and $t_1=v_1$.
In particular, the right action of $F$ on $\Omega$ has precisely four orbits.
\end{lem}

\begin{proof}
Consider a special form $y_{u_1}^{v_1}...y_{u_m}^{v_m}$ and $f\in F$.
By performing expansion moves on $y_{u_1}^{v_1}...y_{u_m}^{v_m}$ in the sense of \ref{movesdefinition} we obtain an equivalent special form
$y_{s_1'}^{t_1}...y_{s_n'}^{t_n}$ such that $f$ acts on $s_1',...,s_n'$. Note that the parity of $n,m$ is the same, since each expansion move increases the number of percolating elements by a factor of $2$. Furthermore, any expansion move
applied to the leftmost percolating element produces a new special form whose leftmost percolating element has the same exponent. It follows that $t_1=v_1$. 
So we obtain $$Fy_{s_1'}^{t_1}...y_{s_n'}^{t_n}\cdot f= Fy_{s_1}^{t_1}...y_{s_n}^{t_n}$$
where $s_i=s_i'\cdot f$.
It follows that for $$Fy_{u_1}^{v_1}...y_{u_m}^{v_m}\qquad Fy_{s_1}^{t_1}...y_{s_n}^{t_n}$$
$t_1=v_1$ and the parity of $m,n$ is the same.
 
 Now let $$y_{s_1}^{t_1}...y_{s_n}^{t_n}\qquad y_{u_1}^{v_1}...y_{u_m}^{v_m}$$
 be two special forms such that $n,m$ have the same parity and $t_1=v_1$.
 We wish to show that they are in the same $F$-orbit.
 Assume without loss of generality that $n\leq m$. If $n<m$, upon performing a sequence of expansion moves on $y_{s_1}^{t_1}...y_{s_n}^{t_n}$ we obtain a special form $y_{s_1'}^{t_1'}...y_{s_m'}^{t_m'}$ with precisely $m$ percolating elements
 and $t_1=t_1'$.
 If $m=n$, we simply refer to $y_{s_1}^{t_1}...y_{s_n}^{t_n}$ as $y_{s_1'}^{t_1'}...y_{s_m'}^{t_m'}$ for the rest of the proof.
 In the same manner as in the proof of Lemma \ref{axiomsedge}, we construct an $f\in F$ such that $s_i'\cdot f=u_i$ for $1\leq i\leq m$.
 We obtain:
  $$F(y_{s_1}^{t_1}...y_{s_n}^{t_n})\cdot f= F(y_{s_1'}^{t_1'}...y_{s_m'}^{t_m'})\cdot f=F(y_{u_1}^{v_1}...y_{u_m}^{v_m})$$
  as required.
\end{proof}

We now state and prove the main Proposition concerning the action of $G$ on $X^{(1)}$.
In particular, we show that this is cocompact and 
the stabilizers of cells are of type $\mathbf{F}_{\infty}$.
The proofs involve the \emph{Higman-Thompson groups} $F_{3}$, and $F_{3,r}$ for $r\in \mathbf{N}_{>0}$.
For an introduction to these groups, we refer the reader to \cite{BrownFiniteness}.
We shall only use the definitions of these groups and the fact that these are of type $\mathbf{F}_{\infty}$, which has been demonstrated in \cite{BrownFiniteness}.

\begin{prop}\label{Stab1cell}
 The action of $G$ on $X^{(1)}$ satisfies the following:
 \begin{enumerate}
 \item The action is transitive on the $0$-cells and there are precisely two orbits of $1$-cells.
  \item For each $1$-cell $e$ in $X^{(1)}$, $\textup{Stab}_G(e)$ is isomorphic to $F_3\times F\times F$.
  \end{enumerate}
\end{prop}

{\bf Proof of part (1)}:
The transitivity of the action on $X^{(0)}$ is apparent and we focus on studying the action on the set of edges.
First, observe that the $1$-cells $$F,Fy_{s_1}^{t_1}...y_{s_n}^{t_n}\qquad F,Fy_{s_1}^{-t_1}...y_{s_n}^{-t_n}$$ 
for a special form $y_{s_1}^{t_1}...y_{s_n}^{t_n}$, are in the same $G$-orbit.
This is because the former is obtained from acting on the latter with the element $y_{s_1}^{t_1}...y_{s_n}^{t_n}$.
So we conclude that any $1$-cell contains in its $G$-orbit a $1$-cell of the form
$F, Fy_{s_1}^{t_1}...y_{s_n}^{t_n}$ where $t_1=1$.

Consider two $1$-cells $$e_1=F, Fy_{s_1}^{t_1}...y_{s_n}^{t_n}\qquad e_2=F, Fy_{u_1}^{v_1}...y_{u_m}^{v_m}$$ such that $t_1=v_1=1$.
First we claim that any element of $G$ that maps $e_1$ to $e_2$ must be an element of $F$.
Indeed, this would follow if any element that maps $e_1$ to $e_2$ maps the trivial coset $F$ to itself.
Assume by way of contradiction that there is an element $g\in G$ such that: 
$$F\cdot g=Fy_{u_1}^{v_1}...y_{u_m}^{v_m}\qquad Fy_{s_1}^{t_1}...y_{s_n}^{t_n}\cdot g=F$$
Consider a standard form representative $g=fy_{u_1}^{v_1}...y_{u_m}^{v_m}$ for some $f\in F$.

It follows that $$F y_{s_1}^{t_1}...y_{s_n}^{t_n}\cdot fy_{u_1}^{v_1}...y_{u_m}^{v_m}= F$$
Now let $y_{p_1}^{q_1}...y_{p_k}^{q_k}$ be a special form obtained from $y_{s_1}^{t_1}...y_{s_n}^{t_n}$
by expansion moves so that $f$ acts on $p_1,...,p_k$ and each $p_i\cdot f$ dominates $u_1,...,u_m$.
Note that $q_1=1$.

It follows that $$F y_{s_1}^{t_1}...y_{s_n}^{t_n}\cdot fy_{u_1}^{v_1}...y_{u_m}^{v_m}= F y_{p_1\cdot f}^{q_1}...y_{p_k\cdot f}^{q_k}y_{u_1}^{v_1}...y_{u_m}^{v_m}=F$$
and that $$\tau=y_{p_1\cdot f}^{q_1}...y_{p_k\cdot f}^{q_k}y_{u_1}^{v_1}...y_{u_m}^{v_m}$$
is a weak standard form.

In order for the equality $F \tau=F$ to hold, the percolating elements of $\tau$ vanish upon performing a sequence of ER moves. 
Moreover, both $y_{u_1}^{v_1}...y_{u_m}^{v_m}$ and $y_{p_1\cdot f}^{q_1}...y_{p_k\cdot f}^{q_k}$ are special forms.
Hence the pair $y_{p_1\cdot f}^{q_1}, y_{u_1}^{v_1}$ must be an adjacent pair that is a potential cancellation,
and that $p_1\cdot f=u_1 0^l$ for some $l\in \mathbf{N}$.
The calculation of $\tau$ on $u_10^{\infty}$ equals the calculation of $y_{p_1\cdot f}^{q_1}y_{u_1}^{v_1}$ on $u_10^{\infty}$ which equals
$$u_1y^{v_1}0^l y^{q_1}0^{\infty}=u_1y0^ly0^{\infty}$$
This cannot contain a potential cancellation since advancing a $y$ along a sequence of $0$'s does not change the sign of $y$ in a calculation.

It follows that $g\in F$ and maps $Fy_{s_1}^{t_1}...y_{s_n}^{t_n}$ to $Fy_{u_1}^{v_1}...y_{u_m}^{v_m}$.
Since $t_1=v_1$, by Lemma \ref{Factionspecial}, such an element exists if and only if the parity of $n,m$ is the same.
It follows that the action of $G$ on the $1$-cells has precisely two orbits.

{\bf Proof of Part (2)}:

Now we observe that the statement about the stabilizers reduces to Proposition \ref{stabilizerspecialform}, which is stated below.
As in the proof of part (1), since $G$ acts transitively on $X^{(0)}$, it follows that
each $1$-cell contains in its $G$-orbit a $1$-cell of the form $F,Fy_{s_1}^{t_1}...y_{s_n}^{t_n}$
where $y_{s_1}^{t_1}...y_{s_n}^{t_n}$ is a special form.
By the above, it suffices to understand 
the stabilizer of a $1$-cell $F,Fy_{s_1}^{t_1}...y_{s_n}^{t_n}$
where $y_{s_1}^{t_1}...y_{s_n}^{t_n}$ is a special form and $t_1=1$.
Moreover, also from the proof of part (1), any element $g\in G$ that maps the $1$-cell
$F,Fy_{s_1}^{t_1}...y_{s_n}^{t_n}$ onto itself must map $F$ to $F$ and
$Fy_{s_1}^{t_1}...y_{s_n}^{t_n}$ to $Fy_{s_1}^{t_1}...y_{s_n}^{t_n}$.
It follows that 
$$\textup{Stab}_G(F,Fy_{s_1}^{t_1}...y_{s_n}^{t_n})=
 \textup{Stab}_F(Fy_{s_1}^{t_1}...y_{s_n}^{t_n})$$ 
This reduces the proof of part (2) to Proposition \ref{stabilizerspecialform}.

\begin{prop}\label{stabilizerspecialform}
Let $y_{s_1}^{t_1}...y_{s_n}^{t_n}$ be a special form. Then $$\textup{Stab}_F(Fy_{s_1}^{t_1}...y_{s_n}^{t_n})\cong F_{3}\times F\times F$$
\end{prop}

Before we proceed to the proof of the above, we define some concepts that will be used in the proof.
The key idea is to provide a precise connection between tree diagrams for the group $F_3$ and the stabilizer above.

Let $s_1,...,s_n$ be consecutive finite binary sequences.
We consider the set of finite sub-forests of the infinite rooted binary tree,
with roots $s_1,...,s_n$.
Such a forest is determined by its leaves, which is a set of finite binary sequences $\{u_1,...,u_m\}$ satisfying:
\begin{enumerate}
\item For each $u_i$ there is an $s_j$ such that $s_j\subset u_i$. 
\item Let $\nu$ be an infinite binary sequence which contains an element of
$\{s_1,...,s_n\}$ as a prefix.
Then there is a unique element of the set $\{u_1,...,u_m\}$
which is a prefix of $\nu$.
\end{enumerate}
In other words, $s_1,...,s_n$ are the roots and $u_1,...,u_m$ are the leaves of the forest.
We also consider labels on the set of leaves of such a forest with elements of the set $\{-1,1\}$,
so that any pair of leaves that are consecutive binary sequences have distinct labels.
The set of all such \emph{forests with labeled leaves} and roots $s_1,...,s_n$ will be denoted by $\mathcal{F}(s_1,...,s_n)$.

Consider a special normal form $y_{s_1}^{t_1}...y_{s_n}^{t_n}$.
Each equivalent special form $y_{u_1}^{v_1}...y_{u_m}^{v_m}$ determines an element of $\mathcal{F}(s_1,...,s_n)$,
which has leaves $u_1,...,u_m$ labeled respectively by $v_1,...,v_m$ such that $v_1=t_1$.  
We shall now characterise precisely which elements of $\mathcal{F}(s_1,...,s_n)$
correspond to special forms equivalent to $y_{s_1}^{t_1}...y_{s_n}^{t_n}$.
Let $C_1$ and $C_2$ be finite binary trees rooted at the empty string, 
and with leaves $\{00,01,0\}$ and $\{0,10,11\}$ respectively.

The forest $T$ in 
$\mathcal{F}(s_1,...,s_n)$ corresponding to the special form $y_{s_1}^{t_1}...y_{s_n}^{t_n}$
is simply the forest with $n$ roots labelled by $t_1,...,t_n$, and no edges.

If $t_i=1$, then performing an expansion move at $y_{s_i}^{t_i}$
in $y_{s_1}^{t_1}...y_{s_n}^{t_n}$ produces 
$$(y_{s_1}^{t_1}...y_{s_{i-1}}^{t_{i-1}})(y_{s_i0}y_{s_i10}^{-1}y_{s_i11})(y_{s_{i+1}}^{t_{i+1}}...y_{s_n}^{t_n})$$
The forest corresponding to this special form is obtained by gluing the root of $C_2$ at $s_i$
in $T$, deleting the label on $s_i$,
and labelling $s_i0, s_i10, s_i11$ by $1,-1,1$ respectively.

If $t_i=-1$, then performing an expansion move at $y_{s_i}^{t_i}$
in $y_{s_1}^{t_1}...y_{s_n}^{t_n}$ produces 
$$(y_{s_1}^{t_1}...y_{s_{i-1}}^{t_{i-1}})(y_{s_i00}^{-1}y_{s_i01}y_{s_i1}^{-1})(y_{s_{i+1}}^{t_{i+1}}...y_{s_n}^{t_n})$$
The forest corresponding to this special form is obtained by gluing the root of $C_1$ at $s_i$
in $T$, deleting the label on $s_i$, and labelling $s_i00,s_i01,s_i1$ by $-1,1,-1$ respectively.

So each expansion move corresponds to attaching exactly one of $C_1$ or $C_2$
to the leaf where the expansion move is performed,
and relabelling.
This is determined by the exponent of the percolating element upon which it is performed.
We make this precise below.
In this context it makes sense to assume that the special form we begin with is a special normal form.
(Recall the discussion in \ref{specialnormalform}.)

\begin{lem}\label{forests}
Let $y_{s_1}^{t_1}...y_{s_n}^{t_n}$ be a special normal form.
The elements of $\mathcal{F}(s_1,...,s_n)$ that correspond to equivalent special forms are determined recursively by the following:
\begin{enumerate}
\item (The base case) The forest $T$ with no edges, and $n$ roots $s_1,...,s_n$ labelled with $t_1,...,t_n$ respectively. 
\item (Expansion) Let $T_1$ be an element of $\mathcal{F}(s_1,...,s_n)$ that corresponds to a special form equivalent to $y_{s_1}^{t_1}...y_{s_n}^{t_n}$.
\begin{enumerate}
\item Let $s$ be a leaf of $T_1$ labeled by $1$.
We obtain an element $T_2\in \mathcal{F}(s_1,...,s_n)$ by gluing the root of $C_2$ at $s$, deleting the label at $s$, and labelling $s0,s10,s11$ by $1,-1,1$ respectively.
$T_2$ corresponds to a special form equivalent to $y_{s_1}^{t_1}...y_{s_n}^{t_n}$.
\item Let $s$ be a leaf of $T_1$ labeled by $-1$.
We obtain an element $T_2\in \mathcal{F}(s_1,...,s_n)$ by gluing the root of $C_1$ at $s$, deleting the label at $s$, and labelling $s00,s01,s1$ by $-1,1,-1$ respectively.
$T_2$ corresponds to a special form equivalent to $y_{s_1}^{t_1}...y_{s_n}^{t_n}$.
\end{enumerate}
\end{enumerate}
Every element of $\mathcal{F}(s_1,...,s_n)$ that corresponds to a special form equivalent to $y_{s_1}^{t_1}...y_{s_n}^{t_n}$
can be obtained by applying a sequence of expansions described in $(2)$ to the forest $T$ described in the base case $(1)$.
\end{lem}

We denote the set of $3$-branching forests with $n$ roots by $\mathcal{F}_{3,n}$.
The set of forests obtained in Lemma \ref{forests} are in natural bijective correspondence
with $\mathcal{F}_{3,n}$.
We leave this a straightforward exercise for the reader.
The main idea is that both trees $C_1,C_2$ can be associated with a single
\emph{ternary} caret, i.e. a tree with a root connected with three leaves.
Since each addition of $C_1$ or $C_2$ is determined by the label,
this determines the correspondence.

Now consider two elements $T_1,T_2$ of $\mathcal{F}(s_1,...,s_n)$
such that $T_1,T_2$ have the same number of leaves.
The forest pair $(T_1,T_2)$ determines a prefix replacement 
homeomorphism of the rational interval $[s_10^{\infty},s_n1^{\infty}]$.
The set of all such forest pairs is closed under composition and inverses,
and hence forms a group of homeomorphisms of $[s_10^{\infty},s_n1^{\infty}]$.
In fact, the following holds.

\begin{lem}\label{subgroup}
Consider forest pairs $(T_1,T_2)\in Homeo^+([s_10^{\infty},s_n1^{\infty}])$ such that:
\begin{enumerate}
\item $T_1,T_2$ are forests in $\mathcal{F}(s_1,...,s_n)$ corresponding to special forms
equivalent to $y_{s_1}^{t_1}...y_{s_n}^{t_n}$.
\item $T_1,T_2$ have the same number of leaves.
\end{enumerate}
The set of all such pairs forms a group under composition.
This group is isomorphic to $F_{3,n}$.
The isomorphism is described by the aforementioned bijection
between forests pairs in $\mathcal{F}(s_1,...,s_n)$ that correspond to 
equivalent special forms and forests diagrams for $F_{3,n}$.
\end{lem}

\begin{proof}
The idea behind the proof is to mimic the tree diagrams for $F_{3,n}$
for this group while replacing the role of the ternary caret with $C_1$ and $C_2$.
The reader familiar with tree diagrams for $F_{3,n}$ will find this elementary.
\end{proof}

Recall that the groups $F_{3,n}$ for $n\in \mathbf{N}_{>0}$, are all isomorphic to the group $F_{3,1}=F_3$ (see \cite{BrownFiniteness}).
With this in mind, we proceed to the proof of our proposition.\\

{\bf Proof of Proposition \ref{stabilizerspecialform}}:

\begin{proof}
Let $y_{s_1}^{t_1}...y_{s_n}^{t_n}$ be a special normal form.
Consider $f\in \textup{Stab}_F(Fy_{s_1}^{t_1}...y_{s_n}^{t_n})$.
If $f$ does not act on $s_1,...,s_n$,
we perform a sequence of expansion moves to obtain a special form
$y_{u_1}^{v_1}...y_{u_m}^{v_m}$
such that $f$ acts on $u_1,...,u_m$.
By our assumption, $$Fy_{u_1}^{v_1}...y_{u_m}^{v_m}\cdot f= Fy_{u_1\cdot f}^{v_1}...y_{u_m\cdot f}^{v_m}$$
$$=Fy_{s_1}^{t_1}...y_{s_n}^{t_n}$$
It follows that we can perform a sequence of contraction moves on 
$y_{u_1\cdot f}^{v_1}...y_{u_m\cdot f}^{v_m}$ to obtain $y_{s_1}^{t_1}...y_{s_n}^{t_n}$.

We observe that the following holds for the element $f$:
\begin{enumerate}
\item $f$ stabilises the intervals $[0^{\infty}, s_10^{\infty}]$, $[s_10^{\infty},s_n1^{\infty}]$
and $[s_n1^{\infty},1^{\infty}]$, because $u_1=s_10^k$ and $u_m=s_n1^l$ for some $k,l\in \mathbf{N}$. 
\item The restriction of $f$ to $[s_10^{\infty},s_n1^{\infty}]$ admits a forest diagram $(T_1,T_2)$,
where $T_1, T_2$ are forests in $\mathcal{F}(s_1,...,s_n)$ that correspond to 
special forms $y_{u_1}^{v_1}...y_{u_m}^{v_m}$ and $y_{u_1\cdot f}^{v_1}...y_{u_m\cdot f}^{v_m}$
respectively.
\end{enumerate}
This defines a monomorphism of $Stab_F(Fy_{s_1}^{t_1}...y_{s_n}^{t_n})$
into $F\times F_{3,n}\times F$.
Thanks to Lemma \ref{subgroup} the projection to $F_{3,n}$ is surjective.
Observe that we also have an epimorphism, since if we modify the restrictions to $[0^{\infty}, s_10^{\infty}]$
and $[s_n1^{\infty},1^{\infty}]$ to any element of the copies of $F$ supported on these respective intervals, the new element is also in $Stab_F(Fy_{s_1}^{t_1}...y_{s_n}^{t_n})$.
Therefore, this is an isomorphism.
\end{proof}




\subsection{A detour on expansion moves and the expansion Lemmas}
We now take a detour to prove two Lemmas concerning this notion of expansion moves. 
We prove it here since this is the most natural place to prove it, even though we shall only make use of it later in the paper in Subsection \ref{intersectionclus} and also in the final Section.
In particular, the reader will benefit from the intuition and some notation from the previous subsection while reading these proofs.

In this section we use both notions of expansion coming from Definitions \ref{movesdefinition} and \ref{moves}.
The two notions are closely related, and we explore this relationship in some depth in this subsection.
The notion of expansion moves as defined for special forms in \ref{movesdefinition} can be defined for any word $y_{s_1}^{t_1}...y_{s_n}^{t_n}$
with the property that $s_1,...,s_n$ are independent and $t_i\in \{-1,1\}$.
And the contents of the Lemma \ref{moveslemma} also hold for such words with the same convention that equivalence for such words 
means that they lie in the same coset.

Now expansion moves in the sense of \ref{movesdefinition} on percolating elements of $y_{s_1}^{t_1}...y_{s_n}^{t_n}$ correspond to a subset of elements of $\mathcal{F}(s_1,...,s_n)$
as in the discussion above.
Similar ideas as above apply in the situation where  $y_{s_1}^{t_1}...y_{s_n}^{t_n}$ is not a special form, but satisfies that $s_1,...,s_n$ are independent and $t_i\in \{-1,1\}$.
In particular, the set of equivalent $Y$-words obtained by applying such expansion moves on $y_{s_1}^{t_1}...y_{s_n}^{t_n}$ is in natural bijective correspondence with finite $3$-ary trees with $n$ roots.

Before we proceed, we state the elementary relationship between the two notions of expansion.

\begin{lem}\label{elemrelmoves}
Consider a word $y_{s_1}^{t_1}...y_{s_n}^{t_n}$
with the property that $s_1,...,s_n$ are independent and $t_i\in \{-1,1\}$.
Let $y_{u_1}^{v_1}...y_{u_m}^{v_m}$ be obtained from performing expansion moves on $y_{s_1}^{t_1}...y_{s_n}^{t_n}$ in the sense of Definition \ref{movesdefinition}.

Then there is an $f\in F$ such that $fy_{u_1}^{v_1}...y_{u_m}^{v_m}$ can be obtained from performing expansion (followed by rearrangement) moves on $y_{s_1}^{t_1}...y_{s_n}^{t_n}$ in the sense of Definition \ref{moves}.
Moreover, $Supp(f)\subseteq Supp(y_{s_1}^{t_1}...y_{s_n}^{t_n})$ and $f^{-1}$ acts on $u_1,...,u_n$.
\end{lem}

\begin{proof}
This follows immediately from the definitions.
\end{proof}

Now we state and prove the weak expansion Lemma.

\begin{lem}\label{compatibleexpansionlemma}
(Weak expansion Lemma) Let $y_{s_1}^{t_1}...y_{s_n}^{t_n}$ and $y_{u_1}^{v_1}...y_{u_m}^{v_m}$ be words satisfying that:
\begin{enumerate}
\item $s_1,...,s_n$ are independent and $t_1,...,t_n\in \{1,-1\}$.
\item $u_1,...,u_m$ are independent and $v_1,...,v_m\in \{1,-1\}$.
\item $Fy_{s_1}^{t_1}...y_{s_n}^{t_n}=Fy_{u_1}^{v_1}...y_{u_m}^{v_m}$
\end{enumerate}
Then we can perform expansion moves (in the sense of \ref{movesdefinition}) on both words to obtain the same word $y_{p_1}^{q_1}...y_{p_l}^{q_l}$
such that $p_1,...,p_l$ are independent and $q_1,...,q_l\in \{1,-1\}$.
\end{lem}

\begin{proof}
Performing a sequence of contraction moves on both words produces the same $Y$-word $y_{r_1}^{j_1}...y_{r_k}^{j_k}$ in normal form,
thanks to Corollary \ref{normalformcor}.

The set of $Y$-words obtained from applying a sequence of expansion moves on $y_{r_1}^{j_1}...y_{r_k}^{j_k}$ corresponds to finite $3$-ary forests with $k$ roots.
(Recall the discussion in the previous subsection.)
Two such forests correspond to $y_{s_1}^{t_1}...y_{s_n}^{t_n}$ and $y_{u_1}^{v_1}...y_{u_m}^{v_m}$.
By adding ternary carets to both these forests, we produce the same forest, which corresponds to a $Y$-word $y_{p_1}^{q_1}...y_{p_l}^{q_l}$ with the desired properties.
\end{proof}

The Strong Expansion Lemma will provide the core technical step in the proof of asphericity in the last section.
Before we state and prove the Lemma, we consider some additional elementary features of expansions.

The reader familiar with the arguments in \cite{LodhaMoore} will already be aware of this, but for the sake of completeness we discuss first the following feature.
As a motivating example, consider the expansion $y_s=x_sy_{s0}y_{s10}^{-1}y_{s11}$.
Now taking inverses, we obtain $y_s^{-1}=y_{s11}^{-1}y_{s10}y_{s0}^{-1} x_s^{-1}$.
Now $$(s11)\cdot x_s^{-1}=s1\qquad (s10)\cdot x_s^{-1}=s01\qquad  (s0)\cdot x_s^{-1}=s00$$

So upon applying a rearrangement move, followed by commutation moves we get $$y_{s11}^{-1}y_{s10}y_{s0}^{-1} x_s^{-1}=x_s^{-1} y_{s1}^{-1} y_{s01}y_{s00}^{-1}= x_s^{-1}y_{s00}^{-1}y_{s01}y_{s1}^{-1}$$
And $x_s^{-1}y_{s00}^{-1}y_{s01}y_{s1}^{-1}$ is precisely the result of applying an expansion move (in the sense of \ref{moves}) to $y_s^{-1}$.
Using an elementary inductive argument this generalises to the following:

\begin{lem}\label{elementaryinversion}
Let $y_s^t$ be such that $t\in \{1,-1\}$.
Let $fy_{s_1}^{t_1}...y_{s_n}^{t_n}$ be a standard form obtained from applying expansion moves (in the sense of \ref{moves}) to $y_s^t$.
Then the following holds:
\begin{enumerate}
\item $f^{-1}$ acts on $s_1,...,s_n$.
\item $f^{-1} y_{s_1\cdot f^{-1}}^{t_1}...y_{s_n\cdot f^{-1}}^{t_n}$ can be obtained from applying expansion moves (in the sense of \ref{moves}) to $y_s^{-t}$.
\end{enumerate}
\end{lem}

\begin{defn}
Let $y_s^t$ and $y_u^v$ be a pair of percolating elements such that $t,v\in \{1,-1\}$.
Then the pair is said to contain \emph{no common offsprings} if the following holds:
For any pair $y_{s_1}^{t_1}...y_{s_n}^{t_n}$
and $y_{u_1}^{v_1}...y_{u_m}^{v_m}$ obtained from performing expansion moves (in the sense of \ref{movesdefinition}) on $y_s^t$ and $y_u^v$ respectively,
each pair $y_{s_i}^{t_i}, y_{u_j}^{v_j}$ is distinct.

For the negation of the above, we say that the pair $y_s^t$ and $y_u^v$ contains \emph{common offsprings}.
That is, $y_s^t$ and $y_u^v$ contain \emph{common offsprings} if there are $y_{s_1}^{t_1}...y_{s_n}^{t_n}$
and $y_{u_1}^{v_1}...y_{u_m}^{v_m}$ obtained from performing expansion moves on $y_s^t$ and $y_u^v$ respectively (in the sense of \ref{movesdefinition})
such that $y_{s_i}^{t_i}=y_{u_j}^{v_j}$ for some $1\leq i\leq n, 1\leq j\leq m$.
\end{defn}

Note that for a pair $y_s^t$ and $y_u^v$, if $s,u$ are independent, then the pair contains no common offsprings.
However, the case where $u\subseteq s$ is more subtle.
The following criterion is useful in detecting if such a pair does not have common offsprings.

\begin{lem}\label{critcomoff}
Let $y_s^t$ and $y_u^v$ be such that $t,v\in \{1,-1\}$ and $u\subseteq s$.
If $y_s^t$ and $y_u^v$ have common offsprings then the standard form $y_s^ty_u^{-v}$ contains a potential cancellation. 
\end{lem}

\begin{proof}
Assume that $y_s^t$ and $y_u^v$ have common offsprings.
Then we can apply expansion moves in the sense of \ref{movesdefinition} to obtain words $y_{s_1}^{t_1}...y_{s_n}^{t_n}$ and $y_{u_1}^{v_1}...y_{u_m}^{v_m}$
such that $y_{s_i}^{t_i}=y_{u_j}^{v_j}$ for some pair $i,j$.
Now there are elements $f,g\in F$ such that $fy_{s_1}^{t_1}...y_{s_n}^{t_n}$ and $gy_{u_1}^{v_1}...y_{u_m}^{v_m}$ can be obtained from $y_s^t$ and $y_u^v$ respectively
using expansions in the sense of \ref{moves}.

By Lemma \ref{elementaryinversion} we know that $g^{-1}$ acts on $u_1,...,u_m$.
Note that $g^{-1}$ acts on $s_i$ since $s_i=u_j$.
We also ensure that $g^{-1}$ acts on elements of the set $s_1,...,s_n$ by performing expansion moves (in the sense of \ref{moves}) to percolating elements $$y_{s_1}^{t_1},...,y_{s_{i-1}}^{t_{i-1}},y_{s_{i+1}}^{t_{i+1}},...y_{s_n}^{t_n}$$
in the word $fy_{s_1}^{t_1}...y_{s_n}^{t_n}$ if necessary.
Note that the resulting word also witnesses the common offspring since it contains the percolating element $y_{s_i}^{t_i}$. 
For notational convenience we also denote the resulting word by $fy_{s_1}^{t_1}...y_{s_n}^{t_n}$.

By Lemma \ref{elementaryinversion}, we know that the word $g^{-1}y_{u_1\cdot g^{-1}}^{-v_1}...y_{u_m\cdot g^{-1}}^{-v_m}$
can be obtained from $y_u^{-v}$ using expansions in the sense of \ref{moves}.
So we get $$y_s^ty_u^{-v}= (fy_{s_1}^{t_1}...y_{s_n}^{t_n})g^{-1}y_{u_1\cdot g^{-1}}^{-v_1}...y_{u_m\cdot g^{-1}}^{-v_m}$$
$$=fg^{-1}(y_{s_1\cdot g^{-1}}^{t_1}...y_{s_n\cdot g^{-1}}^{t_n})y_{u_1\cdot g^{-1}}^{-v_1}...y_{u_m\cdot g^{-1}}^{-v_m}$$
Now since $y_{s_i\cdot g^{-1}}^{t_i}=y_{u_j\cdot g^{-1}}^{v_j}$ we can see that there is a potential cancellation in $y_s^ty_u^{-v}$.
\end{proof}

The Strong Expansion Lemma, which we state and prove below, shall play a key role in of the proof of asphericity of $X$ in the final section.

\begin{lem}\label{strongexpansionlemma}
(Strong Expansion Lemma) 
Let $y_{s_1}^{t_1}...y_{s_n}^{t_n}$ and $y_{u_1}^{v_1}...y_{u_m}^{v_m}$ be words such that $s_1,...,s_n$ and $u_1,...,u_m$ are independent, and each $t_i,v_j\in \{1,-1\}$.
Then there are words $y_{p_1}^{q_1}...y_{p_l}^{q_l}$ and $y_{r_1}^{w_1}...y_{r_k}^{w_k}$ such that:
\begin{enumerate}
\item $y_{p_1}^{q_1}...y_{p_l}^{q_l}$ is obtained from $y_{s_1}^{t_1}...y_{s_n}^{t_n}$ by applying expansion moves as in \ref{movesdefinition}.
\item $y_{r_1}^{w_1}...y_{r_k}^{w_k}$ is obtained from $y_{u_1}^{v_1}...y_{u_m}^{v_m}$ by applying expansion moves as in \ref{movesdefinition}.
\item For each pair $y_{p_i}^{q_i}, y_{r_j}^{w_j}$ it holds that either $y_{p_i}^{q_i}= y_{r_j}^{w_j}$ or $y_{p_i}^{q_i}, y_{r_j}^{w_j}$ contains no common offsprings.
\end{enumerate}
\end{lem}

\begin{proof}
We show this by induction on $n$.
For the base case, we consider $y_s^t$ and  $y_{u_1}^{v_1}...y_{u_m}^{v_m}$ as the words such that $u_1,...,u_m$ are independent, and each $t,v_j\in \{1,-1\}$.
We apply expansion moves to $y_{u_1}^{v_1}...y_{u_m}^{v_m}$ to obtain a special form $y_{r_1}^{w_1}...y_{r_k}^{w_k}$ with depth greater than $|s|$.
It follows that the word $(y_{r_1}^{w_1}...y_{r_k}^{w_k}) y_s^{-t}$ is a weak standard form.

Now using Step $1$ in the proof of Theorem \ref{normalform}, we reduce the word $(y_{r_1}^{w_1}...y_{r_k}^{w_k}) y_s^{-t}$ to a word with no potential cancellations.
In this process, we obtain $$(y_{r_1}^{w_1}...y_{r_k}^{w_k}) f(y_{p_1}^{q_1}...y_{p_l}^{q_l})=f (y_{r_1\cdot f}^{w_1}...y_{r_k\cdot f}^{w_k})(y_{p_1}^{q_1}...y_{p_l}^{q_l})$$
such that:
\begin{enumerate}
\item $f(y_{p_1}^{q_1}...y_{p_l}^{q_l})$ is obtained from performing a sequence of expansion moves (in the sense of \ref{moves}) on $y_s^{-t}$. 
\item Each pair $y_{r_i\cdot f}^{w_i},y_{p_j}^{q_j}$ satisfies that either $r_i\cdot f=p_j, w_i=-q_j$ or the standard form $y_{r_i\cdot f}^{w_i}y_{p_j}^{q_j}$ does not contain a potential cancellation.
\end{enumerate} 

We claim that the words $$y_{r_1}^{w_1}...y_{r_k}^{w_k}\qquad y_{p_1\cdot f^{-1}}^{-q_1}...y_{p_l\cdot f^{-1}}^{-q_l}$$
witness the statement of the Lemma.
First, thanks to Lemma \ref{elementaryinversion} note that $$f^{-1}y_{p_1\cdot f^{-1}}^{-q_1}...y_{p_l\cdot f^{-1}}^{-q_l}$$ is obtained from $y_s^t$ using expansion moves (in the sense of \ref{moves}).
In particular, $$y_{p_1\cdot f^{-1}}^{-q_1}...y_{p_l\cdot f^{-1}}^{-q_l}$$
is obtained from $y_s^t$ using expansion moves (in the sense of \ref{movesdefinition}).

Next, assume by way of contradiction that there is a pair of elements $y_{r_i}^{w_i}, y_{p_j\cdot f^{-1}}^{-q_j}$ which are not equal and have a common offspring. 
Then by Lemma \ref{critcomoff} the word $y_{r_i}^{w_i} y_{p_j\cdot f^{-1}}^{q_j}$ has a potential cancellation.
Since $f$ acts on both $r_i$ and $p_j\cdot f^{-1}$, we get $$y_{r_i}^{w_i} y_{p_j\cdot f^{-1}}^{q_j}\cdot f=fy_{r_i\cdot f}^{w_i}y_{p_j}^{q_j}$$
Since $f$ is merely a prefix replacement map, the word $y_{r_i\cdot f}^{w_i}y_{p_j}^{q_j}$ has a potential cancellation.
This contradicts conclusion $(2)$ from the above.

Now the inductive step is essentially the same as the base case.
Given $y_{s_1}^{t_1}...y_{s_n}^{t_n}$ and $y_{u_1}^{v_1}...y_{u_m}^{v_m}$,
we perform the process for $y_{s_1}^{t_1}...y_{s_{n-1}}^{t_{n-1}}$ and $y_{u_1}^{v_1}...y_{u_m}^{v_m}$,
and then for $y_{s_n}^{t_n}$ and the resulting $Y$-word obtained from $y_{u_1}^{v_1}...y_{u_m}^{v_m}$ by expansions.
In the latter step, only the part of the word whose support intersects that of $y_{s_n}^{t_n}$ is expanded, and so the resulting offsprings from those expansions have disjoint support with $y_{s_1}^{t_1}...y_{s_{n-1}}^{t_{n-1}}$.
\end{proof}


\section{The complex $X$}
The goal of this section is to define the complex $X$ by adding higher dimensional cells to the $1$-skeleton $X^{(1)}$
defined in the previous section.
We define a class of finite subgraphs in $X^{(1)}$ that shall be the $1$-skeletons of clusters in $X$.
The set of all such graphs will be denoted throughout the paper as $\mathcal{H}$.
Higher cells will be then added to graphs in $\mathcal{H}$, and hence to $X^{(1)}$
in a well defined and $G$-equivariant manner.

Before we state the main definition of $\mathcal{H}$,
we state a few preliminary definitions.
This language will be useful in describing elements of $\mathcal{H}$.

\begin{defn}
Special forms $y_{s_1}^{t_1}...y_{s_n}^{t_n}$ and $y_{p_1}^{q_1}...y_{p_m}^{q_m}$
are said to be \emph{independent} if each pair $s_i,p_j$ is independent.
A list of special forms $\tau_1,...,\tau_n$ is said to be \emph{independent} if the special forms are pairwise independent.

A list $\tau_1,...,\tau_n$ is said to be \emph{sorted} if it satisfies the following:
\begin{enumerate}
\item $\tau_1,...,\tau_n$ are independent.
\item For each $i<j$, the pair $\tau_i,\tau_j$ satisfies that
if $y_s^t$ and $y_u^v$ are percolating elements that appear in $\tau_i$ and $\tau_j$ respectively, then $s<u$.
\end{enumerate}
Informally, the second condition here means that $\tau_1,...,\tau_n$ appear in the order \emph{left to right} as visualised in the infinite rooted binary tree. 
\end{defn}

The following is a basic observation that will be useful throughout the paper.
The proof of this is an elementary exercise concerning expansion moves in the sense of \ref{movesdefinition}, and the partial action of elements of $F$ on the set of finite binary sequences.

\begin{lem}\label{FactionspecialL}
Let $\tau_1,...,\tau_n$ be a sorted list of special forms. The following holds.
\begin{enumerate}
\item If $\tau_1',...,\tau_n'$ are special forms such that $\tau_i,\tau_i'$ are equivalent,
then $\tau_1',...,\tau_n'$ is also a sorted list.

\item Let $f\in F$ be such that for each percolating element $y_s^t$ in any $\tau_i$, $f$ acts on $s$.
Let $\tau_i'$ be the special form obtained from replacing each $y_s^t$ in $\tau_i$ with $y_{s\cdot f}^t$.
Then $\tau_1',...,\tau_n'$ is a sorted list.
\end{enumerate}
\end{lem}

Throughout the paper, we fix the following convention for a sorted list $\tau_1,...,\tau_n$.
Given $X\subseteq \{1,...,n\}$, we denote $$\tau_X=\prod_{i\in X}\tau_i=\tau_{j_1}\tau_{j_2}...\tau_{j_l}\qquad \text{ where }X=\{j_1,....,j_l\}\text{ and } j_1<j_2<...<j_l$$
So $\prod_{i\in X}\tau_i$ denotes a formal word obtained from concatenation of the $\tau_i$ for $i\in X$ in the increasing order of indices.
So for instance, if $X=\{1,3,4\}$, then $\tau_X=\tau_1\tau_3\tau_4$.  

Now we are ready to define the family $\mathcal{H}$.

\begin{defn}
An element $C$ of $\mathcal{H}$ is a subgraph of $X^{(1)}$ which is determined by the following:
\begin{enumerate}
\item A sorted list of special forms $\tau_1,...,\tau_n$.
\item A coset $F\tau$.
\end{enumerate}
Then $C$ is the induced subgraph of the vertex set
$$\{ F \tau_X \tau\mid X\subseteq \{1,...,n\}\}$$
in $X^{(1)}$.
The data in $(1)$ and $(2)$ above is said to be a \emph{description} of $C$.
We say that $C$ is \emph{described by} the \emph{base} $F\tau$
and \emph{parameters} $\tau_1,...,\tau_n$.
It will be often convenient to choose $\tau$ above as a $Y$-word,
and later in the paper we shall make this a convention for certain definitions.
\end{defn}

Observe that each closed $1$-cell in $X^{(1)}$ is an element of $\mathcal{H}$.
To see this, recall that each $1$-cell $F\tau_1, F\tau_2$ satisfies that $$F\tau_2\tau_1^{-1}=F\nu$$
where $\nu$ is a special form.
It follows that $F\tau_2=F\nu \tau_1$, and so the $1$-cell admits a description with base $F\tau_1$ and parameter $\nu$.

\subsection{Descriptions of graphs in $\mathcal{H}$ and the $G$-action.}

A graph in $\mathcal{H}$ can be described in many different ways.
For example, the closed $1$-cell $Fy_{10}, F$ can be described with base $F$ and parameter $y_{10}$,
or with base $Fy_{10}$ and parameter $y_{10}^{-1}$.
Since graphs in $\mathcal{H}$ shall emerge as $1$-skeletons of clusters in $X$,
we shall describe clusters in $X$ by describing their $1$-skeleton in $\mathcal{H}$.
Therefore, it shall be useful to understand different kinds of descriptions of graphs in $\mathcal{H}$.
Much of the technical difficulty in this article arises from dealing with non-uniqueness of the descriptions of clusters.
It is not clear how to choose a canonical representative that suits all situations.

Given a graph in $\mathcal{H}$,
we can find a description with any given node of the graph as base.
This follows immediately from the definitions and is captured in the following Lemma.

\begin{lem}\label{clusterdescription}
Let $C\in \mathcal{H}$ be described with base $F\tau$ and parameters $\tau_1,...,\tau_n$.
Then for any $X\subseteq \{1,...,n\}$, $C$ admits a description with base $F \tau_X \tau$, and parameters $\tau_1^{s_1},...,\tau_n^{s_n}$, 
where $s_i=-1$ if $i\in X$ and $s_i=1$ otherwise.
\end{lem}

Next, we observe that replacing parameters by equivalent special forms describe the same graph.

\begin{lem}\label{equivalentparametrisation}
Let $C\in \mathcal{H}$ be described with base coset $F\tau$ and parameters $\tau_1,...,\tau_n$.
Let $\tau_1',...,\tau_n'$ be special forms such that $\tau_i'$ is equivalent to $\tau_i$.
Then $C$ also admits the description with base coset $F\tau$ and parameters $\tau_1',...,\tau_n'$.
\end{lem} 

\begin{proof}
Since they are equivalent, it follows that $\tau_i'$ can be obtained from $\tau_i$ by a sequence of expansion and contraction moves in the sense of \ref{movesdefinition}.
In particular, from Lemma \ref{elemrelmoves} we know that there are $f_i\in F$ such that:
\begin{enumerate}
\item $\tau_i=f_i\tau_i'$ for $ 1\leq i\leq n$.
\item The support of $f_i$ is contained in the support of $\tau_i$, which equals the support of $\tau_i'$.
\end{enumerate}
This means that $f_i,\tau_j'$ commute if $i\neq j$.
It follows that for each $X\subseteq \{1,...,n\}$, 
we have $F\tau_X\tau= F\tau_X'\tau$. 
\end{proof}

A nice consequence of the above is that when describing the graph $C$,
we can conveniently choose parameters with desired features.
For instance, in some cases it may be useful to choose parameters with sufficiently large depth.

We end this section with the following useful remark about the choice of base coset.

\begin{remark}\label{basecosetyword}
Let $C$ be a cluster described with base $F\tau$ and parameters $\tau_1,...,\tau_n$.
The word $\tau$ may not be a $Y$-word, but we can find a different description for $C$ whose base coset representative is a $Y$-word.
This is done as follows.

First we convert the word $\tau$ into a standard form $f\nu_1$, where $f\in F$ and $\nu_1$ is a $Y$-word.
Let $\tau_i'$ be a special form obtained by applying expansion moves on $\tau_i$ such that for each $y_s^t$ that occurs in $\tau_i'$,
$f$ acts on $s$.
Let $\tau_i''$ be the special form obtained by replacing each percolating element 
$y_s^{t}$ in $\tau_i'$ by $y_{s\cdot f}^{t}$.
It follows from Lemma \ref{FactionspecialL} that $\tau_1'',...,\tau_n''$ is a sorted list.

So for each $X\subseteq \{1,...,n\}$ $$F\tau_X \tau= F\tau_X f\nu_1=F\tau_X''\nu_1$$
We conclude that $C$ admits a description with base at $F\nu_1$ and with parameters
$\tau_1'',...,\tau_n''$.
\end{remark}

\begin{lem}\label{clusteraction}
Let $C\in \mathcal{H}$.
For an element $\nu\in G$, $C\cdot \nu$ is also an element of $\mathcal{H}$.
\end{lem}

\begin{proof}
Let $C$ be described with base $F\tau$ and parameters $\tau_1,...,\tau_n$.
Then $C\cdot \nu$ is a cluster that admits a description with base $F(\tau\nu)$ and parameters $\tau_1,...,\tau_n$.
\end{proof}

Lemma \ref{clusteraction} is a simple observation, but has useful consequences.
For instance, while formulating our arguments 
we shall often assume that a given cluster is based at the trivial coset, using transitivity of the $G$-action on $X^{(0)}$.

\subsection{$\mathcal{H}$ is closed under intersection}\label{intersectionclus}

\begin{prop}\label{closedintersection}
Let $C_1,C_2\in \mathcal{H}$.
If $C_1\cap C_2$ is nonempty, then it is an element of $\mathcal{H}$.
\end{prop}

First we observe the following.
If $F\tau$ is a node in $C_1\cap C_2$, acting upon this by $\tau^{-1}$,
we obtain $$(C_1\cap C_2)\cdot \tau^{-1}=(C_1\cdot \tau^{-1})\cap (C_2\cdot \tau^{-1})$$
Thanks to Lemma \ref{clusteraction}, $C_1\cap C_2\in \mathcal{H}$ if and only if 
$(C_1\cdot \tau^{-1})\cap (C_2\cdot \tau^{-1})\in \mathcal{H}$.
The advantage of considering the latter is that the node given by the trivial coset $F$ lies in $(C_1\cdot \tau^{-1})\cap (C_2\cdot \tau^{-1})$.

Before we supply a full proof of Proposition \ref{closedintersection}, we describe conceptual ingredients that are needed in the proof.

\begin{defn}
Let $\tau_1,...,\tau_n$ and $\lambda_1,...,\lambda_m$ be sorted lists.
Such a pair of sorted lists is said to be \emph{compatible} if the following holds.
For each $X\subseteq \{1,...,n\},Y\subseteq \{1,...,m\}$
if $F\tau_X=F\lambda_Y$ then $\tau_X=\lambda_Y$ (the latter denotes equality as words).
\end{defn}

\begin{lem}\label{pairsorted}
Let $\tau_1,...,\tau_n$ and $\lambda_1,...,\lambda_m$ be sorted lists.
Then there is a compatible pair of sorted lists $\tau_1',...,\tau_n'$ and $\lambda_1',...,\lambda_m'$ such that
$\tau_i,\tau_i'$ and $\lambda_i,\lambda_i'$ are equivalent for each $1\leq i\leq n$.
\end{lem}

\begin{proof}
First we prove the following.

{\bf Claim}: Let $X_1,X_2\subseteq \{1,...,n\}$ and $Y_1,Y_2\subseteq \{1,...,m\}$ be such that
$$F\tau_{X_1}=F\lambda_{Y_1}\qquad F\tau_{X_2}=F\lambda_{Y_2}$$
Then it follows that $$F\tau_{X_1\cap X_2}=F\lambda_{Y_1\cap Y_2}\qquad F\tau_{X_2\setminus X_1}=F\lambda_{Y_2\setminus Y_1}\qquad F\tau_{X_1\setminus X_2}=F\lambda_{Y_1\setminus Y_2}$$

{\bf Proof of claim}: We can apply Lemma \ref{compatibleexpansionlemma} to the pairs
$$F\tau_{X_1}=F\lambda_{Y_1}\qquad F\tau_{X_2}=F\lambda_{Y_2}$$
to obtain equal words $$\tau_{X_1}'=\lambda_{Y_1}'\qquad \tau_{X_2}'=\lambda_{Y_2}'$$ where $\tau_{X_i}'$ is obtained by applying expansion moves (in the sense of Definition \ref{movesdefinition}) on $\tau_{X_i}$,
and $\lambda_{Y_i}'$ is obtained by applying expansion moves on $\lambda_{Y_i}$, for each $1\leq i\leq 2$.
Note that these words are obtained by iteratively applying expansion moves to percolating elements of $\tau_{X_1}, \lambda_{Y_1},\tau_{X_2}, \lambda_{Y_2}$, and their offspring.
It follows that there are subwords of $\tau_{X_i}'$ that are obtained from applying expansion moves on $\tau_{X_1\cap X_2}, \tau_{X_i\setminus X_j}$ respectively, for $1\leq i\leq 2$ and $j\in \{1,2\}\setminus \{i\}$.
The same for $\lambda_{Y_1}',\lambda_{Y_2}'$ and $\lambda_{Y_1\cap Y_2}, \lambda_{Y_1\setminus Y_2},\lambda_{Y_2\setminus Y_1}$.

Recall that words obtained from applying expansion moves (in the sense of Definition \ref{movesdefinition}) on a given $Y$-word $y_{s_1}^{t_1}...y_{s_n}^{t_n}$, where $s_1,...,s_n$ are independent and $t_1,...,t_n\in \{\pm 1\}$,
are in bijective correspondence with finite $3$-ary forests with $n$ roots. (Recall the discussion from subsection \ref{Omega}).
Hence we can perform expansion moves to obtain a common refinement for the subwords from the previous paragraph, so that the proof of the claim follows.

It follows that there are sets $X_1,...,X_{l}\subseteq \{1,...,n\}$ and $Y_1,...,Y_l\subseteq \{1,...,m\}$ that satisfy the following.
\begin{enumerate}
\item $X_1,...,X_{l}$ are pairwise disjoint and $Y_1,...,Y_l$ are pairwise disjoint.
\item $F\tau_{X_i}=F\lambda_{Y_i}$ for each $1\leq i\leq l$.
\item Whenever $F\tau_X=F\lambda_Y$, it holds that $X$ is a union of some sets in $\{X_1,...,X_l\}$ and $Y$ is a union of sets in $\{Y_1,...,Y_l\}$ with the same indices.
\end{enumerate} 
(Note that it may be the case that $l=0$, and so $F\tau_X\neq F\lambda_Y$ for any $X\subseteq \{1,...,n\}, Y\subseteq \{1,...,m\}$).
Our Lemma follows from an application of Lemma \ref{compatibleexpansionlemma} on the pairs $F\tau_{X_i}=F\lambda_{Y_i}$ for each $1\leq i\leq l$.
\end{proof}

\begin{defn}
Let $\tau_1=y_{s_1}^{t_1}...y_{s_n}^{t_n}$ and $\tau_2=y_{u_1}^{v_1}...y_{u_m}^{v_m}$ be special forms.
$\tau_1,\tau_2$ are said to be \emph{consecutive}, if $s_n,u_1$ are consecutive.
$\tau_1,\tau_2$ are said to be \emph{alternating} if they are consecutive, and also satisfy that $v_1=-t_n$. 
It follows from the definition that $\tau_1,\tau_2$ are alternating if and only if the product word $\tau_1\tau_2$ is a special form.
A list $\tau_1,...,\tau_n$ of special forms is said to be alternating if each pair $\tau_i,\tau_{i+1}$ is alternating for $1\leq i\leq n-1$.
It follows that $\tau_1,...,\tau_n$ is alternating if and only if the word $\tau_1...\tau_n$ is a special form.
\end{defn}

Consider $C\in \mathcal{H}$ described at base $F$ and with parameters $\tau_1,...,\tau_n$.
Observe that for a nonempty set $X\subseteq \{1,...,n\}$, there is an edge connecting $F$ and 
$F\tau_X$ if and only if $X=\{i,i+1,...,j\}$, for some $i,j$, and satisfies that $\tau_i,\tau_{i+1},...,\tau_{j}$
are alternating.
This motivates the following. 

\begin{defn}
Let $\tau_1,...,\tau_n$ be a sorted list of special forms.
A subset $$\{\tau_{i},\tau_{i+1},...,\tau_{j}\}\subseteq \{\tau_1,...,\tau_n\}$$
is said to be an \emph{alternating block} if $\tau_{i},\tau_{i+1},...,\tau_{j}$ are alternating.
We abuse notation and also refer to the subset of indices $\{i,i+1,...,j\}\subseteq \{1,...,n\}$ as an alternating block.
This special usage will be clear from the context.

Given the sorted list above, a \emph{block decomposition}
of $X$ is a partition formed by the sets 
$$\{1,...,l_1\}, \{l_1+1,l_1+2,...,l_2\},...,\{l_{k}+1,...,n\}$$
where each set is an alternating block and $1\leq l_1<...<l_k<n$.
Clearly, any sorted list admits a unique partition into \emph{maximal alternating blocks}.
This is called the \emph{maximal block decomposition} of the sorted list. 
\end{defn}

\begin{example}
Consider the sorted list $y_{010},y_{011}^{-1}, y_{10}^{-1},y_{110}$.
The maximal block decomposition of this is $$\{y_{010},y_{011}^{-1}\}, \{y_{10}^{-1},y_{110}\}$$

Note that $y_{011}^{-1}, y_{10}^{-1}$ are consecutive (since $011,10$ are consecutive binary sequences),
but they are not alternating.
\end{example}

\begin{lem}\label{block}
Let $\tau_1,...,\tau_n$ and $\lambda_1,...,\lambda_m$ be sorted lists of special normal forms such that
$\tau_{\{1,...,n\}}$ and $\lambda_{\{1,...,m\}}$ are equal as words.
Then for each maximal alternating block $U\subseteq \{1,...,n\}$ of the former,
there is a maximal alternating block $V\subseteq \{1,...,m\}$ of the latter
such that $\tau_U=\lambda_V$.
In other words, there is a natural bijection between the maximal block decompositions of $\tau_1,...,\tau_n$ and $\lambda_1,...,\lambda_m$.
\end{lem}

\begin{proof}
We read both words from left to right and partition them as we go along.
\end{proof}

By definition, the set of nodes of a graph in $\mathcal{H}$ naturally forms a Boolean algebra
in the sense of an algebra of subsets of a given finite set.
We recall the analogous notion for a collection of subsets.

\begin{defn}
Let ${\bf X}$ be a collection of subsets of the set $\{1,...,n\}$.
We say that ${\bf X}$ is \emph{Boolean}, if there are elements $X_1,...,X_k\in {\bf X}$ such that:
\begin{enumerate}
\item $X_1,...,X_k$ are pairwise disjoint.
\item ${\bf X}=\{\bigcup_{i\in Z} X_i\mid Z\subseteq \{1,...,k\}\}$.
\end{enumerate}
Under the partial ordering of inclusion, the sets $X_1,...,X_k$ are the minimal elements of ${\bf X}$.
The minimal elements of a Boolean collection will be referred to as the \emph{atoms} of the Boolean collection.
Any subcollection ${\bf X}\subseteq 2^{\{1,...,n\}}$ that is closed under taking unions and set difference
is a Boolean subcollection.
\end{defn}

\begin{lem}\label{compboolean}
Let $\tau_1,...,\tau_n$ and $\lambda_1,...,\lambda_m$ be a compatible pair of sorted lists.
Let $${\bf X}=\{X\subseteq \{1,...,n\}\mid  \tau_X=\lambda_Y\text{ for some } Y\subseteq \{1,...,m\}\}$$
$${\bf Y}=\{Y\subseteq \{1,...,m\}\mid  \tau_X=\lambda_Y\text{ for some } X\subseteq \{1,...,n\}\}$$
Then both ${\bf X},{\bf Y}$ are Boolean subcollections.
\end{lem}

\begin{proof}
We show this for ${\bf X}$, the proof for ${\bf Y}$ is the same.
In what follows below equality denotes equality as words.
Let $X_1,X_2\subseteq {\bf X}$.
Then there are $Y_1,Y_2\in {\bf Y}$ such that $\tau_{X_1}=\lambda_{Y_1}$ and $\tau_{X_2}=\lambda_{Y_2}$.
It follows that $\tau_{X_1\cup X_2}=\lambda_{Y_1\cup Y_2}$ and $\tau_{X_1\setminus X_2}=\lambda_{Y_1\setminus Y_2}$.
Since ${\bf X}$ is closed under unions and set difference, we are done.
\end{proof}

Next we demonstrate that subgraphs of a given graph $C\in \mathcal{H}$ that are also elements of $\mathcal{H}$
correspond precisely to certain Boolean subcollections.

\begin{lem}\label{boolean}
Let $C\in \mathcal{H}$ and let $C'\subset C$ be a subgraph that is also an element of $\mathcal{H}$.
Then $C$ admits a description with bases at $F\tau$ and parameters $\tau_1,...,\tau_n$ such that
the following holds.
There is a Boolean collection ${\bf X}\subset 2^{\{1,...,n\}}$ such that $C'$ admits the following description.
\begin{enumerate}
\item It is based at $F\tau$.
\item The elements of the set $$\{\tau_X\mid X\text{ is an atom of }{\bf X}\}$$
are all special forms that are the parameters.
\end{enumerate}
\end{lem}

\begin{proof}
By the $G$-action, we can assume that the trivial coset $F$ lies in $C\cap C'$.
Using Lemma \ref{pairsorted}, we find a compatible pair of sorted lists   $\tau_1,...,\tau_n$ and $\nu_1,...,\nu_m$
that are parameters for descriptions of $C,C'$ respectively with both bases at $F$.
Let $${\bf X}=\{X\subseteq \{1,...,n\}\mid F\tau_X= F\lambda_Y\text{ for some }Y\subseteq \{1,...,m\}\}$$
Since the sorted lists are compatible, $$F\tau_X=F\lambda_Y\implies \tau_X=\lambda_Y$$ and hence $${\bf X}=\{X\subseteq \{1,...,n\}\mid \tau_X= \lambda_Y\text{ for some }Y\subseteq \{1,...,m\}\}$$
Thanks to Lemma \ref{compboolean}, ${\bf X}$ is a Boolean subcollection.
In particular, $$\{\tau_X\mid X\text{ is an atom of }{\bf X}\}$$
consists of special forms that are the parameters of $C'$ with base $F$.
\end{proof}

\begin{remark}
The following converse of the above Lemma follows immediately from the definitions.
Let $C$ be the cluster described with base $F\tau$ and parameters $\tau_1,...,\tau_n$.
Consider a Boolean collection ${\bf X}\subset 2^{\{1,...,n\}}$
which satisfies that the elements of $$\{\tau_X\mid X\text{ is an atom of }{\bf X}\}$$
are all special forms.
Then there is a subgraph $C'\subset C$ which lies in $\mathcal{H}$, and is described with base $F\tau$ and parameters in the set above.
\end{remark}

{\bf Proof of Proposition \ref{closedintersection}}

\begin{proof}
Let $C_1,C_2$ be our clusters in $\mathcal{H}$.
Thanks to Lemmas \ref{equivalentparametrisation} and \ref{pairsorted}, we can assume that $C_1,C_2$ are both described with base at the trivial coset $F$ and parametrized by a compatible pair of sorted lists
$\tau_1,...,\tau_n$ and $\nu_1,...,\nu_m$ respectively.
Let $${\bf X}=\{X\subseteq \{1,...,n\}\mid F\tau_X\in C_1\cap C_2\}$$ $$=\{X\subseteq \{1,...,n\}\mid F\tau_X=F\nu_Y\text{ for some }Y\subseteq \{1,...,m\}\}$$
$$=\{X\subseteq \{1,...,n\}\mid \tau_X=\nu_Y\text{ for some }Y\subseteq \{1,...,m\}\}$$
Thanks to Lemma \ref{compboolean}, ${\bf X}$ is a Boolean subcollection.

We will show that each atom $U$ of ${\bf X}$ satisfies that 
$U$ is an alternating block, i.e. $\tau_U$ is a special form.
Let $V\subseteq \{1,...,m\}$ be such that 
$\tau_U=\nu_V$.
Let $U=\{u_1,...,u_k\}$ and $V=\{v_1,...,v_l\}$ be the elements listed in increasing order.
Applying Lemma \ref{block} to the sorted lists (of normal special forms)
$$\tau_{u_1},\tau_{u_2}...,\tau_{u_k}\qquad \nu_{v_1},\nu_{v_2},...,\nu_{v_l}$$
we conclude that any maximal alternating block $U_1\subseteq U$ corresponds to a maximal alternating block
$V_1\subseteq V$ satisfying equality of words $\tau_{U_1}=\nu_{V_1}$.
But this means that $F\tau_{U_1}=F\nu_{V_1}$ and so $F\tau_{U_1}\in C_1\cap C_2$.
Therefore $U_1\in {\bf X}$.
By minimality of $U$ it follows that $U_1=U$ and so
$U$ is an alternating block.

This means that $C_1\cap C_2$ is described with base $F$ and parameters in the set
$$\{\tau_U\mid U\text{ is an atom of }{\bf X}\}$$
and hence is an element of $\mathcal{H}$ as desired.
\end{proof}

\subsection{Graphs in $\mathcal{H}$ as $1$-skeletons of clusters}

We demonstrate that an element $C\in \mathcal{H}$ is the $1$-skeleton of a cluster in the sense of Definition \ref{cluster}.
This identification emerges in a natural way by considering descriptions of graphs in $\mathcal{H}$ that we call \emph{proper descriptions}.
A proper description for $C$ involves a certain elementary condition on the parameters which we describe below.

\begin{defn}
Let $\tau_1,...,\tau_n$ be a sorted list of special forms.
We say that the sorted list is \emph{proper}, if whenever a pair $\tau_i,\tau_{i+1}$ is consecutive,
then it is also alternating.
\end{defn}

Observe that given any pair of consecutive special forms $\tau_1,\tau_2$,
exactly one of $\tau_1,\tau_2^{-1}$ or $\tau_1,\tau_2$ is alternating.
(Recall the convention that $\tau^{-1}=y_{s_1}^{-t_1}...y_{s_n}^{-t_n}$ whenever $\tau=y_{s_1}^{t_1}...y_{s_n}^{t_n}$.)
Indeed, given any sorted list of special forms $\tau_1,...,\tau_n$,
there are $s_1,...,s_n\in \{1,-1\}$ such that
$\tau_1^{s_1},...,\tau_n^{s_n}$ is proper.
(To see this, perform an elementary induction on $n$.)

\begin{lem}\label{proper}
Any element $C\in \mathcal{H}$ admits a description where the parameters are proper.
Moreover, we can also arrange it so that the parameters are all normal forms.
\end{lem}

\begin{proof}
Let $C$ be described with base $F\tau$ where $\tau$ is a $Y$-word, and parameters $\tau_1,...,\tau_n$.
Let $s_1,...,s_n\in \{1,-1\}$ be such that $\tau_1^{s_1},...,\tau_n^{s_n}$ is proper.
Then $C$ admits the following description:
\begin{enumerate}
\item Base $F\tau_X \tau$, where $X=\{i\in \{1,...,n\}\mid s_i=-1\}$.
\item Parameters $\tau_1^{s_1},...,\tau_n^{s_n}$.
\end{enumerate}
Finally, by applying a sequence of contraction moves to $\tau_1^{s_1},...,\tau_n^{s_n}$,
we obtain a proper list of special normal forms.
To see this, one simply checks the following.
If $\nu_1',\nu_2'$ are obtained from $\nu_1,\nu_2$ respectively by applying contraction and/or expansion moves,
then $\nu_1',\nu_2'$ are alternating if and only if $\nu_1,\nu_2$ are alternating.
\end{proof}

\begin{defn}
We call a description of $C\in \mathcal{H}$ with proper parameters as a \emph{proper description}.
\end{defn}

The advantage of a proper description for $C$ is that the edge relation between nodes of $C$ is easier to characterise.

\begin{lem}\label{edgerel}
Let $C\in \mathcal{H}$ be described with base $F\tau$ where $\tau$ is a $Y$-word, and proper parameters $\tau_1,...,\tau_n$.
Then the nodes $F\tau_X\tau,F\tau_Y\tau$ for $X,Y\subseteq \{1,...,n\}$ form an edge if and only if one of the following holds:
\begin{enumerate}
\item $X\subset Y$ and $\tau_{Y\setminus X}$ is a special form. 
\item $Y\subset X$ and $\tau_{X\setminus Y}$ is a special form.
\end{enumerate} 
\end{lem}

\begin{proof}
If $F\tau_X\tau,F\tau_Y\tau$ form an edge, this means that
$$\tau_X\tau_Y^{-1}=\prod_{i\in X\setminus Y} \tau_i \prod_{j\in Y\setminus X}\tau_j^{-1}$$
is a special form.
If both $X\setminus Y$ and $Y\setminus X$ are nonempty, then there must be an alternating pair of the form $\tau_i,\tau_{i+1}^{-1}$
or $\tau_i^{-1},\tau_{i+1}$ in the word above.
In either case, this contradicts the assumption that the parameters are proper. 
The converse follows immediately from the definition of the edge relation in $X^{(1)}$.
\end{proof}

Now we are ready to provide the identification between elements of $\mathcal{H}$ and $1$-skeletons of clusters.

\begin{prop}\label{identification}
Let $C\in \mathcal{H}$ be described by base $F\tau$ where $\tau$ is a $Y$-word, and proper parameters $\tau_1,...,\tau_n$. 
Then $C$ is isomorphic to the $1$-skeleton of an $n$-cluster in the sense of Definition \ref{cluster}.
\end{prop}

\begin{proof}
Without loss of generality (thanks to Lemma \ref{clusteraction}) 
we can assume that $C$ is based at the trivial coset.
 Define $${\bf Y}=\{i\in \{1,...,n-1\}\mid \tau_i\tau_{i+1}\text{ is a special form}\}$$
Now define the hyperplane arrangement $\mathcal{A}$ in $\mathbf{R}^n$ consisting of the following hyperplanes:
\begin{enumerate}
\item $x_i=0, x_i=1$, $1\leq i\leq n$.
\item $x_i=x_{i+1}$ for $i\in {\bf Y}$.
\end{enumerate}
Clearly, $\mathcal{A}$ is admissible.
We claim that the $1$-skeleton of $\mathcal{C(A)}$ is isomorphic to $C$.

The $0$-cells of $\mathcal{C(A)}$ are naturally in bijection with subsets of $\{1,...,n\}$
as follows.
A subset $X\subseteq \{1,...,n\}$ corresponds to the $0$-cell
given by the coordinates $(y_1,...,y_n)$ where $y_i=0$ if $i\notin X$ and $y_i=1$ if $i\in X$.

Observe that the $1$-cells of $\mathcal{C(A)}$ are in a natural bijection with 
partitions of $\{1,...,n\}$ into three sets $X_1,X_2,X_3$ such that
 $X_2=\{j,j+1,...,k\}\subset \{1,...,n\}$ satisfies that for each $j\leq i\leq k-1$, $i\in {\bf Y}$.
The $1$-cell corresponding to such a pair is the affine line segment connecting the $0$-cells
$(y_1,...,y_n)$ and $(z_1,...,z_n)$ given by the following coordinates:
$$(y_1,...,y_n)\qquad y_i=0\text{ if }i\in X_1\cup X_2\text{ and } y_i=1\text{ if }i\in X_3$$
$$(z_1,...,z_n)\qquad z_i=0\text{ if }i\in X_1\text{ and } z_i=1\text{ if }i\in X_2\cup X_3$$
This is precisely the intersection of the following collection of hyperplanes with $[0,1]^n$:
$$\{ \{x_i=0\}\mid i\in X_1\}\cup \{\{x_i=1\}\mid i\in X_3\}\cup \{\{x_i=x_j\}\mid i,j\in X_2\}$$

We now define the graph isomorphism between $\mathcal{C(A)}^{(1)}$ and $C$.
First, we identify the $0$-cell given by $X\subseteq \{1,...,n\}$ in $\mathcal{C(A)}$ with
the $0$-cell $F\tau_X$ of $C$.
Then we identify the $1$-cell of $\mathcal{C(A)}$ given by a triple $X_1,X_2,X_3$ (satisfying the above)
with the $1$-cell connecting the pair $F\tau_{X_3},F\tau_{X_2\cup X_3}$.
This identification provides the isomorphism between $\mathcal{C(A)}^{(1)}$ and $C$.
\end{proof}

We now show that the identification above is independent of the choice of proper description.

\begin{lem}\label{choiceproper}
Let $C\in \mathcal{H}$ and let $\mathcal{C(A)}$ be the cluster identified with a proper description of $C$ in Proposition \ref{identification}.
Then this identification is the same for any other choice of proper parameters describing $C$.
\end{lem}

\begin{proof}
By $G$-equivariance, we can assume the the base of the given description is the trivial coset,
with proper parameters $\tau_1,...,\tau_n$.
Let $A_1,...,A_k\subseteq \{1,...,n\}$ be the maximal block decomposition for $\{\tau_1,...,\tau_n\}$.

With Lemmas \ref{clusterdescription} and \ref{equivalentparametrisation} in mind, 
observe that any proper description of $C$ (up to replacing equivalent special forms as parameters) is determined by a set of numbers $s_1,...,s_n\in \{1,-1\}$
such that:
\begin{enumerate}
\item $s_i=s_j$ whenever $i,j\in A_l$ for some $1\leq l\leq k$.
\item The base coset is $F\tau_X$ where $X=\{1\leq i\leq n\mid s_i=-1\}$.
\item The parameters are $\tau_1^{s_1},...,\tau_n^{s_n}$. 
\end{enumerate}

Let $\mathcal{A}_1$ be the hyperplane arrangement in $\mathbf{R}^n$ given by:

\begin{enumerate}
\item $\{x_i=0\}, \{x_i=1\}$, $1\leq i\leq n$.
\item $\{x_i=x_{i+1}\}$ for $i\in \{1\leq j\leq n-1\mid \tau_j\tau_{j+1}\text{ is a special form}\}$.
\end{enumerate}

Let $\mathcal{A}_2$ be the hyperplane arrangement in $\mathbf{R}^n$ given by:

\begin{enumerate}
\item $\{x_i=0\}, \{x_i=1\}$, $1\leq i\leq n$.
\item $\{x_i=x_{i+1}\}$ for $i\in \{1\leq j\leq n-1\mid \tau_j^{s_j}\tau_{j+1}^{s_{j+1}}\text{ is a special form}\}$.
\end{enumerate}

It follows that both arrangements are the same and hence describe the same cluster.
Considering symmetries of this $n$-cluster, we are done.
\end{proof}

\subsection{Higher dimensional cells in $X$}

Proposition \ref{identification} provides an identification of an element $C\in \mathcal{H}$
with the $1$-skeleton of an $n$-cluster $\mathcal{C(A)}$.

\begin{defn}
Given $C\in \mathcal{H}$, let $\mathcal{C(A)}$ be the $n$-cluster whose $1$-skeleton
is identified with $C$ in Proposition \ref{identification}. 
A \emph{filling} $\mathbf{C}$ of $C$ is obtained by adding the higher cells of $\mathcal{C(A)}$ to $C$ in $X^{(1)}$.
The complex $X$ is obtained from $X^{(1)}$ by ``filling" each graph in $\mathcal{H}$.
In other words, $$X=\bigcup_{C\in \mathcal{H}}\mathbf{C}$$
\end{defn}

We show that the fillings are well defined and indeed produce a CW complex $X$
upon which $G$ acts.

\begin{lem}\label{fillingequivariant}
The fillings of $\mathcal{H}$ are $G$-equivariant. 
In other words, for each $C\in \mathcal{H},\nu \in G$, if $C_1=C\cdot \nu$ then $\mathbf{C}\cdot \nu=\mathbf{C_1}$ 
In particular, the $G$-action on $X^{(1)}$ naturally extends to the $G$-action on $X$.
\end{lem}

\begin{proof}
Let $C$ be described with base $F\tau$, where $\tau$ is $Y$-word, and proper parameters $\tau_1,...,\tau_n$. 
Let $\nu\in G$.
Then $C\cdot \nu$ admits a description with base $F(\tau\nu)$ and proper parameters $\tau_1,...,\tau_n$.
The hyperplane arrangements $\mathcal{A}_1$ for $C$ and $\mathcal{A}_2$ for $C\cdot \nu$
as described in the proof of Proposition \ref{identification} are both:
\begin{enumerate}
\item $\{x_i=0\}, \{x_i=1\}$, $1\leq i\leq n$.
\item $\{x_i=x_{i+1}\}$ for $i\in \{1\leq j \leq n-1\mid \tau_j\tau_{j+1}\text{ is a special form}\}$.
\end{enumerate}
Therefore, the group action preserves the identification.
\end{proof}

We end the section by demonstrating that $X$ is indeed a CW complex.
This follows immediately once we combine Proposition \ref{closedintersection} with the following.

\begin{prop}\label{subclusterfilling}
Let $C_1,C_2\in \mathcal{H}$ such that $C_1\subset C_2$.
Then $\mathbf{C_1}$ agrees with the subcomplex of $\mathbf{C_2}$ determined by $C_1\subset \mathbf{C_2}$.
\end{prop}

\begin{proof}
By considering the $G$-action, we assume without loss of generality that $C_1,C_2$ both contain the trivial coset as a node.
Let $C_2$ be described with base $F$ and parameters $\tau_1,...,\tau_n$.
By Lemma \ref{boolean}, there is a Boolean collection ${\bf X}\subset 2^{\{1,...,n\}}$
such that $C_1$ admits the following description.
\begin{enumerate}
\item Base at $F$.
\item The elements of the set $\{\tau_X\mid X\text{ is an atom of }{\bf X}\}$
are all special forms that are the parameters.
\end{enumerate}

Let $X_1,...,X_k\subset \{1,...,n\}$ be the minimal elements of this Boolean collection appearing in increasing order.
If $i,i+1\in X_j$ for some $1\leq j\leq k$, then $\tau_i,\tau_{i+1}$
are alternating.
Using this fact, it is easy to find numbers $s_1,...,s_n$ such that:
\begin{enumerate}
\item $\tau_1^{s_1},...,\tau_n^{s_n}$ is proper.
\item $s_i=s_{i+1}$ if $i,i+1\in X_j$ for some $1\leq j\leq k$.
\end{enumerate}

In particular, the list $\tau_{X_1}^{t_1},...,\tau_{X_k}^{t_k}$ is proper,
where $t_j$ equals $s_l$ for any $l\in X_j$.
This provides proper descriptions of $C_1,C_2$ with base $F\tau_Y$ where $Y=\{i\in \{1,...,n\}\mid s_i=-1\}$,
and proper parameters $\tau_1^{s_1},...,\tau_n^{s_n}$ and $\tau_{X_1}^{t_1},...,\tau_{X_k}^{t_k}$ respectively.

Let $\mathcal{A}_1$ be the arrangement of hyperplanes:
\begin{enumerate}
\item $\{x_i=0\},\{x_i=1\}$, $1\leq i\leq n$.
\item $\{x_i=x_{i+1}\}$ whenever $\tau_i\tau_{i+1}$ is a special form.
\end{enumerate}

Now define a $k$-dimensional flat $U$ of $\mathcal{A}_1$ as the intersection of the following hyperplanes.
\begin{enumerate}
\item $\{x_i=0\}$ for $i\notin \bigcup_{1\leq i\leq k}X_k$.
\item $\{x_i=x_{i+1}\}$ with $i$ for which there is a $1\leq j\leq k$ such that $i,i+1\in X_j$.
\end{enumerate}

Let $\mathcal{A}_2$ be the arrangement of hyperplanes obtained from restricting the arrangement $\mathcal{A}_1$
to the flat $U$.
$\mathcal{A}_2$ is naturally isomorphic to the arrangement
in $\mathbf{R}^k$ given by:
\begin{enumerate}
\item $\{x_i=0\},\{x_i=1\}$, $1\leq i\leq k$.
\item $\{x_i=x_{i+1}\}$ for $i$ such that $\tau_{X_i}\tau_{X_{i+1}}$ is a special form.
\end{enumerate}  
This proves that the restriction of $\mathbf{C_2}$ to $C_1$ is indeed equal to $\mathbf{C_1}$.
\end{proof}

\section{The action of $G$ on $X$}

For the rest of the article, we shall identify elements of $\mathcal{H}$ with their fillings,
and denote these as clusters.
So given $C\in \mathcal{H}$, we shall simply use $C$ to denote the filled cluster.
In this section we study the action of $G$ on $X$.
We prove the following:

\begin{prop}\label{stabilizer}
The stabilizer $Stab_G(e)$ of any cell $e$ of $X$ is a group of type $\mathbf{F}_{\infty}$.
More particularly, it is a finite product of copies of the groups $F_3$ and $F$.
\end{prop}

\begin{prop}\label{quotient}
The quotient $X/G$ has finitely many cells in each dimension.
\end{prop}

\subsection{stabilizers of cells}

Given any $n$-cell $e$ in $X$, by the construction of $X$ there is an $n$-cluster $C\in \mathcal{H}$
that contains $e$.
Let $C$ be described with base $F\tau$, where $\tau$ is a $Y$-word and proper parameters $\tau_1,...,\tau_n$.
Then the cluster $C_1=C\cdot \tau^{-1}$ admits a description with base $F$ and the same parameters $\tau_1,...,\tau_n$ as $C$.
It follows that there is an $n$-cell $e'$ of $C_1$ such that $e'=e\cdot \tau^{-1}$, and hence $Stab_G(e)\cong Stab_G(e')$.
So it suffices to understand the stabilizer of $e'$.
For the rest of this subsection we fix the description of $C_1$ and $e'$.
First, we make an elementary observation.

\begin{lem}\label{stab1}
The closure of $e'$ contains $F$ and $F\tau_{\{1,...,n\}}$.
\end{lem}

\begin{proof}
Since the parameters are proper, the cluster $C_1\in \mathcal{H}$ is isomorphic to the cluster (in the sense of definition \ref{cluster})
described by the following hyperplanes in $\mathbf{R}^n$.
\begin{enumerate}
\item $\{x_i=1\}, \{x_i=0\}$, $1\leq i\leq n$.
\item $\{x_i=x_{i+1}\}$ for $i$ such that $\tau_i\tau_{i+1}$ is a special form.
\end{enumerate}
The closure of an $n$-cell is an intersection of certain half spaces of the form $$\{x_i\geq 0\}\qquad \{x_i\leq 1\}\qquad \{x_i\leq x_{i+1}\}\qquad \{x_i\geq x_{i+1}\}$$
All these half-spaces contain the points $(0,...,0)$ and $(1,...,1)$,
which correspond to the nodes $F$ and $F\tau_{\{1,...,n\}}$ of $C_1$.
\end{proof}

An important step in understanding the stabilizer $Stab_G(e')$ is to demonstrate that $Stab_F(e')=Stab_G(e')$.
The stabilizer $Stab_F(e')$ is much easier to understand, since we already have developed an understanding of the action of $F$ on
$\Omega$.
In order to show that $Stab_F(e')=Stab_G(e')$, we first need to prove the following generalisation of Proposition \ref{stabilizerspecialform}.

\begin{lem}\label{blockstab}
Let $\tau_1,...,\tau_n$ be a proper list of special forms as above,
and let $X_1,...,X_k\subset \{1,...,n\}$ be pairwise disjoint sets.
Then 
$$\bigcap_{1\leq i\leq k}Stab_F(F\tau_{X_i})$$ is a finite direct product of copies of $F$ and $F_3$.
\end{lem}

\begin{proof}
For a given $X_j$, let $Y_1,...,Y_{l}$ be the maximal block decomposition of $X_j$.
First we claim that $$Stab_F(F\tau_{X_j})=\bigcap_{1\leq i\leq l}Stab_F(F\tau_{Y_{i}})$$
It is clear that the latter is a subset of the former.
So we need to show that if $f\in Stab_F(F\tau_{X_j})$, then $f\in Stab_F(F\tau_{Y_i})$ for each $1\leq i\leq l$.

We show this for the case $l=2$.
The general case follows from a straightforward induction that uses the same idea as in the base case.
For notational simplicity, we denote $\tau_{Y_1}=\psi_1$ and $\tau_{Y_2}=\psi_2$.
Let $\psi_1',\psi_2'$ be special forms obtained by applying expansion moves on $\psi_1,\psi_2$ such that 
for each percolating element $y_s^t$ of $\psi_1$ or $\psi_2$, $f$ acts on $s$.
Let $\psi_1'',\psi_2''$ be special forms obtained by 
replacing each percolating element $y_s^t$ by $y_{s\cdot f}^t$.
Since $$F\psi_1\psi_2\cdot f=F\psi_1''\psi_2''=F\psi_1\psi_2$$
we can apply a sequence of contraction and expansion moves on $\psi_1''\psi_2''$ to obtain $\psi_1\psi_2$.

Now since $Y_{1},Y_2$ were assumed to be maximal alternating blocks, we know that $\psi_1,\psi_2$ are not alternating.
Therefore, it must be the case that we can apply a sequence of contraction and expansion moves to $\psi_1''$ to obtain $\psi_1$ 
and a sequence of contraction and expansion moves on $\psi_2''$ to obtain $\psi_2$.
It follows that $$f\in Stab_F(F\psi_1)\cap Stab_F(F\psi_2)$$
So our claim holds.

Now we apply the above claim to each $X_i$ in the sets $X_1,...,X_k$ from the statement of the Lemma. 
Reasoning along the lines of Proposition \ref{stabilizerspecialform}, we obtain that the restriction of the element $f$
on the support of each maximal alternating block of each $X_i$ must be a tree diagram in natural correspondence with a tree diagram of an element of $F_3$.
(We do not recall the notation and details of this here, as they have been spelled out in subsection \ref{Omega}.)
In this way, we obtain the required direct product decomposition.
\end{proof}

For our cell $e'$, we define $${\bf Y}=\{Y\subseteq \{1,...,m\}\mid F\tau_Y\text{ lies in the closure of } e'\}$$
Let ${\bf X}$ be the Boolean subcollection obtained by taking the closure of ${\bf Y}$ in $2^{\{1,...,m\}}$ under unions and set difference.
Further, let $X_1,...,X_k$ be the atoms of ${\bf X}$.
For the remainder of this subsection, we fix $e'$, ${\bf Y}, {\bf X}$ and $X_1,...,X_k$ as above.

\begin{lem}\label{booleanstab}
$$\bigcap_{Y\in {\bf Y}}Stab_F(F\tau_Y)=\bigcap_{1\leq i\leq k}Stab_F(F\tau_{X_i})$$
In particular, $$\bigcap_{Y\in {\bf Y}}Stab_F(F\tau_Y)$$ is a finite product of copies of $F$ and $F_3$, and is of type $\mathbf{F}_{\infty}$.
\end{lem}

\begin{proof}
Let $U,V\subseteq \{1,...,m\}$.
We claim that $$Stab_F(F\tau_U)\cap Stab_F(F\tau_V)$$ $$=Stab_F(F\tau_{U\setminus V})\cap Stab_F(F\tau_{V\setminus U})\cap Stab_F(F\tau_{U\cap V})$$

It is obvious that the latter is contained in the former.
It suffices to show the reverse containment. 

Let $\tau_i'$ be a special form obtained by performing expansion moves on $\tau_i$
such that for each percolating element $y_s^t$ of $\tau_i'$, $f$ acts on $s$.
Now let $\tau_i''$ denote special form obtained by replacing each percolating element $y_s^t$ of $\tau_i'$ with $y_{s\cdot f}^t$.
It follows from Lemma \ref{FactionspecialL} that $\tau_1'',...,\tau_n''$ is a sorted list of special forms.

It follows from our assumption that
$$F\tau_U\cdot f=F\tau_U''\qquad F\tau_V\cdot f=F\tau_V''$$

By our assumption the words $\tau_U'',\tau_V''$ are both products of independent special forms, and are equivalent to $\tau_U,\tau_V$ respectively.
This means that we can perform a sequence of contraction and expansion moves on $\tau_{U} '',\tau_V''$ to obtain $\tau_U,\tau_V$
respectively.
So indeed upon performing contraction and expansion moves on the subwords $\tau_{U\cap V}'', \tau_{U\setminus V}'',\tau_{V\setminus U}''$ we must obtain
$\tau_{U\cap V}, \tau_{U\setminus V},\tau_{V\setminus U}$ respectively.
In particular, it follows that $f$ fixes the cosets $F\tau_{U\cap V}, F\tau_{U\setminus V},F\tau_{V\setminus U}$.
This proves our assertion.

Using this idea, we conclude the proof of the main assertion
$$\bigcap_{X\in {\bf X}}Stab_F(F\tau_X)=\bigcap_{1\leq i\leq k}Stab_F(F\tau_{X_i})$$
using a standard induction argument of the Boolean algebra of sets ${\bf X}$ obtained from ${\bf Y}$ by taking the closure under unions and set difference.
\end{proof}

\begin{lem}\label{shortlem}
Given $\nu\in Stab_G(e')$, for each $X\in {\bf X}$ it holds that 
$$Supp(\tau_X)\cdot \nu=Supp(\tau_X)$$
\end{lem}

\begin{proof}
Given $\nu\in Stab_G(e')$, $\nu$ induces a permutation of the $0$-cells $\{F\tau_Y\mid Y\in {\bf Y}\}$.
In particular, a power $\nu^k$ fixes each $0$-cell in this set.
In particular, $\nu^k\in F$ since it fixes the trivial coset.

Since $\nu^k$ lies in $$\bigcap_{Y\in {\bf Y}}Stab_F(F\tau_Y)$$
by Lemma \ref{booleanstab} it also lies in
$$\bigcap_{X\in {\bf X}}Stab_F(F\tau_X)$$
This implies that for each $X\in {\bf X}$, $$Supp(\tau_X)\cdot \nu^k=Supp(\tau_X)$$
Since $\nu$ is a $k$-th root of $\nu$ in $Homeo_+(\mathbf{R})$,
it follows that $$Supp(\tau_X)\cdot \nu=Supp(\tau_X)$$
\end{proof}

\begin{lem}\label{StabinF}
$Stab_G(e')=Stab_F(e')=\bigcap_{1\leq i\leq k}Stab_F(F\tau_{X_i})$
\end{lem}

\begin{proof}
Let $\nu\in Stab_G(e')$.
Since $\nu$ permutes the $0$-cells incident to $e'$,
it follows that $F\cdot \nu=F\tau_X$ for some $X\in {\bf Y}$.
In particular, it holds that there is an $f\in F$ such that $\nu=f\tau_X$.
Now assume by way of contradiction that $X$ is nonempty.

By Lemma \ref{shortlem} $Supp(\tau_X)\cdot \nu=Supp(\tau_X)$.
It follows that $Supp(\tau_X)\cdot f=Supp(\tau_X)$.

Now let $\tau_X=y_{s_1}^{t_1}...y_{s_n}^{t_n}$.
Let $y_{u_1}^{v_1}...y_{u_m}^{v_m}$ be a special form obtained by applying expansion moves on $y_{s_1}^{t_1}...y_{s_n}^{t_n}$
such that:
\begin{enumerate}
\item $f$ acts on each $u_i$ for $1\leq i\leq m$.
\item $|u_i\cdot f|>|s_j|$ for each $1\leq i\leq m, 1\leq j\leq n$.
\end{enumerate}

Since the interval $Supp(\tau_X)$ is invariant under the action of $f$, it follows that $u_1\cdot f=s_10^l$ for some $l\in \mathbf{N}$.
Also, note that $v_1=t_1$.

So we obtain that $$F\tau_X\cdot (f\tau_X)=F(y_{s_1}^{t_1}...y_{s_n}^{t_n}) (f y_{s_1}^{t_1}...y_{s_n}^{t_n})$$
$$=F(y_{u_1}^{v_1}...y_{u_m}^{v_m})(fy_{s_1}^{t_1}...y_{s_n}^{t_n})$$
$$=F(y_{u_1\cdot f}^{v_1}...y_{u_m\cdot f}^{v_m})(y_{s_1}^{t_1}...y_{s_n}^{t_n})$$

Since $Supp(\tau_X)\cdot f=Supp(\tau_X)$, for the infinite binary sequence $s_10^{\infty}$ the associated calculation $\Lambda$ of the standard form
$$(y_{u_1\cdot f}^{v_1}...y_{u_m\cdot f}^{v_m})(y_{s_1}^{t_1}...y_{s_n}^{t_n})$$
contains two occurrences of $y^{t_1}=y^{v_1}$.
In particular, there is no potential cancellation and the exponent of this calculation equals $2$.
Since $F\tau_X\cdot (f\tau_X)$ is equal to a coset of the form $F\tau_U$ for some $U\in {\bf Y}$,
this is a contradiction.
Therefore, $X$ must be the empty set, and so $\nu\in F$.

To finish the proof, we must show the second equality in the statement of our Lemma.
We already know from Lemma \ref{shortlem} that $Supp(\tau_X)\cdot \nu=Supp(\tau_X)$ for each $X\in {\bf X}$.

For any $Y\in {\bf Y}$, $F\tau_Y\cdot \nu = F\tau_{Y'}$ where $Y'\in {\bf Y}$.
Now $Y$ is a union of minimal elements of ${\bf X}$. 
Since $\nu$ fixes the support of each $\tau_X$ for such a minimal $X$,
it must be the case that $F\tau_X\cdot \nu=F\tau_X$ and hence $F\tau_Y\cdot \nu =F\tau_Y$.
This means that $\nu$ fixes each $0$-cell in the closure of $e'$.
We conclude by applying Lemma \ref{booleanstab} to obtain
$$\bigcap_{Y\in {\bf Y}}Stab_F(F\tau_Y)=\bigcap_{1\leq i\leq k}Stab_F(F\tau_{X_i})$$
\end{proof}

{\bf Proof of Proposition \ref{stabilizer}}
From the discussion at the beginning of this subsection, it suffices to consider a cell of the form $e'$ described there.
We denote by ${\bf Y}, {\bf X}$ and $X_1,...,X_k$ the same sets as above.

By Lemma \ref{StabinF} we know that $$Stab_G(e')=\bigcap_{1\leq i\leq k} Stab_F(F\tau_{X_i})$$
Moreover, by Lemma \ref{blockstab} it follows that $\bigcap_{1\leq i\leq k} Stab_F(F\tau_{X_i})$
is a finite product of copies of $F$ and $F_3$.
These groups are of type $\mathbf{F}_{\infty}$, which is a property closed under taking finite products.

\subsection{The quotient $X/G$}

Now we show that the quotient $X/G$ has finitely many cells in each dimension.

{\bf Proof of Proposition \ref{quotient}}

We show this for a given dimension $n\in \mathbf{N}$.
Any $n$-cell is a maximal dimensional cell of an $n$-cluster by construction.
Given an $n$-cluster, it contains in its $G$-orbit an $n$-cluster described with base the trivial coset $F$ and proper parameters.
So it suffices to show that there are finitely many $G$-orbits of $n$-clusters described with base $F$ and proper parameters.

Let $C_1,C_2$ be $n$-clusters described with base $F$ and proper parameters $\tau_1,...,\tau_n$ and $\nu_1,...,\nu_n$ respectively.

Assume the following holds for each $1\leq i\leq n$.
\begin{enumerate}
\item The parity of the number of percolating elements in $\nu_i$ and $\tau_i$ is the same.
\item $\tau_i,\tau_{i+1}$ are consecutive if and only if $\nu_i,\nu_{i+1}$ are consecutive. (For $1\leq i\leq n-1$.)
\item If $y_s^t,y_u^v$ are the leftmost percolating elements occurring in $\lambda_i,\nu_i$ respectively,
then $t=u$.
\end{enumerate}

Then we can apply expansion moves to the respective special forms to obtain proper parameters $\tau_1',...,\tau_n'$ and $\nu_1',...,\nu_n'$ such that
in addition to the above, the following holds.
\begin{enumerate}
\item[(4)] For each $1\leq i\leq n$, the word length of $\tau_i',\lambda_i'$ is the same.
\end{enumerate}
Let $$y_{s_1}^{t_1}...y_{s_k}^{t_k}=\tau_{\{1,...,n\}}'\qquad y_{u_1}^{v_1}...y_{u_k}^{u_k}=\nu_{\{1,...,n\}}'$$
Given these conditions, it is an elementary exercise in the group $F$ to construct an element $f\in F$ such that $s_i\cdot f=u_i$ for each $1\leq i\leq k$.
It follows that $C_1\cdot f=C_2$.
Since the number of combinatorial conditions in $(1)-(3)$ above is finite, we are done.

\section{$X$ is simply connected.}
In this section we will show that $X$ is simply connected.
This is shown by providing an explicit homotopy for a given loop
to a trivial loop.
Recall that in \cite{LodhaMoore}, we provide a procedure that takes as input a word 
that represents the identity, and using the moves described in the preliminaries in Definition \ref{moves}, converts it into the empty word.
A direct consequence of the result is that we can use the same procedure to do the following.
If we input a word in the infinite generating set $$\{y_v,x_{u}\mid u,v\in 2^{<\mathbf{N}}, v\neq 0^k,1^k,\emptyset\}$$ of $G$ that represents an element of the subgroup $F$,
then using the moves we can convert it into a word in the generators $$\{x_{u}\mid u\in 2^{<\mathbf{N}}\}$$

In what follows the reader is not expected to have the knowledge of this procedure,
but needs to assume that indeed it is possible to do the above using the combinatorial moves described in
the preliminaries.  
We provide a visual interpretation of the combinatorial moves inside the complex.
We will show that each move provides a homotopy between two paths,
and combine this with the existence of the procedure to demonstrate that $X$ is simply connected.

 Given a loop $l$ in $X$, (up to homotopy) we can assume that $l$ is a path along
 the $1$-skeleton that is described as a path along a sequence of nodes
 $$F\tau,F(\nu_k \tau),F(\nu_{k-1} \nu_k \tau),...,F(\nu_1...\nu_k \tau),F\tau$$
 where 
 \begin{enumerate}
 \item $\tau$ is a $Y$-word.
 \item $\nu_1,...,\nu_k$ 
 are special forms.
 \item $F(\nu_1...\nu_k)=F\psi$ for a special form $\psi$.
 \end{enumerate}
 Note that in general, the special forms $\nu_1,...,\nu_k$ are completely arbitrary. 
 The product word $\nu_1...\nu_k$ (viewed as a word obtained by concatenating $\nu_1,...,\nu_k$) may not be a special form,
 and pairs $\nu_i,\nu_j$ are not necessarily independent.
 However, since the last two nodes in the sequence are connected by an edge
 we deduce that $F\nu_1...\nu_k=F\psi$ where $\psi$ is a special form.
 
 Let $l_1$ be the loop $l\cdot \tau^{-1}$.
 By construction of $X$, $l$ is homotopic to the trivial loop if and only if $l_1$ has this property.
 $l_1$ is described as a path along a sequence of nodes:
  $$F,F(\nu_k),F(\nu_{k-1} \nu_k),...,F(\nu_1...\nu_k),F$$
 where as above:
 \begin{enumerate}
 \item $\nu_1,...,\nu_k$ 
 are special forms.
 \item $F(\nu_1...\nu_k)=F\psi$ for a special form $\psi$.
 \end{enumerate}
 
 We describe a list of \emph{homotopies} that correspond to moves 
 in the analysis of standard forms.
 We list them below.
 In what follows, $\tau\in G$ is an arbitrary element.
 
 \begin{enumerate}
  \item The expansion move 
  $y_{\sigma}\to x_{\sigma}y_{\sigma \seq{0}}y_{\sigma \seq{10}}^{-1}y_{\sigma \seq{11}}$
 corresponds to a homotopy between the paths $F\tau,F(y_{\sigma}\tau)$ and 
 $$F\tau,F(y_{\sigma\seq{11}}\tau),F(y_{\sigma\seq{10}}^{-1}y_{\sigma\seq{11}}\tau),
 F(y_{\sigma \seq{0}}y_{\sigma \seq{10}}^{-1}y_{\sigma \seq{11}}\tau)$$
 Similarly the expansion move 
 $y_{\sigma}^{-1}\to x_{\sigma}^{-1}y_{\sigma\seq{00}}^{-1}y_{\sigma\seq{01}}
 y_{\sigma\seq{1}}^{-1}$
 corresponds to a homotopy between paths $F\tau,F(y_{\sigma}^{-1}\tau)$ and 
 $$F\tau,F(y_{\sigma\seq{1}}^{-1}\tau),
 F(y_{\sigma\seq{01}}y_{\sigma\seq{1}}^{-1}\tau),
 F(y_{\sigma\seq{00}}^{-1}y_{\sigma\seq{01}}y_{\sigma\seq{1}}^{-1}\tau)$$
 These paths are homotopic in $X$ because they are homotopic in a $3$-cluster of $X$.
 For instance, the first homotopy is performed in the cluster with base $F\tau$ and parameters $y_{\sigma \seq{0}},y_{\sigma \seq{10}}^{-1},y_{\sigma \seq{11}}$.
 
 \item Consider an edge of the form
 $F\tau,F(y_s^tf\tau)$.
This is an edge since it satisfies $(E2)$ of Definition \ref{edgerelation}, which can be seen by performing an expansion followed by rearrangement move on $y_s^tf$.
  Performing expansion moves on $y_s^t$ we obtain a special form 
 $y_{s_1}^{t_1}...y_{s_n}^{t_n}$ equivalent to  $y_s^t$ 
 such that $f$ acts on $s_1,...,s_n$. 
 The single edge path $F\tau,F(y_s^tf\tau)$ is homotopic to
 the path described by the sequence 
 $$F\tau,F(y_{s_n}^{t_n}f\tau),F(y_{s_{n-1}}^{t_{n-1}}y_{s_n}^{t_n}f\tau),...,
 F(y_{s_1}^{t_1}...y_{s_n}^{t_n}f\tau)$$
 in the $n$-cluster described with base $F\tau$ and parameters $y_{s_1}^{t_1},...,y_{s_n}^{t_n}$. 
 Applying the rearranging move
 $$y_{s_1}^{t_1}...y_{s_n}^{t_n}f=
 fy_{r_1}^{t_1}...y_{r_n}^{t_n}\qquad \text{ where }r_i=s_i\cdot f$$
 produces a new description of this path as 
 $$F,F(y_{r_n}^{t_n}\tau),F(y_{r_{n-1}}^{t_{n-1}}y_{r_n}^{t_n}\tau),...,
 F(y_{r_1}^{t_1}...y_{r_n}^{t_n}\tau)$$
 
 \item A commuting move $y_s^ty_p^q=y_p^qy_s^t$ for $t,q\in \{-1,1\}$ 
 and $s,p$ independent,
 corresponds to a homotopy between paths of the form $$F\tau,F(y_s^t\tau),F(y_p^qy_s^t\tau)$$
 and $$F\tau,F(y_p^q\tau),F(y_s^ty_p^q\tau)$$
 This can be performed in the complex since these paths are homotopic in the $2$-cluster
 described with base $F\tau$ and parameters $y_s^t,y_p^q$.
 
 \item A cancellation move $y_s^{\pm 1}y_s^{\mp 1}\to \emptyset$ corresponds to shrinking
 a path of the form $$F\tau,F(y_s^{\pm 1}\tau),F(y_s^{\mp 1}y_s^{\pm 1}\tau)$$ to a trivial path.
 This is possible since the path is obtained by traversing the edge $F\tau,F(y_s^{\pm 1}\tau)$ forwards and then backwards.
 \end{enumerate}
 
 Now we observe the following.
 
 \begin{lem}
 Consider a loop $l$ described by the path 
 $$F,F(\lambda_n),F(\lambda_{n-1}\lambda_n),...,
 F(\lambda_1...\lambda_n), F$$ 
 where $\lambda_1,...,\lambda_n$ are special forms.
 It is homotopic to a loop described by a path of the form
 $$F,F(y_{s_m}^{t_m}),F(y_{s_{m-1}}^{t_{m-1}}y_{s_m}^{t_m}),...,
 F(y_{s_1}^{t_1}...y_{s_m}^{t_m}),F$$
 \end{lem}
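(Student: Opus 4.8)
The plan is to refine $L$ one edge at a time: each ``diagonal'' edge of $L$, namely the edge joining $F(\lambda_{i+1}\cdots\lambda_n\tau)$ to $F(\lambda_i\lambda_{i+1}\cdots\lambda_n\tau)$, will be replaced by the edge-path obtained from a factorisation of the special form $\lambda_i$ into single percolating elements, and I will argue that each such replacement is realised by a homotopy taking place inside $X$.

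Fix $1\le i\le n$, put $\sigma_i=\lambda_{i+1}\cdots\lambda_n\tau$, and write $\lambda_i=y_{p_1}^{r_1}\cdots y_{p_l}^{r_l}$ with each $y_{p_j}^{r_j}$ a single percolating element (so $r_j\in\{1,-1\}$). By the definition of a special form, $p_1,\ldots,p_l$ are the leaves of a finite binary tree listed in $<_{\text{lex}}$-order with $r_{j+1}=-r_j$, so $y_{p_1}^{r_1},\ldots,y_{p_l}^{r_l}$ is a sorted list of pairwise independent special forms which is consecutive and of alternating sign; hence it parametrises an $l$-cluster $\Delta_i$ based at $\sigma_i$, and by the construction of Section $5$ the filled cluster $\bar\Delta_i$ is a topological $l$-cube, in particular simply connected, and it is a subcomplex of $X$. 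Now $\bar\Delta_i$ contains two edge-paths from $F\sigma_i$ to $F(\lambda_i\sigma_i)$: first, the single diagonal edge $\{F\sigma_i,F(\lambda_i\sigma_i)\}$ (this is a $1$-cell of $\Delta_i$ since $\lambda_i^{-1}$ is equivalent to a special form, and under Lemma \ref{mapsquare} it corresponds to the diagonal $z_1=\cdots=z_l$ of $\square^l$); and second, the coordinate path
$$F\sigma_i,\ F(y_{p_l}^{r_l}\sigma_i),\ F(y_{p_{l-1}}^{r_{l-1}}y_{p_l}^{r_l}\sigma_i),\ \ldots,\ F(y_{p_1}^{r_1}\cdots y_{p_l}^{r_l}\sigma_i)=F(\lambda_i\sigma_i),$$
whose consecutive vertices differ by left multiplication by a single percolating element and all of whose vertices (indexed by the suffix sets $\{j,\ldots,l\}$) lie in $\Delta_i$. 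Since $\bar\Delta_i$ is simply connected, these two paths are homotopic rel endpoints in $\bar\Delta_i$, hence in $X$.

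Carrying out this replacement for $i=n,n-1,\ldots,1$ in succession homotopes $L$ to the loop obtained by concatenating the coordinate paths; relabelling the single percolating elements that appear as $y_{s_m}^{t_m},\ldots,y_{s_1}^{t_1}$ (with $m=\sum_{i=1}^n l_i$, and noting that $\prod_j y_{s_j}^{t_j}=\prod_i\lambda_i$ as words, so the closing edge back to $F\tau$ is unchanged) puts the loop into the asserted form. The only step demanding care is the verification that the percolating elements occurring in each $\lambda_i$ genuinely constitute a legitimate cluster parametrisation; but this is immediate from the definition of a special form together with Lemma \ref{specialform}, so the rest is a routine appeal to the contractibility of filled clusters.
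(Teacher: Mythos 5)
Your proposal is correct and follows essentially the same route as the paper: the paper also observes that each edge $\{F\sigma_i,F(\lambda_i\sigma_i)\}$ is the cross-diagonal $1$-cell of the cluster at $\sigma_i$ parametrized by the single percolating elements of $\lambda_i$, and is therefore homotopic within that (filled, contractible) cluster to the coordinate edge-path, the only cosmetic difference being that the paper organizes the edge-by-edge replacement as an induction on $n$.
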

 \begin{proof}
 We show this by induction on $n$.
 For $n=1$, let $\lambda_1=y_{p_1}^{q_1}...y_{p_k}^{q_k}$.
 Now the $1$-cell $\{F\tau,F(\lambda_1\tau)\}$ is the cross diagonal $1$-cell
 of the $k$-cluster at $\tau$ parametrized by the special forms 
 $y_{p_1}^{q_1},...,y_{p_k}^{q_k}$.
 It follows that this $1$-cell is homotopic to the path
 $$F\tau,F(y_{p_k}^{q_k}\tau),F(y_{p_{k-1}}^{q_{k-1}}y_{p_k}^{q_k}\tau),...,
 F(y_{p_1}^{q_1}...y_{p_k}^{q_k}\tau)$$
 The inductive step is essentially the same as the base case,
 since by the inductive hypothesis 
 we replace the path $$F\tau,F(\lambda_n\tau),F(\lambda_{n-1}\lambda_n\tau),...,
 F(\lambda_2...\lambda_n\tau),F(\lambda_1...\lambda_n\tau)$$
 by a path $$F\tau,F(y_{u_l}^{v_l}\tau),F(y_{u_{l-1}}^{v_{l-1}}y_{u_l}^{v_l}\tau),...,
 F(\lambda_1y_{u_1}^{v_1}...y_{u_l}^{v_l}\tau)$$
 and then argue the last edge $$F(y_{u_1}^{v_1}...y_{u_l}^{v_l}\tau),
 F(\lambda_1 y_{u_1}^{v_1}...y_{u_l}^{v_l}\tau)$$ traversed in the path is homotopic to path 
 of a suitable sequence of edges as in the base case.

\end{proof} 
\begin{proposition}\label{simpconn}
The complex $X$ is simply connected.
\end{proposition}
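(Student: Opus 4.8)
The plan is to build an explicit null-homotopy out of the word reductions for $G$. By the preceding lemma, any loop of $X$ is homotopic to one parametrized by
$$F\tau,\ F(y_{s_m}^{t_m}\tau),\ F(y_{s_{m-1}}^{t_{m-1}}y_{s_m}^{t_m}\tau),\ \dots,\ F(y_{s_1}^{t_1}\cdots y_{s_m}^{t_m}\tau),$$
and since the loop closes up its last vertex is again $F\tau$, so $W:=y_{s_1}^{t_1}\cdots y_{s_m}^{t_m}$ is a $Y$-word (a product of basic letters $y_s^{\pm1}$) lying in $F$. The group $G$ acts on $X$ by homeomorphisms and hence carries null-homotopic loops to null-homotopic loops, so after translating by $\tau^{-1}$ we may assume $\tau=\emptyset$. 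It therefore suffices to show that the loop read off from a $Y$-word $W\in F$ as above is null-homotopic.

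To do this I would run the algorithm underlying Lemma \ref{nopotcan} on $W$: it converts $W$ into a standard form $f\lambda$ with no potential cancellations by a finite sequence of rearranging, expansion, commuting, cancellation, and (cosmetic, since all exponents are kept in $\{1,-1\}$) exponent-splitting substitutions. Reading the steps one at a time, each such substitution is realized by one of the homotopies $(1)$--$(4)$ recorded just above, each carried out inside a single cluster of dimension at most $3$ --- legitimate because a filled cluster is a topological cube and hence contractible. The key point is that an expansion $y_\sigma\to x_\sigma y_{\sigma\seq{0}}y_{\sigma\seq{10}}^{-1}y_{\sigma\seq{11}}$ (or its $t=-1$ analogue) applied at an arbitrary position of the current loop-word replaces the corresponding $1$-cell by the $3$-edge path of homotopy $(1)$: the letter $x_\sigma\in F$ produced by the expansion sits at the extreme left of the word and is absorbed into the left coset $F$ --- precisely because the vertices of $X^{(1)}$ are right cosets of $F$ --- so it never appears in the loop, and the subsequent rearranging substitutions, which only push $X$-letters leftward, become vacuous on the loop. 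The commuting and cancellation substitutions are the homotopies $(3)$ and $(4)$. Hence the loop associated to $W$ is homotopic to the loop associated to $f\lambda$, whose only $y$-letters are those of $\lambda$.

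Finally, since $W\in F$ and $f\lambda$ has no potential cancellations, Lemma \ref{taileq} forces $\lambda$ to be the empty word: otherwise a percolating element of $\lambda$ would yield a sequence $\sigma$ along which the calculation of $f\lambda$ is nontrivial with positive exponent, and then $f\lambda$ would act as $h y^n_u$ on a subinterval and could not lie in $F$. Thus all $y$-letters of $W$ get cancelled, the loop-word becomes empty, and the loop is the constant loop at $F$. Therefore $\pi_1(X,F)=1$, and as $X$ is connected, $X$ is simply connected.

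I expect the main obstacle to be the bookkeeping of the middle paragraph: verifying that homotopies $(1)$--$(4)$ can be inserted at an arbitrary edge of the loop, not only at its first edge; that the $X$-letters produced along the way are absorbed cleanly so that every intermediate loop-word is genuinely a product of basic $Y$-edges; and that a cancellation homotopy $(4)$ deletes a matched $y_s^t y_s^{-t}$ sub-path without disturbing the rest of the loop. These points are routine, but they are where the argument must be made precise.
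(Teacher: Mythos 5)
Your proposal is correct and follows essentially the same route as the paper: homotope the loop to one read off from a $Y$-word $W\in F$ based at $F$, then realize the word reduction by the basic homotopies $(1)$--$(4)$ inside low-dimensional clusters, with $X$-letters absorbed into the right cosets of $F$ and rearranging acting as reparametrization. The only (inessential) difference is the final word-theoretic step, where you invoke Lemma \ref{nopotcan} together with Lemma \ref{taileq} to force the $Y$-part to vanish, whereas the paper cites the reduction of a $Y$-word representing an element of $F$ to an $X$-word from the isomorphism proof in \cite{LodhaMoore}.
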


\begin{proof}
Let $L$ be a loop described as a path in $X^{(1)}$.
 By considering the group action we can assume that the loop $L$ begins and ends at
 $F$.
 By the previous Lemma this loop is homotopic to a loop of the form
 $$F,F(y_{s_n}^{t_n}),F(y_{s_{n-1}}^{t_{n-1}}y_{s_n}^{t_n}),...,
 F(y_{s_1}^{t_1}...y_{s_n}^{t_n})=F$$
 Since $F=F(y_{s_1}^{t_1}...y_{s_n}^{t_n})$ 
 it follows that the word $y_{s_1}^{t_1}...y_{s_n}^{t_n}$ represents an element of $F$.
 From \cite{LodhaMoore} we know that this word can be reduced to an
 $X$-word by applying a sequence of expansion, commuting, cancellation and rearranging substitutions.
 Since an application of each substitution produces a loop homotopic to $L$,
 we observe that this process provides an explicit homotopy between
 $L$ and the trivial loop.
\end{proof}

\section{$X$ is aspherical}

In this section, we shall demonstrate that the complex $X$ is a nonpositively curved cluster complex in the sense of Definition \ref{npccluster},
and hence is aspherical.
In particular, we will show the following.
For each finite subcomplex $Y$ of $X$ which is a union of clusters, there is a subcomplex ${\bf Y}$ of $X$ such that:
\begin{enumerate}
\item $Y\subset {\bf Y}$.
\item ${\bf Y}$ is homeomorphic to a nonpositively curved cube complex.
\end{enumerate}

We describe a process that takes as an input $Y$ and produces as an output ${\bf Y}$.
There are two main structural concepts defined and used in this section.
These are the notions \emph{parallel $1$-clusters} and \emph{orthogonal $1$-clusters}.
A key algorithmic ingredient is what we refer to as \emph{amplification} of clusters.
An amplification takes as an input a cluster, and produces as an output a larger cluster that contains the input as a subcluster.

In this section whenever we refer to a description of a $1$-cluster with base $F\tau$,
it shall be assumed that $\tau$ is a $Y$-word (possibly empty) (unless specified otherwise).
We shall write $F\tau, F\nu \tau$ to denote a $1$-cluster with base $F\tau$ and parameter $\nu$.
It is important that the reader keeps this in mind, since this will be crucial to formulate the definitions in this section.

\subsection{The parallel equivalence relation on the $1$-cells of $X$}

Consider the regular Euclidean cube $[0,1]^n\subset \mathbf{R}^n$.
Two $1$-dimensional faces, i.e. edges of the cube are parallel if they are segments of parallel lines in $\mathbf{R}^n$.
Now consider an $n$-cluster $C$, viewed as a CW subdivision of $[0,1]^n$.
The facial $1$-subclusters of $C$ are in natural correspondence with the closed edges of $[0,1]^n$.
So we may view a pair of facial $1$-subclusters of $C$ as \emph{parallel} if the corresponding closed edges of $[0,1]^n$ are 
parallel.
More concretely, let $C$ be described with base $F\tau$ and parameters $\tau_1,...,\tau_n$.
The set of facial $1$-subclusters of $C$ consists of:
$$\{F\tau_X\tau,F\tau_i\tau_X\tau\mid X\cup \{i\}\subseteq \{1,...,n\}, i\notin X\}$$

We say that two facial $1$-subclusters of $C$ are \emph{parallel}, if they are of the form:
$$F\tau_X\tau, F\tau_i(\tau_X\tau) \qquad  F\tau_Y\tau, F\tau_i(\tau_Y\tau)$$
where $X,Y\subseteq \{1,...,i-1,i+1,...,n\}$. 

We would like to extend this notion of parallel $1$-clusters to pairs that
are not necessarily facial $1$-subclusters of the same cluster.
As a motivating example, consider the situation below.

Let $C$ be as above,
but for simplicity of notation, let us assume that $\tau$ is the empty word,
and hence $C$ is described with base as the trivial coset and parameters $\tau_1,...,\tau_n$.
Let $$e_1=F\tau_X, F\tau_i\tau_X\qquad  e_2=F\tau_Y, F\tau_i\tau_Y$$
be parallel, facial $1$-subclusters of $C$.

Now consider a cluster $D$ described with base $F\tau_Y$
and parameters $\nu_1,...,\nu_m$ such that
$\nu_j=\tau_i$ for some $1\leq j\leq m$.
Consider a $1$-cell $$e_3=F(\nu_Z\tau_Y),F\nu_j(\nu_Z\tau_Y)$$ of $D$ such that $Z\subseteq \{1,...,j-1,j+1,...,m\}$.
Note that in this description of $D$
$$e_2=F\tau_Y,F\nu_j\tau_Y$$

It follows that $e_2,e_3$ are parallel, facial $1$-subclusters of the cluster $D$.
It is natural to view $e_1,e_3$ above as in a certain sense, parallel.
Indeed, there is a natural notion of \emph{parallel $1$-clusters} in $X$. 
This is precisely the transitive closure of the relation obtained by declaring $1$-clusters $e_1,e_2$ as parallel
if they are parallel, facial $1$-subclusters of a cluster.

We provide a more useful, concrete formulation of this definition below.
In this formulation, it is not immediately apparent that the resulting relation is an equivalence relation.
This will be shown in a subsequent Lemma.

\begin{defn}\label{parequiv}
Two $1$-clusters $e_1,e_2$ in $X$ are said to be \emph{parallel}
if they admit descriptions
$$e_1=F\tau_1, F\nu \tau_1\qquad e_2=F\tau_2, F\nu \tau_2$$
with a common parameter $\nu$ so that there is a sequence of special forms $\nu_1,...,\nu_n$ satisfying:
\begin{enumerate}
\item $\nu_i, \nu$ are independent for each $1\leq i\leq n$.
\item $\nu_1...\nu_n=\tau_2\tau_1^{-1}$. 
\end{enumerate}
\end{defn}

\begin{remark}\label{parequivrem}
In the above definition, it is not required that $\nu_1,...,\nu_n$ are independent,
sorted etc.  In fact in most cases they shall not be independent.

Note that the definition is symmetric. 
The special forms $\nu_1^{-1},...,\nu_n^{-1}$
satisfy that $\nu_i^{-1},\nu$ are independent and $\nu_n^{-1}...\nu_1^{-1}=\tau_1\tau_2^{-1}$.
Also, note that the definition is $G$-invariant.
That is, if $e_1,e_2$ are parallel then it follows from the definition that 
$e_1\cdot g, e_2\cdot g$ are parallel for any $g\in G$.

Finally, note that if the descriptions $$e_1=F\tau_1, F\nu \tau_1\qquad e_2=F\tau_2, F\nu \tau_2$$
satisfy Definition \ref{parequiv}, then so do the descriptions $$e_1=F(\nu \tau_1), F\nu^{-1} (\nu \tau_1)\qquad e_2=F(\nu \tau_2), F\nu^{-1}  (\nu \tau_2)$$
In fact, the same $\nu_1,...,\nu_n$ as above witness the conditions of the Definition.
Indeed it holds that $\nu_i, \nu^{-1}$ are independent for each $1\leq i\leq n$.
And also, $$\nu_1...\nu_n =\nu\nu^{-1}(\nu_1...\nu_n)= \nu (\nu_1...\nu_n)\nu^{-1}=\nu(\tau_2 \tau_1^{-1})\nu^{-1}=(\nu \tau_2) (\nu \tau_1)^{-1}$$
\end{remark}

Before we show that the parallel relation is an equivalence relation,
we show that the definition is independent of the choice of description in the following sense.

\begin{lem}\label{indparpar}
Let $e_1,e_2$ be parallel $1$-clusters in $X$.
Let $e_1=F\tau_1, F\nu\tau_1$ be any choice of description.
Then there is a description $e_2=F\tau_2,F\nu \tau_2$,
so that these descriptions satisfy Definition \ref{parequiv}.
\end{lem}

\begin{proof}
We shall find a $Y$-word $\tau_2$ such that $e_2$ admits a description
$e_2=F\tau_2,F\nu \tau_2$
satisfying the required property.
Since $e_1,e_2$ are parallel, by definition there are descriptions
$$e_1=F\psi_1,F\eta\psi_1\qquad e_2=F\psi_2, F\eta\psi_2$$
and special forms $\eta_1,...,\eta_n$
satisfying that  $\eta_i,\eta$ are independent and $\eta_1...\eta_n\psi_1=\psi_2$.
Thanks to the last paragraph of Remark \ref{parequivrem}, we can also find the above descriptions keeping the same base coset, so that $F\psi_1=F\tau_1$.

There is an $f\in F$ such that $f\psi_1=\tau_1$.
Let $\eta',\eta_1',...,\eta_n'$ be special forms obtained by applying expansion moves on 
$\eta,\eta_1,...,\eta_n$ respectively such that for each percolating element $y_s^t$ that occurs,
$f^{-1}$ acts on $s$.
Let $\eta'',\eta_1'',...,\eta_n''$ be special forms obtained by replacing each percolating element $y_s^t$ of $\eta',\eta_1',...,\eta_n'$ by $y_{s\cdot f^{-1}}^t$.

It follows that $$F\eta\psi_1=F\eta' (f^{-1}f) \psi_1=F\eta'' \tau_1$$
and hence $e_1=F\tau_1, F\eta'' \tau_1$.
Moreover, 
$$\psi_2=\eta_1...\eta_n\psi_1=\eta_1...\eta_n(f^{-1} f)\psi_1= f^{-1}\eta_1''...\eta_n''\tau_1$$
and hence 
$$f\psi_2=\eta_1''...\eta_n''\tau_1$$
We define $\tau_2=f\psi_2$.
It follows that $\tau_2=\eta_1''...\eta_n''\tau_1$.
So in particular $\tau_2$ is expressible as a $Y$-word, which we fix and refer to as $\tau_2$ for the remainder of the proof.

Now $F\eta''\tau_1=F\nu \tau_1$ and so $F\eta''= F\nu$.
Since $\nu,\eta''$ are special forms that lie in the same $F$-coset, $\nu$ can be obtained from $\eta''$ by a sequence of expansion and contraction moves and hence has the same support.
In particular, it follows that the pairs $\eta_i'', \nu$ are independent for each $1\leq i\leq n$.
Therefore, the descriptions 
$$F\tau_1,F\nu\tau_1\qquad F\tau_2,F\nu\tau_2$$
satisfy the condition of Definition \ref{parequiv} with the special forms
$\eta_1'',...,\eta_n''$ witnessing the conditions.
\end{proof}

\begin{lem}\label{equiv}
The parallel $1$-cluster relation is an equivalence relation.
The relation is precisely the transitive closure of the relation obtained by declaring $1$-clusters $e_1,e_2$ as parallel
if they are parallel, facial $1$-subclusters of a cluster. 
\end{lem}

\begin{proof}
In remark \ref{parequivrem} we discussed why Definition \ref{parequiv} is symmetric.
Therefore, it suffices to show that this is transitive, i.e.
given $1$-cells $e_1,e_2,e_3$ such that $e_1,e_2$ and $e_2,e_3$ are parallel,
then $e_1,e_3$ are parallel.
Fix a description $F\tau_1, F\nu \tau_1$ of $e_1$.
By two applications of Lemma \ref{indparpar},
first on $e_1,e_2$ and then on $e_2,e_3$
we find descriptions
$$e_1=F\tau_1, F\nu \tau_1\qquad e_2=F\tau_2, F\nu \tau_2\qquad e_3=F\tau_3, F\nu \tau_3$$
that satisfy the following conditions of Definition \ref{parequiv}:
\begin{enumerate}
\item There are special forms $\eta_1,...,\eta_n$ such that each pair $\nu,\eta_i$ is independent and $\eta_1...\eta_n=\tau_1\tau_2^{-1}$.

\item There are special forms $\psi_1,...,\psi_m$ such that each pair $\nu,\psi_i$ is independent and $\psi_1...\psi_m=\tau_2\tau_3^{-1}$.
\end{enumerate}

It follows that $e_1,e_3$ are parallel since each pair $\nu,\eta_i$ and $\nu,\psi_i$ is independent and 
$$\eta_1...\eta_n\psi_1...\psi_m=\tau_1\tau_2^{-1}\tau_2\tau_3^{-1}=\tau_1\tau_3^{-1}$$
This finishes the proof that our relation is an equivalence relation.

We leave the claim concerning the transitive closure as a pleasant exercise for the reader.
This requires translating the definition into a sequence of clusters witnessing the transitive closure. 
This statement will not be used in the rest of the article, and is stated to provide motivation for the concept.
\end{proof}

\begin{lem}\label{clusterequiv}
The set of facial $1$-subclusters of an $n$-cluster comprise of exactly $n$ parallel equivalence classes.
A set of representatives for these equivalence classes is given by the set of facial $1$-subclusters incident to any given $0$-cell in the cluster.
\end{lem}

\begin{proof}
Consider an $n$-cluster $C$ described with base at $F\tau$ and parameters $\nu_1,...,\nu_n$.
By $G$-invariance of Definition \ref{parequiv}, we can assume that $\tau$ is the empty word.
First note that each facial $1$-subcluster in $C$ is parallel to a facial $1$-subcluster of the form $F, F\nu_i$ for some $1\leq i\leq n$.
We will show that a pair $$e_1=F,F\nu_i\qquad e_2=F, F\nu_j$$ is not parallel if $i\neq j$.
Assume by way of contradiction that $e_1,e_2$ are parallel.
By Lemma \ref{indparpar} there is a description $e_2=F\psi, F\nu_i\psi$, where $\psi$ is a $Y$-word, such that the descriptions 
$$F,F\nu_i\qquad F\psi, F\nu_i\psi$$ satisfy Definition \ref{parequiv}.
So there are special forms $\eta_1,...,\eta_m$ such that $\eta_k, \nu_i$ are independent for each $1\leq k\leq m$,
and $\eta_1...\eta_m=\psi$.

Since we are considering two different descriptions of $e_2$: 
$$e_2=F,F\nu_j\qquad e_2=F\psi, F\nu_i\psi$$
we have to consider two possibilities:

\begin{enumerate}
\item $F=F\psi$ and $F\nu_j=F\nu_i\psi$.
\item $F=F\nu_i\psi$ and $F\nu_j=F\psi$.
\end{enumerate}

Before we analyse each case separately,
we observe the following.
Since $\eta_1...\eta_m=\psi$, and for each $1\leq j\leq m$, $\eta_j,\nu_i$ are independent,
it follows that: $$F\nu_i\psi=F\nu_i(\eta_1....\eta_n)=F(\eta_1...\eta_n)\nu_i=F\psi \nu_i$$

Case $1$:
Since $F=F\psi$, we conclude that $\psi\in F$ (even though $\psi$ is formally a $Y$-word, it is an element of $F$).
It follows that $$F\nu_i\psi=F\psi\nu_i=F\nu_i$$
But this means that $F\nu_j=F\nu_i$.
This leads to a contradiction since $\nu_i,\nu_j$ are independent special forms and so the element $\nu_j\nu_i^{-1}\notin F$.

Case $2$:
In this case, observe that:
$$F=F\nu_i\psi=F\psi\nu_i=F\nu_j\nu_i$$
This means that $F=F\nu_j\nu_i$, which is a contradiction since $\nu_i,\nu_j$ are independent special forms, so the element $\nu_j\nu_i\notin F$.

This means that our assumption that $e_1,e_2$ are parallel must be false,
and hence we conclude that the set of facial $1$-subclusters incident to the trivial coset $F$ in $C$ are pairwise non-parallel.
\end{proof}

We conclude with the following strengthening of the main idea in the previous proof.

\begin{lem}\label{samenodeparallel}
Let $e_1,e_2$ be distinct $1$-clusters incident to the same $0$-cell $u$.
Then $e_1,e_2$ cannot be parallel.
\end{lem}

\begin{proof}
Considering the invariance of the notion under the $G$-action, we may assume for simplicity that $u$ is the trivial coset.
Let $$e_1=F, F\nu \qquad e_2=F\tau, F\nu \tau$$
with a common parameter $\nu$ so that there is a sequence of special forms $\nu_1,...,\nu_n$ satisfying:
\begin{enumerate}
\item $\nu_i, \nu$ are independent for each $1\leq i\leq n$.
\item $\nu_1...\nu_n=\tau$. 
\end{enumerate}

Note that the conditions above imply that $\tau,\nu$ have disjoint supports.
There are two cases:
\begin{enumerate}
\item $F=F\tau$.
\item $F=F\nu\tau$.
\end{enumerate}

Case $1$:
Since $F=F\tau$, $\tau\in F$.
Hence $F\nu\tau=F\tau\nu=F\nu$.
This means that $e_1=e_2$.

Case $2$: This implies that $\nu\tau\in F$.
But since $\nu,\tau$ have disjoint supports, this is impossible since $\nu \notin F$.
\end{proof}

While working with finite sets of (parallel) $1$-clusters,
it will often be useful to consider descriptions that share the same parameter.

\begin{defn}
(Common description) Consider the descriptions:
$$e_1=F\tau_1,F\nu\tau_1\qquad e_2=F\tau_2,F\nu\tau_2\qquad ...\qquad e_n=F\tau_n,F\nu\tau_n$$
This shall be denoted as a \emph{common description} for the $1$-cells $e_1,...,e_n$, and $\nu$ is denoted as a \emph{common parameter}.
\end{defn}

\begin{remark}
If a set of $1$-clusters $e_1,...,e_n$ are pairwise parallel, then by repeated application of Lemma \ref{indparpar}, as in the proof of Lemma \ref{equiv},
we can find a common description for these $1$-clusters satisfying Definition \ref{parequiv} for each pair.
\end{remark}

\subsection{Orthogonal $1$-clusters}

As discussed before, our proof of asphericity involves finding a subcomplex which is homeomorphic to a nonpositively curved cube complex.
The idea behind proving nonpositive curvature is Gromov's link condition, and we shall establish the link condition 
by means of the following notion of \emph{orthogonality} of $1$-clusters.

\begin{defn}\label{orth}
(Orthogonal $1$-clusters) Let $e_1,e_2$ be $1$-clusters that are both incident to a $0$-cell $u$.
They are said to be \emph{orthogonal} if they admit descriptions:
$$e_1=F\tau,F\nu_1\tau\qquad e_2=F\tau,F\nu_2\tau$$
such that $F\tau=u$ and $\nu_1,\nu_2$ are independent special forms.
This is equivalent to requiring that there is a $2$-cluster $C$ which contains $e_1,e_2$ as facial $1$-subclusters incident to $u$.

A list $e_1,...,e_n$ of $1$-clusters that are all incident to a $0$-cell $u$ are said to be \emph{orthogonal} if they are pairwise orthogonal.
\end{defn}

\begin{remark}
The notion of orthogonality is clearly $G$-invariant, i.e. $e_1,e_2$ are orthogonal if and only if $e_1\cdot g,e_2\cdot g$
are orthogonal for any $g\in G$.
We leave it as an elementary exercise for the reader to check that the notion does not depend on the choice of descriptions, as long as the choice of base is the same.
In this case, the parameters shall automatically be independent.
\end{remark}

\begin{lem}\label{orthlem}
Let $e_1,...,e_n$ be $1$-clusters that are all incident to a $0$-cell $u$.
If they are orthogonal, then there is an $n$-cluster that contains $e_1,...,e_n$ as facial $1$-cells.
\end{lem}

\begin{proof}
By $G$-invariance of the notion of orthogonality, we can assume without loss of generality that $u$ is the trivial coset $F$.
Now we find descriptions for our $1$-clusters with base $F$:
$$e_1=F,F\tau_1\qquad ....\qquad e_n=F,F\tau_n$$
for special forms $\tau_1,...,\tau_n$.
Since $e_1,...,e_n$ are pairwise orthogonal, it follows that $\tau_1,...,\tau_n$ are pairwise independent.
So the cluster $C$ can be described with base $F$ and parameters $\tau_1,...,\tau_n$.
\end{proof}

The above Lemma provides a bijective correspondence 
between the set of $n$-clusters incident to a given $0$-cell $u$,
and the set of $n$-tuples of pairwise orthogonal $1$-clusters incident to $u$.

$$\{\text{ $n$-clusters incident to $u$}\}\leftrightarrow \{\text{$n$-tuples of orthogonal $1$-clusters incident to $u$}\}$$

This discussion motivates the following definition.

\begin{defn}
A \emph{pre-cluster} $C$ consists of $1$-clusters $e_1,...,e_n$ that are all incident to a $0$-cell $u$, and are orthogonal.
The $0$-cell $u$ is said to be the \emph{base} of the pre-cluster $C$. 

Pre-clusters admit descriptions just like clusters.
For instance, a pre-cluster $C$ described with base $F\tau$, and parameters $\tau_1,...,\tau_n$ consists of the $1$-clusters:
$$F\tau,F\tau_1\tau\qquad ... \qquad F\tau,F\tau_n\tau$$
with base the $0$-cell given by $F\tau$.
The list of parameters $\tau_1,...,\tau_n$ is always assumed to be a sorted list for convenience.

The \emph{closure} $\bar{C}$ of the $n$-pre-cluster $C$ is the cluster described with base $F\tau$
and parameters $\tau_1,...,\tau_n$.
Clearly,  the $1$-clusters in $C$ are all facial $1$-subclusters of $\bar{C}$,
and provide a set of representatives of the parallel equivalence classes of the facial $1$-subclusters of $\bar{C}$.
\end{defn}
Note that in the above definition we assume that a $1$-cluster is also a $1$-pre-cluster, whose closure is itself.

\subsection{Amplification of pre-clusters}

An amplifying move is an operation that takes as an input a pre-cluster $C_1$ and produces as an output a ``larger" pre-cluster
$C_2$ such that $\bar{C}_1$ is a subcluster of $\bar{C}_2$.

The idea behind the amplifying move is elementary, yet technical difficulties
in implementing the move on a given pre-cluster arise from the fact that we need to choose a description.
We will show that in a certain precise sense, the concept is independent of the choice of description.
We first provide an elementary example to illustrate the motivation behind this concept.

\begin{example}
Consider the $1$-cluster $F, Fy_{10}$.
We can apply an expansion move on the special form $y_{10}$
to obtain the special form $y_{100}y_{1010}^{-1}y_{1011}$.
Consider the pre-cluster $C_1$ consisting of the $1$-clusters 
$$F, Fy_{100}\qquad F,Fy_{1010}^{-1} \qquad F,Fy_{1011}$$
The closure is a $3$-cluster $\bar{C}_1$ described with base $F$ and parameters $y_{100},y_{1010}^{-1},y_{1011}$.
This cluster contains the $1$-cluster $F, Fy_{10}$ as a cross diagonal.

Now consider the expansion move $y_{100}\to y_{1000}y_{10010}^{-1}y_{10011}$.
Consider the $5$-pre-cluster $C_2$ comprising of
$$\{F,F\tau\mid \tau\in \{y_{1000},y_{10010}^{-1},y_{10011},y_{1010}^{-1},y_{1011}\}\}$$
Then $\bar{C}_2$ is described with base $F$ and parameters $$y_{1000},y_{10010}^{-1},y_{10011},y_{1010}^{-1},y_{1011}$$
and contains $\bar{C}_1$ as a diagonal subcluster, and also the $1$-cluster $F, Fy_{10}$ as a cross diagonal.
Continuing in this fashion, we can produce arbitrarily large (pre)-clusters, each new cluster (in the closure) containing the previous ones as subclusters.
\end{example}

\begin{defn}\label{amp1cl}
(Amplifying a $1$-cluster) 
Consider a $1$-cluster $C_1=F\tau,F\nu\tau$ with base $u=F\tau$.
Each sorted list of special forms $\nu_1,...,\nu_n$ satisfying $F\nu=F\nu_1...\nu_n$ determines an \emph{amplification of} $C_1$ \emph{at} $u$, which produces a pre-cluster $C_2$ consisting of:
$$F\tau,F\nu_1\tau\qquad F\tau,F\nu_2\tau\qquad...\qquad F\tau,F\nu_n\tau$$

Such an amplification is informally denoted as $C_1\to C_2$.
The $1$-clusters of $C_2$ shall be referred to as the \emph{offsprings} of the $1$-cluster $C_1$. 
\end{defn}

\begin{remark}

Note that the definition is $G$-invariant in the following sense.
Let $C_1,C_2$ be as above.
For any $g\in G$, the pre-cluster
$C_2\cdot g$ consisting of 
$$F(\tau g),F\nu_1(\tau g)\qquad F(\tau g),F\nu_2(\tau g)\qquad...\qquad F(\tau g),F\nu_n(\tau g)$$
is produced by an amplification of $C_1\cdot g= F(\tau g), F\nu (\tau g)$ given by $F\nu=F\nu_1...\nu_n$.
\end{remark}

Now we show that the same amplifying move can be obtained from any description of a given $1$-cluster, with the same base coset.
In this sense, the notion is independent of the choice of description.

\begin{lem}\label{inddesc1}
Let $C_1$ be a $1$-cluster $F\tau, F\nu\tau$.
Let $C_2$ be the amplification consisting of
$$e_1=F\tau,F\nu_1\tau\qquad ... \qquad e_n=F\tau,F\nu_n\tau$$
where $\nu_1,...,\nu_n$ is a sorted list of special forms such that $F\nu= F\nu_1...\nu_n$.

Consider a different description $C_1=F\psi,F\eta \psi$,
so that $F\psi=F\tau$ and $\eta$ is a special form.
Then there is a sorted list of special forms $\eta_1,...,\eta_n$ such that:
\begin{enumerate}
\item $F\eta=F\eta_1...\eta_n$.
\item $e_i=F\psi, F\eta_i\psi$ for each $1\leq i\leq n$.
\end{enumerate}
\end{lem}

\begin{proof}
Since $F\psi=F\tau$, there is an $f\in F$ such that $\psi=f\tau$.
Let $\nu_i'$ be a special form obtained by applying expansion moves on $\nu_i$
such that for each percolating element $y_s^t$ of $\nu_i'$, $f^{-1}$ acts on $s$.
Let $\nu_i''$ be the special form obtained by replacing each percolating element $y_s^t$ of $\nu_i'$ with $y_{s\cdot f^{-1}}^t$.

It follows that $$F\nu_1...\nu_n \tau=F\nu_1'...\nu_n' \tau=F \nu_1'...\nu_n'f^{-1} (f\tau)$$
$$=F\nu_1''...\nu_n'' (f\tau)=F\nu_1''...\nu_n'' \psi$$
In particular $F\nu_1''...\nu_n''= F\eta$.
By Lemma \ref{FactionspecialL} we know that $\nu_1'',...,\nu_n''$ is a sorted list of special forms.
Since 
$$e_i=F\tau,F\nu_i\tau= F\psi,F\nu_i''\psi$$
we conclude the statement of the Lemma with $\eta_i=\nu_i''$.
\end{proof}

We obtain the following corollary.

\begin{cor}\label{inddesc}
Given a $1$-cluster, and a base $0$-cell incident to it, 
the set of pre-clusters obtained by amplifying moves on a given description (with the given $0$-cell as base) 
is the same as the set of pre-clusters obtained by amplifying moves on any other description
(with the given $0$-cell as base). 
\end{cor}

To conclude our discussion, we show that the set of clusters obtained from taking closures of amplifications of a $1$-cluster are the same regardless of the choice of base $0$-cell.

\begin{lem}\label{ampindbase}
Let $C_1$ be a $1$-cluster with incident $0$-cells $u,v$.
If $C_2$ is a pre-cluster obtained by performing an amplification of $C_1$ at $u$,
then there is a pre-cluster $C_3$ obtained by performing an amplification of $C_1$ at $v$ such that $\bar{C_3}=\bar{C_2}$.
\end{lem}

\begin{proof}
Let $C_1$ be a $1$-cluster $F\tau, F\nu\tau$.
Let $C_2$ be the amplification consisting of
$$F\tau,F\nu_1\tau\qquad ... \qquad F\tau,F\nu_n\tau$$
where $\nu_1,...,\nu_n$ is a sorted list of special forms such that $F\nu= F\nu_1...\nu_n$.
The closure $\bar{C_2}$ is the cluster described with base $F\tau$ and parameters $\nu_1,...,\nu_n$.

Now consider the description $C_1=F\nu^{-1}(\nu \tau),F(\nu\tau)$ with base coset $F(\nu\tau)$ and parameter $\nu^{-1}$.
Since $\nu$ is equivalent to $\nu_1\nu_2...\nu_n$, it follows from applying Lemma \ref{elementaryinversion} that $\nu^{-1}$ is equivalent to $\nu_1^{-1}\nu^{-1}_2...\nu_n^{-1}$.

Now consider the amplification $C_3$ given by 
$$F(\nu\tau),F\nu_1^{-1}(\nu\tau)\qquad ... \qquad F(\nu\tau),F\nu_n^{-1}(\nu\tau)$$
The closure $\bar{C_3}$ is the cluster described with base $F(\nu\tau)$ and parameters $\nu_1^{-1},...,\nu_n^{-1}$.

Clearly, $\bar{C_2}=\bar{C_3}$ since the above descriptions are different descriptions for the same cluster.
\end{proof}


Now we define the amplifying move for an $n$-pre-cluster for $n>1$.
An amplifying move, or an amplification of a pre-cluster produces another pre-cluster.

\begin{defn}\label{ampclus}
(Amplifying an $n$-pre-cluster) 
Consider a pre-cluster $C_1$ with base $u$.
Let $E\subseteq C_1$ be a set of $1$-clusters $E=\{e_1,...,e_k\}$.
An \emph{amplification} $C_1\to C_2$ \emph{along} $E$
is given by replacing $e_1,...,e_k$ in $C_1$ by amplifications $D_1,...,D_k$ of $e_1,...,e_k$ respectively, at $u$.

More concretely $$C_2= (C_1\setminus \{e_1,...,e_k\})\bigcup (D_1\cup...\cup D_k)$$
We say that $C_2$ \emph{is obtained from amplifying} $C_1$ \emph{along} $e_1,...,e_k$. 
The output is also a pre-cluster with the same base.
Note that the choice of base in such an amplification is unique,
since there is precisely one $0$-cell incident to all the $1$-clusters in $C_1$ since $n>1$.
\end{defn}

Let us consider a concrete example.
\begin{example}
Let $C_1$ be a pre-cluster described as
$$e_1=F\tau,F\tau_1\tau\qquad ... \qquad e_n=F\tau,F\tau_n\tau$$

Fix $j,k\in \{1,...,n\}, j\neq k$.
Let  $\nu_1,...,\nu_{l}$ and $\psi_1,...,\psi_{m}$ be sorted lists such that 
$$F\tau_{j}=F\nu_1...\nu_{l}\qquad F\tau_{k}=F\psi_1...\psi_{m}$$
Given this data, we now describe a $(n+l+m-2)$-pre-cluster $C_2$ which is an amplification of $C_1$ along $\{e_{j},e_{k}\}$.
This pre-cluster $C_2$ is the following union:
$$(\{e_1,...,e_n\}\setminus \{e_{j},e_{k}\} )\bigcup \{F\tau, F\nu_i\tau\mid 1\leq i\leq l\}\bigcup \{F\tau, F\psi_i\tau\mid 1\leq i\leq m\}$$
\end{example}

Note that Definition \ref{ampclus} is $G$-invariant in the same sense as Definition \ref{amp1cl}.
Also, this is independent of the choice of description just as in \ref{amp1cl}.
The above discussion combined with an analogue of Lemma \ref{inddesc1} provides the following.

\begin{lem}\label{ID1}
Let $C$ be a pre-cluster endowed with two different descriptions.
Then the set of pre-clusters obtained from performing amplifying moves using one description is the same as the set of clusters
obtained from performing amplifying moves using the other description.
\end{lem}

\begin{proof}
This proof is almost verbatim the same as the proof of Lemma \ref{inddesc1}.
\end{proof}

\begin{remark}
Let $C_2$ be an amplification of a pre-cluster $C_1$, and let $C_3$ be an amplification of $C_2$.
Then $C_3$ can be obtained from a single amplification of $C_1$.
\end{remark}

We now extend the notion of amplification of a $1$-cluster to a set of pairwise parallel $1$-clusters.

\begin{defn}\label{parallelexpansion}
(Common amplification for parallel $1$-clusters)
Let $e_1,e_2$ be parallel $1$-clusters.
Let $$e_1=F\tau_1,F\nu\tau_1\qquad e_2=F\tau_2,F\nu\tau_2$$
be two descriptions satisfying the conditions of Definition \ref{parequiv}.
Let $\nu_1,...,\nu_n$ be a sorted list of special forms such that $F\nu=F\nu_1...\nu_n$.

Now consider the amplification of $e_1,e_2$ to obtain pre-clusters $C_1, C_2$ respectively, as:
\begin{enumerate}
\item $C_1$ consists of $$F\tau_1,F\nu_1\tau_1\qquad F\tau_1,F\nu_2\tau_1 \qquad...\qquad F\tau_1,F\nu_n\tau_1$$
\item $C_2$ consists of $$F\tau_2,F\nu_1\tau_2\qquad F\tau_2,F\nu_2\tau_2\qquad ...\qquad F\tau_2,F\nu_n\tau_2$$
\end{enumerate}
Such an amplification of parallel $1$-clusters is said to be a \emph{common amplification}.

A second variant of this definition is considered for the description $e_2=F\nu\tau, F\nu^{-1} (\nu \tau)$.
First, note that since $F\nu=F\nu_1...\nu_n$, it follows from applying Lemma \ref{elementaryinversion} that $F\nu^{-1}=F\nu_1^{-1}...\nu_n^{-1}$.
Now consider the amplification $C_1$ of $e_1$ with base $F\tau_1$ as before.
The corresponding \emph{common amplification} of $e_2$ with base $F(\nu\tau_2)$ is $C_2'$ which consists of:
$$F(\nu \tau_2),F\nu_1^{-1}(\nu \tau_2)\qquad F(\nu \tau_2),F\nu_2^{-1}(\nu \tau_2)\qquad ...\qquad F(\nu\tau_2),F\nu_n^{-1}(\nu \tau_2)$$
Note that $\bar{C_2}=\bar{C_2'}$.
\end{defn}

\begin{remark}
Note that in the above definition, for each $1\leq i\leq n$, the $1$-clusters
$$F\tau_1,F\nu_i\tau_1\qquad F\tau_2,F\nu_i\tau_2$$
are parallel.
In the second variant of the definition the $1$-clusters
$$F\tau_1,F\nu_i\tau_1\qquad F(\nu \tau_2),F\nu_i^{-1}(\nu \tau_2)$$
are parallel.

The above definition can be extended to any finite number of $1$-clusters that are pairwise parallel.
This is done by first finding common descriptions that satisfy the conditions of Definition \ref{parequiv}, using Lemma \ref{indparpar}. 
And then we perform common amplifications as above.
\end{remark}

Definition \ref{parallelexpansion} has a natural extension to pre-clusters.

\begin{defn}\label{parallelexpansionclusters}
(Common amplification of pre-clusters along parallel subsets)
Let $C_1,C_2$ be pre-clusters based at $u,v$ respectively.
Let $E_1\subseteq C_1,E_2 \subseteq C_2$ be sets of $1$-clusters such that:
\begin{enumerate}
\item $E_1=\{e_1,...,e_k\}$ and $E_2=\{e_1',...,e_k'\}$.
\item $e_i,e_i'$ are parallel for each $1\leq i\leq k$.
\end{enumerate} 

Fix an amplification $C_1\to C_1'$ along $E_1$ by
$$e_1\to D_1\qquad ... \qquad e_k\to D_k$$
at $v$.

We define a \emph{common amplification} $C_2\to C_2'$
using the amplifications
$$e_1'\to D_1'\qquad ... \qquad e_k'\to D_k'$$
at $u$ where $e_i'\to D_i'$ and $e_i\to D_i$ are common amplifications for each $1\leq i\leq k$.

We denote such a common amplification as:
$$C_1\to C_1'\qquad C_2\to C_2'\qquad \text{ along }E_1,E_2$$
\end{defn}

\subsection{Disparate $1$-clusters}

Now we define a notion that is in a certain strong sense the opposite of the notion of parallel.

\begin{defn}
A pair of $1$-clusters $e_1,e_2$ is said to be \emph{disparate} if the following holds:
\begin{enumerate}
\item $e_1,e_2$ are not parallel.
\item Any pair of amplifications $$e_1\to C_1\qquad e_2\to C_2$$ satisfy that for each pair $e_1'\in C_1,e_2'\in C_2$, $e_1',e_2'$ are not parallel.
\end{enumerate}
\end{defn}

As a simple example of disparate $1$-clusters, consider any pair of orthogonal $1$-clusters incident to a common $0$-cell.
We now demonstrate that being disparate is a property of parallel equivalence classes of $1$-clusters.

\begin{lem}\label{remdisp}
Given $1$-clusters $e_1,e_2,e_3$ such that $e_1,e_2$ are parallel, and $e_1,e_3$ are disparate,
it also holds that $e_2,e_3$ are disparate.
\end{lem}
\begin{proof}
To see this, observe that if $e_2,e_3$ were not disparate, then 
we can perform amplifications to obtain pre-clusters $C_2,C_3$ such that there is a pair $e_2'\in C_2,e_3'\in C_3$
such that they are parallel.
However, performing the common amplification for $e_1$, as for $e_2$, we obtain a pre-cluster $C_1$ with $e_1'\in C_1$
which is parallel to $e_2'$ and hence $e_3'$.
This means that $e_1,e_3$ were not disparate to begin with.
\end{proof}

\begin{lem}\label{disp2}
Let $e_1,e_2$ be disparate, and let $C$ be an amplification of $e_1$.
Then for each $e_3\in C$, $e_2,e_3$ are disparate.
\end{lem}

\begin{proof}
This follows immediately from the definitions.
\end{proof}

\begin{lem}
Let $C$ be a cluster.
Any pair of facial $1$-cells of $C$ is either parallel or disparate.
\end{lem}

\begin{proof}
Let $C$ be a cluster described with base at $F\tau$, and parameters $\tau_1,...,\tau_n$.
Thanks to Lemma \ref{clusterequiv} and Remark \ref{remdisp} it suffices to show that any pair of $1$-clusters
$$e_1=F\tau,F\tau_i\tau\qquad e_2=F\tau,F\tau_j\tau$$
is disparate if $i\neq j$.
From Lemma \ref{clusterequiv} we know that they are not parallel.
Now if $C_1,C_2$ are pre-clusters obtained from amplifications of $e_1,e_2$ respectively (performed at the base $F\tau$),
then for each $e\in C_1,e'\in C_2$ the pair $e,e'$ is orthogonal.
It follows again from Lemma \ref{clusterequiv} that $e,e'$ are not parallel.
\end{proof}

\subsection{Weakly balanced systems of pre-clusters}

\begin{defn}\label{disparate01}
Consider a pair $e,u$ comprising of a $1$-cluster $e$ and a $0$-cell $u$.
$e,u$ are said to be \emph{disparate} if for any $e'$ incident to $u$,  the $1$-clusters $e,e'$ are disparate.
$e,u$ are said to be \emph{compatible}, if there is a $1$-cluster $e'$ incident to $u$ such that $e,e'$ are parallel.

Note that this definition is $G$-invariant.
So if $e,u$ is compatible or disparate, then for each $g\in G$, $e\cdot g, u\cdot g$ also satisfies the same.
This follows from the fact that the definition of parallel and disparate $1$-clusters is $G$-invariant.
\end{defn}

The following elementary observation follows immediately from the definitions and Lemma \ref{disp2}.

\begin{lem}\label{disp3}
Let $e$ be a $1$-cluster and $u$ be a $0$-cell.
Then the following holds:
\begin{enumerate}
\item Let $e,u$ be disparate (or compatible) and let $e,e'$ be parallel $1$-clusters. Then $e',u$ are disparate (respectively, compatible).
\item Let $e,u$ be disparate (or compatible) and let $C$ be a pre-cluster obtained from performing an amplification of $e$.
Then for each $e'\in C$, $e',u$ are disparate (respectively, compatible).
\end{enumerate}
\end{lem}

\begin{defn}\label{weakbalance}
Let $C_1,...,C_n$ be pre-clusters with bases $u_1,...,u_n$ respectively. 
We say that this is a \emph{weakly balanced system} if for any
$$e\in \bigcup_{1\leq i\leq n}C_i\qquad u_j\in \{u_1,...,u_n\}$$ 
the pair $e,u_j$ is either disparate or compatible.
\end{defn}

\begin{lem}\label{prebalancedexp}
Let $C_1,...,C_n$ be a weakly balanced system based at $u_1,...,u_n$.
Let $$C_1,...,C_n\to C_1',...,C_n'$$
be any amplification.
Then the system $C_1',...,C_n'$ is also weakly balanced.
\end{lem}

\begin{proof}
Consider a pair $e,u_j$ where $1\leq j\leq n$ and $e\in C_i'$ for some $1\leq i\leq n$.
Then $e$ is an offspring of a $1$-cluster $e'\in C_i$. 
Note that $e',u_j$ are compatible or disparate by our hypothesis.
Since both properties are inherited for the offspring $e$, thanks to Lemma \ref{disp3}, we are done.
\end{proof}

Our goal in this subsection is to prove the following Proposition.

\begin{prop}\label{prebalancing}
(Weakly balancing procedure) Let $C_1,...,C_n$ be clusters.
There is an amplification $$C_1,...,C_n\to C_1',...,C_n'$$
such that $C_1',...,C_n'$ is a weakly balanced system.
\end{prop}

The technical core of the proof of this proposition is the following lemma.

\begin{lem}\label{dispfinal}
Let $e$ be a $1$-cluster incident to a $0$-cell $u$ and let $u_1,...,u_n$ be $0$-cells.
Then there is an amplification of $e$ that produces a pre-cluster $C$ such that the following holds. 
For each $e'\in C$ and each $1\leq i\leq n$, the pair $e',u_i$
is disparate or compatible.
\end{lem}


Using this we can finish the proof of Proposition \ref{prebalancing}.

{\bf Proof of Proposition \ref{prebalancing}.}

\begin{proof}
For each $1$-cluster $e\in \bigcup_{1\leq i\leq n}C_i$,
using Lemma \ref{dispfinal} we find an amplification $e\to C_e$ (at the same base)
such that for each $u_i$ and each $e'\in C_e$, the pair $e',u_i$ is compatible or disparate.
(Note that this could possibly be the trivial amplification, i.e. it could be that $C_e=e$.)
Our amplification $C_i\to C_i'$ is then the amplification obtained by replacing each $e\in C_i$ with $C_e$,
i.e. $C_i'=\bigcup_{e\in C_i}C_e$.
\end{proof}

Before we provide proofs of the above, we study the notions of disparate or compatible pairs $e,u$ in depth. 
Given suitable descriptions of $e$, we would like to detect whether $e,u$ are disparate, or whether there is a $1$-cluster $e'$
incident to $u$ such that $e,e'$ are parallel.
To describe such situations we require certain restrictions on descriptions of $e$.
Recall the content of Lemma \ref{taileq}, since it shall be used in what follows.

\begin{defn}
Let $F\tau, F\nu\tau$ be the description of a $1$-cluster.
We say that such a description is \emph{tame}, if $\tau$ is a $Y$-word in standard form with no potential cancellations and $\nu\tau$ is in standard form.

\end{defn}

\begin{remark}
It is straightforward to produce tame descriptions for any given $1$-cluster.
For instance, choose a normal form representative for the base coset, and choose the parameter to have sufficiently large depth (by performing expansion moves).
Consider such a tame description $F\tau, F\nu\tau$. 
We know that $\tau$ is a standard form with no potential cancellations, however $\nu\tau$ is a standard form that may admit potential cancellations.
Any such potential cancellation must occur between a pair of occurrences of $y^{\pm}$, one of which lies in $\nu$ and the other in $\tau$.
\end{remark}

Now we establish criteria to detect whether a pair $e,u$ is compatible or disparate.
Note that considering the group action it suffices to consider the case when $u$ is the trivial coset.
The first one establishes when such a pair is compatible.

\begin{lem}\label{VEpar}
Let $u$ be the trivial coset $F$. 
Suppose that a $1$-cluster $e$ admits a tame description $F\tau, F\nu \tau$ that satisfies either one of the following:
\begin{enumerate}
\item $\nu,\tau$ have disjoint supports.
\item The restriction of $\nu \tau$ on $Supp(\nu)$ equals the restriction of an element of $F$ on $Supp(\nu)$.
\end{enumerate}
Then there is a $1$-cluster $e'$ incident to $u$ such that $e,e'$ are parallel.
\end{lem}

\begin{proof}
Assume that $(1)$ holds.
Let $$\tau^{-1}=y_{s_1}^{t_1}...y_{s_n}^{t_n}$$ where equality denotes equality as words.
We know that $$Supp(\tau^{-1})\cap Supp(\nu)=\emptyset$$
Since the given description of $e$ is tame, $\tau$ is a standard form with no potential cancellations.
So $$Supp(y_{s_i}^{t_i})\subset Supp(\tau)$$
for each $1\leq i\leq n$.
In particular, $$Supp(y_{s_i}^{t_i})\cap Supp(\nu)=\emptyset$$
for each $1\leq i\leq n$. 

It follows that the $1$-clusters $$F,F\nu\qquad F\tau,F\nu\tau$$
are parallel with the sequence of special forms $y_{s_1}^{t_1},...,y_{s_n}^{t_n}$ witnessing the conditions of Definition \ref{parequiv}. 

Now we consider the case when $(2)$ holds.
We shall produce another tame description of $e$ which will land us in case $(1)$.
We first convert $\nu\tau$ into a normal form $f\psi$ where $f\in F$ and $\psi$ is a $Y$-word.
Recall from Lemma \ref{taileq} that $f\psi$ is densely mixing on $Supp(\psi)\cdot f^{-1}$
and preserves tail equivalence on the complement of this set.

Since $\nu\tau=f\psi$ and since $\nu\tau$ preserves tail equivalence on $Supp(\nu)$,
it follows that $$Supp(\nu)\cap Supp(\psi)\cdot f^{-1}=\emptyset$$
This means that $$Supp(\nu)\cdot f \cap Supp(\psi)=\emptyset$$
Now consider the description of $e$
$$F(\nu\tau), F\nu^{-1}(\nu\tau)$$

Now let $\eta_1$ be a special form obtained by performing expansion moves on $\nu^{-1}$
such that for each percolating element $y_s^t$ of $\eta_1$, $f$ acts on $s$.
Let $\eta_2$ be the special form obtained by replacing each $y_s^t$ in $\eta_1$ by $y_{s\cdot f}^t$. 
Moreover, by ensuring that $\eta_1$ has sufficient depth we may guarantee that $\eta_2$ has sufficiently large depth so that $\eta_2\psi$ is a standard form.  

It follows that
$$F\nu^{-1}(f\psi)=F\eta_1(f\psi)=F\eta_2 \psi$$

So we get a new tame description of our $1$-cluster $e$
$$F\psi, F\eta_2\psi$$

Since $$Supp(\eta_2)=Supp(\eta_1)\cdot f=Supp(\nu^{-1})\cdot f=Supp(\nu)\cdot f$$
and $$Supp(\nu)\cdot f \cap Supp(\psi)=\emptyset$$ it follows that $$Supp(\eta_2)\cap Supp(\psi)=\emptyset$$
Using this new description we land in Case $(1)$ above and therefore 
conclude that there is a $1$-cluster $e'$ incident to $u$ such that $e,e'$ are parallel.

\end{proof}

The next criterion establishes when a pair $e,u$ is disparate.

\begin{lem}\label{VEdis}
Let $u$ be the trivial coset $F$. 
Suppose that a $1$-cluster $e$ admits a tame description $F\tau, F\nu \tau$ satisfying that:
\begin{enumerate}
\item $Supp(\nu)\subseteq Supp(\tau)$.
\item $\nu\tau$ restricted to $Supp(\nu)$ is densely mixing on $Supp(\nu)$.
\end{enumerate}
Then $e,u$ are disparate.
\end{lem}

\begin{proof}

Thanks to Lemma \ref{ampindbase} it suffices to consider amplifications of $e$ with a preferred choice of base, which will be $F\tau$.
Now assume by way of contradiction that there is an amplification $C$ of $e$ with base $F\tau$ given by
$$F\tau,F\nu_1\tau\qquad ... \qquad F\tau,F\nu_n\tau$$
(with $F\nu=F\nu_1...\nu_n$) so that for some $1\leq l\leq n$, the pair $$e'=F\tau,F\nu_l\tau\qquad u=F$$ is compatible.
We can also assume that $\nu_l\tau$ is a standard form (by replacing $\nu_l$ with an equivalent special form, if necessary).

In other words, there is a $1$-cell $e''$ incident to $F$ such that $e',e''$ are parallel.
By Lemma \ref{indparpar}, there is a description $$e''=F\eta,F\nu_l \eta$$ for a $Y$-word $\eta$, and a sequence of special forms $\psi_1,...,\psi_m$ such that:
\begin{enumerate}
\item $\psi_i,\nu_l$ have disjoint supports for each $1\leq i\leq m$.
\item $(\psi_1...\psi_m)\tau=\eta$.
\end{enumerate}

Now since the trivial coset $F$ is incident to $e''$, we have two possible cases:

 {\bf Case (1)} $F\eta=F$.
 
 {\bf Case (2)} $F\nu_l\eta=F$.

{\bf Proof for Case (1)}: 
It follows that $$F(\psi_1...\psi_m)\tau=F\eta=F\qquad (\psi_1...\psi_m)\tau\in F$$
Since $\tau$ is a $Y$-standard form with no potential cancellations, by Lemma \ref{taileq} it is densely mixing does on its support.
Since $Supp(\nu)\subseteq Supp(\tau)$, this implies that the action of $\tau$ on $Supp(\nu)$ is densely mixing.
In particular, the action of $\tau$ on $Supp(\nu_l)\subset Supp(\nu)$ is densely mixing.

Since $\psi_i,\nu_l$ have disjoint supports, the action of $(\psi_1...\psi_m)\tau$ restricted to $Supp(\nu_l)$ equals that of
$\tau$, and hence is densely mixing on $Supp(\nu_l)$.
So this cannot act as an element of $F$, and we get a contradiction.

{\bf Proof for Case (2)}: 
It follows that $$\nu_l\eta=\nu_l(\psi_1...\psi_m)\tau=(\psi_1...\psi_m)\nu_l\tau$$
This means that $$F(\psi_1...\psi_m)(\nu_l \tau)=F\nu_l\eta=F$$ and so $(\psi_1...\psi_m)(\nu_l \tau)\in F$.
Since $\psi_i,\nu_l$ have disjoint supports, the action of $(\psi_1...\psi_m)(\nu_l \tau)$ restricted to $Supp(\nu_l)$ equals the action of $\nu_l \tau$ on $Supp(\nu_l)$.
If we show that the action of $\nu_l \tau$ does not preserve tail equivalence on some sequence in $Supp(\nu_l)$ we shall obtain the desired contradiction.

Recall from the above that $\nu\tau$ is densely mixing on $Supp(\nu)$ and hence also on $Supp(\nu_l)$.
Since $(\nu_1...\nu_n)$ and $\nu$ are equivalent special forms, this means that $(\nu_1...\nu_n)\tau$ is densely mixing on $Supp(\nu_l)$.

Now note that the actions of $(\nu_1...\nu_n)\tau$ and $\nu_l\tau$ agree on $Supp(\nu_l)$ since $\nu_1,...,\nu_n$ are independent and have disjoint supports.
Hence $\nu_l\tau$ is densely mixing on $Supp(\nu_l)$.
\end{proof}

{\bf Proof of Lemma \ref{dispfinal}}

\begin{proof}
We proceed by induction on $n$.
We first consider the base case $n=1$.
Since the notion of parallel $1$-clusters is invariant under the $G$ action, we can assume that $u$ is the trivial coset $F$ for the sake of this proof.
We fix a tame description $e=F\tau,F\nu \tau$.

We perform a sequence of expansion moves on percolating elements of $\nu$ to obtain an equivalent special form $y_{s_1}^{t_1}...y_{s_m}^{t_m}$
such that for each $1\leq i\leq m$, either $Supp(y_{s_i}^{t_i})\subset Supp(\tau)$ or $Supp(y_{s_i}^{t_i})\cap Supp(\tau)=\emptyset$.
We use this data for an amplification of $e$ to obtain the pre-cluster: 
$$e_1=F\tau, Fy_{s_1}^{t_1}\tau\qquad ... \qquad e_m=F\tau, Fy_{s_m}^{t_m}\tau$$

If $Supp(y_{s_i}^{t_i})\cap Supp(\tau)=\emptyset$, then by Lemma \ref{VEpar} $e_i,u$ are compatible.

We claim that for any given $1\leq i\leq m$, if $Supp(y_{s_i}^{t_i})\subset Supp(\tau)$, then exactly one of the following holds:
\begin{enumerate}
\item  $y_{s_i}^{t_i}\tau$ is densely mixing $Supp(y_{s_i}^{t_i})$.
\item $y_{s_i}^{t_i}\tau$ acts like an element of $F$ on $Supp(y_{s_i}^{t_i})$.
\end{enumerate}

By applications of Lemmas \ref{VEpar} and \ref{VEdis}, we conclude that
in case $(1)$ the pair $e_i,u$ is disparate and in case $(2)$ the pair is compatible. 

We show that if $(2)$ does not hold then $(1)$ holds.
Since the description of $e$ is tame, we know that $\tau$ is a standard form with no potential cancellations, and that $\nu\tau$ is a standard form.
Furthermore, since $y_{s_i}^{t_i}$ is an offspring of a percolating element of $\nu$, $y_{s_i}^{t_i}\tau$ is a standard form.

Since $y_{s_i}^{t_i}\tau$ does not act like $F$ on $Supp(y_{s_i}^{t_i})$, it does not preserve tail equivalence on some sequence of the form $s_i\sigma$ for $\sigma\in 2^{\mathbf{N}}$.

{\bf Claim}: The calculation of $y_{s_i}^{t_i}\tau$ on the sequence $s_i\omega$, for any $\omega\in 2^{\mathbf{N}}$, has a nonzero exponent.

{\bf Proof of claim}: Since $y_{s_i}^{t_i}\tau$ is a standard form, the calculation of $y_{s_i}^{t_i}\tau$ on $s_i\sigma$ is of the form

$$(1)\qquad  p_1y^{q_1}p_2y^{q_2}...p_ly^{q_l}\sigma\qquad \text{ for some }p_1,...,p_l\in 2^{< \mathbf{N}}, q_1,...,q_l\in \mathbf{Z}\setminus \{0\}$$

Since this calculation does not preserve tail equivalence, we know that the exponent of this calculation is nonzero.
Moreover, the calculation of $y_{s_i}^{t_i}\tau$ on $s_i\omega$ is of the form 
$$(2)\qquad  p_1y^{q_1}p_2y^{q_2}...p_ly^{q_l}\omega$$

Note that the calculations $(1),(2)$ have a common prefix.
We claim that the exponent of calculation $(2)$ is the same as the exponent of the calculation in $(1)$.
Thanks to an application of Lemma \ref{potcanlemma}, any potential cancellations that exist in $(1)$
must indeed exist in the prefix $$(3)\qquad  p_1y^{q_1}p_2y^{q_2}...p_ly^{q_l}$$
To see this, note that if there were no potential cancellations within the prefix, then by Lemma \ref{potcanlemma},
there would be no new potential cancellations resulting from advancing occurrences of $y^{\pm 1}$ beyond the prefix, by applying moves
in calculation $(1)$.

So we perform all such cancellations within $(3)$.
Once all the cancellations have been performed,
some occurrences of $y^{\pm}$ must remain in the resulting string since the same prefix exists in the calculation obtained by applying the same moves to calculation $(1)$, which we know has nonzero exponent. 
The result is a finite calculation string with no potential cancellations and nonzero exponent.
Thanks to another application of Lemma \ref{potcanlemma}, upon adding the suffix $\omega$, we obtain a calculation without potential cancellations and the same nonzero exponent.
This proves our claim.

Therefore, from an elementary application of Lemma \ref{ulemma}, it follows that $y_{s_i}^{t_i}\tau$ is densely mixing on $Supp(y_{s_i}^{t_i})=[s_i0^{\infty}, s_i1^{\infty}]$.
This establishes the base case of the induction.

Now assume by the inductive hypothesis that we have a pre-cluster $C$ from an amplification $e\to C$ such that
the statement holds for $C$ and $u_1,...,u_{n-1}$.
Now apply the base case to each pair $e\in C, u_n$
to obtain amplifications $e\to C_e$ such that for each $e'\in C_e$ the pair $e',u_n$ is disparate or compatible. 
Note that it is possible that $e\to C_e$ is the trivial amplification and $e=C_e$.

Now consider the amplification $C\to C'$ which is given by replacing each $e\in C$ by $C_e$.
In particular, $C'=\bigcup_{e\in C}C_e$. 
The pre-cluster $C'$ satisfies the desired property, thanks to part $(2)$ of Lemma \ref{disp3}.
Since iterated amplifications of a $1$-cluster can be viewed as a single amplification of the $1$-cluster, we are done.
\end{proof}

\subsection{Balanced systems of clusters}

\begin{defn}\label{BSys}
We say that a family of clusters $C_1,...,C_n$ based at $u_1,...,u_n$ is a \emph{balanced system} if:
\begin{enumerate}
\item For each pair $e\in \bigcup_{1\leq i\leq n} C_i$ and $u_j$, it holds that $e,u_j$ are disparate or compatible.
\item Any pair of $1$-clusters in $\bigcup_{1\leq i\leq n}C_i$ is either parallel or disparate.
\end{enumerate} 
\end{defn}

In other words, a balanced system is a weakly balanced system that also satisfies that any pair of $1$-clusters in the system is either parallel or disparate.
In this subsection we will prove the following.

\begin{prop}\label{mainpropasp}
(Balancing procedure) Let $C_1,...,C_n$ be a weakly balanced system of clusters in $X$.
We can perform an amplification $$C_1,...,C_n\to C_1',...,C_n'$$
to obtain a balanced system $C_1',...,C_n'$.
\end{prop}

Balanced systems are not as well behaved as weakly balanced systems.
Performing ad hoc amplifications of a balanced system will in general not produce a balanced system. 
In particular, the analog of Lemma \ref{prebalancedexp} fails.
However if we perform an amplification of a balanced system $C_1,...,C_n$ carefully enough, this produces another balanced system.

\begin{defn}\label{comamppre}
(Common amplification for a balanced system)
Let $C_1,...,C_n$ be a balanced system, and let $E_j$ be a set of $1$-clusters in $C_j$ for some $1\leq j\leq n$.
Fix an amplification $$C_j\to C_j'\qquad \text{ along }E_j$$

We define an amplification $$C_1,...,C_n\to C_1',...,C_n'$$ as follows.
First we define 
$$E_i=\{e\in C_i\mid \exists e'\in E_j\text{ such that }e,e' \text{ are parallel } \}\qquad \text{ for } i\neq j$$
Then $C_i\to C_i'$ is defined as follows.
For each pair $e\in E_i,e'\in E_j$ of parallel $1$-clusters, the restriction of $C_i\to C_i'$ to $e$ is the common amplification of the restriction of $C_j\to C_j'$ to $e'$.
The restriction to every other $1$-cluster is the trivial amplification.



The resulting amplification is said to be a \emph{common amplification for a balanced system}.
\end{defn}

\begin{lem}\label{balancedsystem}
Let $$C_1,...,C_n\to C_1',...,C_n'$$ be a common amplification of a balanced system $C_1,...,C_n$.
Then $C_1',...,C_n'$ is a balanced system.
\end{lem}

\begin{proof}
First observe that by Lemma \ref{prebalancedexp}, $C_1',...,C_n'$ is weakly balanced and hence condition $(1)$ of Definition \ref{BSys} holds.
We will demonstrate that condition $(2)$ of Definition \ref{BSys} holds.

Consider $1$-clusters $$e_i'\in C_i',e_k'\in C_k'$$ for some $1\leq i,k\leq n$.
We would like to show that they are parallel or disparate.
Let $$e_i\in C_i, e_k\in C_k$$ be $1$-clusters such that $e_i',e_k'$ are offsprings of $e_i,e_k$ in the amplifications $$C_i\to C_i', C_k\to C_k'$$ respectively.
Note that $e_i,e_k$ are parallel or disparate, since $C_1,...,C_n$ is balanced.


If $e_i,e_k$ are disparate, then so are $e_i',e_k'$ thanks to Lemma \ref{disp2}.
If $e_i,e_k$ are parallel, then following Definition \ref{comamppre}, in this case both $e_i,e_k$ are amplified as parallel $1$-clusters in the sense of Definition \ref{parallelexpansion}
(including the possibility of the trivial amplification).
So the union of their sets of offsprings must satisfy that each pair is parallel or disparate.
In particular, $e_i',e_k'$ are either parallel or disparate.
\end{proof}

We now prove the \emph{balancing lemma} which is an important technical step in the proof of Proposition \ref{mainpropasp}.
This is where the Strong Expansion Lemma is used.

\begin{lem}\label{balancelem}
(Balancing Lemma) Let $C_1,C_2$ be pre-clusters with a common base $u$.
Then we can perform amplifications $C_1\to D_1$ and $C_2\to D_2$ such that $D_1,D_2$ is a balanced system.
\end{lem}

\begin{proof}
Note that since both clusters share the same base $u$, the system is weakly balanced.
In this proof we shall use the word \emph{offspring} for percolating elements obtained from expansion moves on special forms,
as well as elements of pre-clusters obtained from amplifying a pre-cluster.
The usage will be clear from the context. 

For simplicity of notation we can assume that  $u$ is the trivial coset, since the proof works in exactly the same way in the general case.
Also, by performing an amplification if necessary, we assume that $C_1$ is a cluster of the form
$$Fy_{s_1}^{t_1}, F\qquad Fy_{s_2}^{t_2},F\qquad ...\qquad Fy_{s_n}^{t_n},F$$
where $t_i\in \{1,-1\}$ and $s_1,...,s_n$ are independent. And $C_2$ is a cluster of the form
$$Fy_{u_1}^{v_1}, F\qquad Fy_{u_2}^{v_2},F\qquad ...\qquad Fy_{u_m}^{v_m},F$$
where $v_i\in \{1,-1\}$ and $u_1,...,u_m$ are independent.

By the Strong Expansion Lemma \ref{strongexpansionlemma}, there are words $y_{p_1}^{q_1}...y_{p_l}^{q_l}$ and $y_{r_1}^{w_1}...y_{r_k}^{w_k}$ such that:
\begin{enumerate}
\item $y_{p_1}^{q_1}...y_{p_l}^{q_l}$ is obtained from $y_{s_1}^{t_1}...y_{s_n}^{t_n}$ by applying expansion moves as in \ref{movesdefinition}.
\item $y_{r_1}^{w_1}...y_{r_k}^{w_k}$ is obtained from $y_{u_1}^{v_1}...y_{u_m}^{v_m}$ by applying expansion moves as in \ref{movesdefinition}.
\item For each pair $y_{p_i}^{q_i}, y_{r_j}^{w_j}$ it holds that either $y_{p_i}^{q_i}= y_{r_j}^{w_j}$ or the pair $y_{p_i}^{q_i}, y_{r_j}^{w_j}$ contains no common offsprings.
\end{enumerate}

Let $D_1$ be given by the amplification of $C_1$ consisting of the $1$-clusters $$F y_{p_1}^{q_1},F\qquad ... \qquad Fy_{p_l}^{q_l}, F$$
and $D_2$ be the amplification of $C_2$ consisting of the $1$-clusters
$$F y_{r_1}^{w_1} ,F\qquad ... \qquad Fy_{r_k}^{w_k},F$$
It remains to check that the $1$-clusters in each pair $$e_1=Fy_{r_i}^{w_i},F\qquad e_2=Fy_{r_j}^{w_j},F$$
are either equal (hence parallel) or disparate.

If $1$-clusters in such a pair are neither parallel nor disparate, we claim that $y_{r_i}^{w_i}, y_{r_j}^{w_j}$ have a common offspring, 
contradicting condition $(3)$ above.
To see this, observe that there are amplifications $$e_1\to E_1\qquad e_2\to E_2$$
using the descriptions above that have parallel offsprings.

Let the two descriptions of the parallel offsprings obtained in the amplifications be $$F\nu_1,F\in E_1 \qquad F\nu_2,F\in E_2$$
Note that we use Lemma \ref{samenodeparallel} to see that these two $1$-clusters are equal.
Since $\nu_1,\nu_2$ are equivalent, we can perform a sequence of expansions to obtain the same special form. 
But this means that $y_{r_i}^{w_i}, y_{r_j}^{w_j}$ have a common offspring.
\end{proof}

{\bf Proof of Proposition \ref{mainpropasp}}
\begin{proof}
We fix a weakly balanced system $C_1,...,C_n$. 
Thanks to Lemma \ref{prebalancedexp}, performing any amplifications on a weakly balanced system produces a weakly balanced system.
We shall omit the usage of the phrase ``weakly balanced" in much of the remainder of the proof.
It shall be evident that all systems under consideration are weakly balanced.

Our goal will now be to perform a sequence of amplifications on this system to obtain a balanced system.
Since iterates of amplifications can be expressed as a single amplification, the proposition will follow.
The proof is an induction performed on the number $n$ of pre-clusters in the system $C_1,...,C_n$.
Note that the base case is trivial, since $1$-clusters in a pre-cluster are pairwise disparate, and hence a system with only a single pre-cluster is balanced.

{\bf Inductive hypothesis}
A system of weakly balanced pre-clusters $C_1,...,C_n$ such that $C_1,...,C_{n-1}$ is balanced.

{\bf Inductive Step}
The inductive step will be performed in $n-1$ stages.
Declare $$C_1^{(0)}=C_1\qquad C_2^{(0)}=C_2\qquad ...\qquad C_{n}^{(0)}=C_n$$
Stage $i$ involves performing an amplification
$$C_1^{(i-1)},...,C_{n}^{(i-1)}\to C_1^{(i)},...,C_{n}^{(i)}$$
{\bf Output at stage $i$}: A system 
$C_1^{(i)},...,C_{n}^{(i)}$ such that:
\begin{enumerate}
\item $C_1^{(i)},...,C_{n-1}^{(i)}$ is a balanced system.
\item  $C_1^{(i)},...,C_i^{(i)}, C_n^{(i)}$
is a balanced system.
\end{enumerate}
{\bf Output at stage $n-1$}: A balanced system
$C_1^{(n-1)},...,C_{n}^{(n-1)}$.

We now describe the amplifications at stage $i$.
We partition the set of $1$-clusters of $C_n^{(i-1)}$ into sets $E_1,E_2$ as follows.
Let $$E_1=\{e\in C_n^{(i-1)}\mid \exists e'\in C_i^{(i-1)} \text{ such that }e'\text{ is neither disparate nor parallel to } e\}\qquad E_2=C_n^{(i-1)}\setminus E_1$$

{\bf Claim 1}: For each $e\in E_1$, the pair $e,u_i$ is compatible.

If this were not the case, since our system is weakly balanced, it would follow that $e$ is disparate with each $1$-cluster of $C_i^{(i-1)}$.
This contradicts the definition of $E_1$.

{\bf Claim 2}: For each $e\in E_1$, $e$ is not parallel to any $1$-cluster in $C_i^{(i-1)}$.

Each pair of $1$-clusters in $C_i^{(i-1)}$ are disparate and if the above was false $e$ would be parallel or disparate with every $1$-cluster in $C_i^{(i-1)}$.
Again, this contradicts the definition of $E_1$.


{\bf Claim 3}: Each $1$-cluster in $E_1$ is disparate with each $1$-cluster in $\bigcup_{1\leq j\leq i-1}C_j^{(i-1)}$.

Assume that there is a pair 
$$e\in E_1\qquad e'\in \bigcup_{1\leq j\leq i-1}C_j^{(i-1)}$$
such that $e,e'$ are not disparate.
Using the inductive hypothesis, it follows that $e,e'$ are parallel.
Since there is an $e''\in C_{i}^{(i-1)}$ such that $e,e''$ are neither parallel nor disparate, it follows that $e',e''$ are neither parallel nor disparate,
contradicting the assumption that the system $C_1^{(i-1)},...,C_{n-1}^{(i-1)}$ is balanced. This proves the claim.

Now let $E_1=\{e_1,...,e_k\}$.
Following Claim $1$, for each $1\leq j\leq k$, let $e_j'$ be the $1$-cluster incident to $u_i$ such that $e_j,e_j'$ are parallel.
We denote the corresponding pre-cluster based at $u_i$ as $$E_1'=\{e_1',...,e_k'\}$$
Using the Balancing Lemma \ref{balancelem}, there is an amplification $$C_i^{(i-1)}\to C_i^{(i)}\qquad E_1'\to E_1''$$
such that $C_i^{(i)},E_1''$ is a balanced system.
Let $D\subseteq C_i^{(i-1)}$ be the set of $1$-clusters that is amplified above.

{\bf Claim 4}: For each pair
$$e\in D\qquad e'\in \bigcup_{1\leq l\leq i-1}C_{l}^{(i-1)}$$
$e,e'$ are disparate.

Since $C_1^{(i-1)},...,C_{n-1}^{(i-1)}$ is balanced, $e,e'$ are either parallel or disparate.
Assume by way of contradiction that $e,e'$ are parallel.
There is a $e_j'\in E_1'$ such that $e,e_j'$ are neither parallel nor disparate.
It follows that $e,e_j$ are neither parallel nor disparate, and the same for $e',e_j$.
This contradicts our inductive hypothesis which assumes that $C_1^{(i-1)},...,C_{i-1}^{(i-1)},C_n^{(i-1)}$ is balanced.

Next, we let $$E_1'\to E_1''\qquad C_n^{(i-1)}\to C_n^{(i)}$$ and
$$C_i^{(i-1)}\to C_i^{(i)}\qquad C_1^{(i-1)},...,C_{n-1}^{(i-1)}\to C_1^{(i)},...,C_{n-1}^{(i)}$$
be the respective common amplifications.

{\bf Claim 5}: $C_j^{(i-1)}=C_j^{(i)}$ for each $1\leq j\leq i-1$.

This is an immediate consequence of Claim $4$ and the definition of a common amplification.

{\bf Claim 6}: The system $C_1^{(i)},...,C_i^{(i)},C_n^{(i)}$ is balanced. 

We already know that the systems $$C_i^{(i)},C_n^{(i)}\qquad C_1^{(i)},...,C_{n-1}^{(i)}$$ are balanced.
So if there are a pair of $1$-clusters $e,e'$ in $$C_1^{(i)},...,C_i^{(i)},C_n^{(i)}$$ that are neither parallel nor disparate,
it must be that $$e\in \bigcup_{1\leq j\leq i-1}C_j^{(i)}\qquad e'\in C_n^{(i)}$$
From Claims $3,5$ we know that $e'$ is the result of an amplification of a $1$-cluster that is disparate
with each $1$-cluster in $\bigcup_{1\leq j\leq i-1}C_j^{(i)}$. Hence this cannot be the case.

This concludes the induction.
\end{proof}

\subsection{Balanced systems and nonpositive curvature}

In this subsection we show that given a balanced system $C_1,...,C_n$, there is 
a subcomplex $\mathcal{C}$ of $X$ such that the following holds:
\begin{enumerate}
\item $\bigcup_{1\leq i\leq n}\bar{C}_i$ is a subcomplex of $\mathcal{C}$.
\item $\mathcal{C}$ is homeomorphic to a nonpositively curved cube complex.
\end{enumerate}

The main idea behind this is the following.

\begin{defn}
(Parallel closure) Let $C_1,...,C_n$ be a balanced system of pre-clusters in $X$.
Let $E$ be the set of $1$-clusters in $X$ such that for each $e\in E$, there is an $e'\in \bigcup_{1\leq i\leq n}C_i$
such that $e$ and $e'$ are parallel.
Denote by $\mathcal{C}$ as the union of the following set of clusters in $X$:
$$\{C\text{ is a cluster in }X\mid \text{Each facial $1$-subcluster of }C\text{ is an element of }E\}$$
We call $\mathcal{C}$ the \emph{parallel closure} of $C_1,...,C_n$.
\end{defn}

\begin{prop}\label{npccc}
Let $\mathcal{C}$ be the parallel closure of a balanced system $C_1,...,C_n$.
Then the following holds:
\begin{enumerate}
\item $\bigcup_{1\leq i\leq n}\bar{C}_i\subset \mathcal{C}$. 
\item $\mathcal{C}$ is homeomorphic to a nonpositively curved cube complex.
\end{enumerate}
\end{prop}

\begin{proof}
Each facial $1$-subcluster of $\bar{C}_i$ is parallel to a $1$-cluster in $C_i$.
So the first claim follows immediately from the definition of $\mathcal{C}$.

We shall prove the second claim in two parts.
First we demonstrate that $\mathcal{C}$ is homeomorphic to a cube complex which is locally finite and finite dimensional.
Then we shall demonstrate that this is nonpositively curved in the sense of Gromov.
In the first part, we show that the intersection of each pair of clusters in $\mathcal{C}$ is a facial subcluster of both.

First observe that if $e_1$ and $e_2$ are facial $1$-subclusters of clusters $C_1$ and $C_2$, respectively, in $\mathcal{C}$,
then $e_1,e_2$ are parallel or disparate.
This follows immediately from the definition of $\mathcal{C}$.

Assume by way of contradiction that there are two clusters $D_1,D_2$ in $\mathcal{C}$
such that $D_1\cap D_2$ is a diagonal subcluster of $D_1$.

Now we find descriptions satisfying:
\begin{enumerate}
\item $D_1$ is described with base $F\tau$ and parameters $\psi_1,...,\psi_n$.
\item $D_2$ is described with base $F\tau$ and parameters $\eta_1,...,\eta_m$.
\item There is a facial $1$-subcluster $e$ of $D_1\cap D_2$, incident to $F\tau$, which is a diagonal subcluster of $D_1$.
\end{enumerate}

By $(3)$ above there are sets $$X\subseteq \{1,...,n\}, |X|>1\qquad Y\subseteq \{1,...,m\},|Y|>0$$ satisfying: 
$$e=F\tau,F\psi_X\tau=F\tau,F\eta_Y\tau $$

{\bf Claim}: Each pair of $1$-clusters
$$F\tau,F\psi_i\tau\qquad F\tau,F\eta_j\tau$$
is distinct for any $i\in X, j\in Y$.

By way of contradiction, assume that $$e'=F\tau,F\psi_i\tau= F\tau,F\eta_j\tau$$ for some $i\in X,j\in Y$.
This implies that $e'$ is a facial $1$-subcluster of $D_1\cap D_2$ since it is a facial $1$-subcluster of both $D_1$ and $D_2$.
This contradicts our hypothesis that $e$ is a facial $1$-subcluster of the intersection 
$D_1\cap D_2$ since $e,e'$ cannot be orthogonal.

Since  $F\psi_X=F\eta_Y$, and since both $\psi_X,\eta_Y$ are products of independent special forms, there is a $Y$-word $y_{s_1}^{t_1}...y_{s_n}^{t_n}$ which can be obtained from both $\eta_Y,\psi_X$ using expansion moves.
This means that there are $j\in X, k\in Y$ such that we can perform expansion moves on $\psi_{j},\eta_{k}$ respectively to obtain special forms $\psi_{j}',\eta_{k}'$
satisfying that some $y_{s_i}^{t_i}$ occurs in both $\psi_{j}',\eta_{k}'$.

It follows that we can perform an amplification of the $1$-clusters
$$e_1=F\tau,F\psi_{j}\tau\qquad e_2=F\tau,F\eta_{k}\tau$$
to obtain pre-clusters that both contain the $1$-cluster
$F\tau, Fy_{s_i}^{t_i}\tau$.
This means that $e_1,e_2$ are not disparate.
Note that from the claim above we know that $e_1,e_2$ are distinct and hence not parallel.

But since they are both facial $1$-clusters of clusters in $\mathcal{C}$, they must then be parallel or disparate.
This is a contradiction.
Therefore, our original assumption must be false, and it must be the case that $D_1\cap D_2$ is a facial subcluster of both $D_1$ and $D_2$.

It follows that $\mathcal{C}$ is naturally homeomorphic to a cube complex.
We shall abuse notation and also refer to $\mathcal{C}$ as a cube complex itself.
For the rest of the proof, each $n$-cluster in $\mathcal{C}$ is assumed to be replaced by a regular Euclidean $n$-cube,
facial subclusters are declared as subcubes, and the facial intersections provide the gluing maps.

We now observe that $\mathcal{C}$ is locally finite and finite dimensional.
Consider a $0$-cell $u$ in $\mathcal{C}$.
Let $E_u$ be the set of closed $1$-cells incident to $u$ in $C$.
Each closed $1$-cell in $E_u$, by definition, corresponds to a $1$-cluster in $X$ which is parallel to a $1$-cluster
in $\bigcup_{1\leq i\leq n} C_i$.
We abuse notation and also denote this set of $1$-clusters as $E_u$.

By Lemma \ref{samenodeparallel}, any two distinct $1$-clusters in $E_u$ lie in distinct ``parallel equivalence classes".
Since the set $\bigcup_{1\leq i\leq n} C_i$ is finite, the number of ``parallel equivalence classes" determined by them is finite,
and hence $E_u$ is finite.

This observation implies that the cube complex $\mathcal{C}$ is locally finite, and the largest dimension of a cube
in $\mathcal{C}$ is bounded above by $\sum_{1\leq i\leq n} |C_i|$.

Next, we demonstrate that $\mathcal{C}$ is nonpositively curved.
It is easy to see that the link of each $0$-cell in $\mathcal{C}$ is a simplicial complex.
We will show that the link of each $0$-cell in $\mathcal{C}$ does not contain empty simplices, and hence is a flag simplicial complex.
To see this, let $e_1,...,e_n$ be closed $1$-cells in $\mathcal{C}$ incident to a $0$-cell $u$
with the property that each pair $e_i,e_j$ comprises of $1$-faces of a square incident to $u$.

The closed $1$-cells $e_1,...,e_n$ correspond to $1$-clusters $e_1',...,e_n'$
in $X$ with the property that each pair $e_i',e_j'$ are facial $1$-subclusters of a $2$-cluster.
In particular, $e_1',...,e_n'$ are pairwise orthogonal, and hence by Lemma \ref{orthlem} there is an $n$-cluster $D$
in $X$ which contains $e_1',...,e_n'$ as facial $1$-subclusters.

Each $1$-cluster $e_1',...,e_n'$ is parallel to some $1$-cluster in $\bigcup_{1\leq i\leq n} C_i$.
And each facial $1$-subcluster of $D$ is parallel to some $e_i'$.
So it follows that $D$ corresponds to a cube $D'$ in $\mathcal{C}$, so that the $1$-faces of $D'$ incident to $u$ are precisely $e_1,...,e_n$.
Therefore, the link at the vertex $u$ in $\mathcal{C}$ contains an $n$-simplex whose vertices correspond to 
 $e_1,...,e_n$.
\end{proof}

Now we conclude the proof of asphericity.

\subsection{Proof of asphericity}

Any image of a sphere $S^n$ in $X$ under a continuous map is contained in some finite subcomplex $Y$.
Since $X$ is defined as a union of clusters, $Y$ itself is contained in a finite union of clusters.
In particular, we find a finite list of pre-clusters $C_1,...,C_n$ such that $Y\subset \bigcup_{1\leq i\leq n}\bar{C}_i$.
Using Propositions \ref{prebalancing} and \ref{mainpropasp}, we obtain a balanced system $C_1',...,C_n'$ such that 
$$\bigcup_{1\leq i\leq n} \bar{C}_i\subseteq \bigcup_{1\leq i\leq n}\bar{C}_i'$$
Using Proposition \ref{npccc}, we conclude that the parallel closure $\mathcal{C}$ of $C_1',...,C_n'$ satisfies that:
\begin{enumerate}
\item $Y\subseteq \bigcup_{1\leq i\leq n} \bar{C}_i\subseteq \bigcup_{1\leq i\leq n}\bar{C}_i'\subseteq \mathcal{C}$.
\item $\mathcal{C}$ is homeomorphic to a nonpositively curved cube complex, hence is aspherical.
\end{enumerate}
It follows that the image of the sphere is nullhomotopic in $\mathcal{C}$ and hence in $X$.
Therefore, $X$ is aspherical.

\section{A question}

The following question, suggested to the author by Mikhael Gromov, could provide an interesting direction for future research.

\begin{question}
Is there a group $G$ satisfying the following?
\begin{enumerate}
\item $G$ is of type $\mathbf{F}$.
\item $G$ is nonamenable.
\item $G$ does not contain nonabelian free subgroups.
\end{enumerate}
\end{question}

Recall that a group is of type $\mathbf{F}$ if it admits a finite Eilenberg-Mclane complex.
Examples of such groups include finitely generated free abelian groups, torsion free hyperbolic groups (including finitely generated free groups), torsion free subgroups of finite index in 
finitely generated Coxeter groups, torsion free subgroups of finite index in arithmetic groups, 
and torsion free subgroups of finite index in the outer automorphism group of a finitely generated free group.
(See \cite{Geoghegan} for further details.)
None of the above examples can be sources of such a group. 

The group studied in this article is not of type $\mathbf{F}$, since it contains subgroups isomorphic to $\mathbb{Z}^{\infty}$.
Interesting subgroups of the group of piecewise projective homeomorphisms of the real line seem to always contain $\mathbb{Z}^{\infty}$ subgroups,
hence may not provide such an example.
This includes the case of Thompson's group $F$, for instance.
Containing torsion elements is also an obstruction for type $\mathbf{F}$. 
Hence constructions that involve nonamenable Burnside groups, torsion Tarski monsters, Golod-Shafarevich groups as well
as the finitely presentable nonamenable group of Olshanskii-Sapir cannot provide a source of such a group.

\end{document}